\tikzset{>=latex}
\theoremstyle{definition}
\newtheorem{thm}{Theorem}
\numberwithin{thm}{section}
\newtheorem{lemma}[thm]{Lemma}
\newtheorem{defn}[thm]{Definition}     
\newtheorem{prop}[thm]{Proposition}
\newtheorem{construction}[thm]{Construction}
\newtheorem{rem}[thm]{Remark}
\newtheorem{notation}[thm]{Notation}
\newtheorem{ex}[thm]{Example}
\newtheorem{cor}[thm]{Corollary}
\numberwithin{equation}{section}
\newcommand{\cut}{{\sf c}}
\newcommand{\hc}{{\sf hc}}
\newcommand{\ssc}{{\sf \tilde{c}}}
\newcommand{\ctop}{{\sf c_{Top}}}
\newcommand{\sctop}{{\sf \tilde{c}_{Top}}}
\newcommand{\res}{{\sf res}}
\newcommand{\hstb}{{\sf HSTB}}
\newcommand{\stc}{{\sf STC}}
\newcommand{\cstb}{{\sf CSTB}}
\newcommand{\stss}{{\sf STSS}}
\newcommand{\open}{{\sf Open}}
\newcommand{\set}{{\sf Sets}}
\newcommand{\gpoid}{{\sf Groupoids}}
\newcommand{\stm}{{\sf STM}}
\newcommand{\stb}{{\sf STB}}
\newcommand{\BG}{{\sf BG}}
\newcommand{\psts}{{\sf PartSymp}}
\newcommand{\reg}[1]{{#1}_{\sf reg}}
\newcommand{\op}[1]{{#1}^{\sf op}}
\newcommand{\R}{\mathbb{R}}
\newcommand{\C}{\mathbb{C}}
\newcommand{\Z}{\mathbb{Z}}
\newcommand{\T}{\mathbb{T}}
\newcommand{\fg}{\mathfrak{g}}
\newcommand{\fk}{\mathfrak{k}}
\newcommand{\Cc}{\mathcal{C}}
\newcommand{\chor}{{\sf c_{hor}}}
\newcommand{\chern}{{\sf c_1}}
\newcommand{\shor}{{\sf \tilde{c}_{hor}}}
\definecolor{darkgreen}{rgb}{0,0.6,0.2}
\title{Symplectic toric stratified spaces with isolated singularities}
\author{Seth Wolbert}
\begin{document}
\begin{abstract}The goal of this paper is to classify symplectic toric stratified spaces with isolated singularities. This extends a result of Burns, Guillemin, and Lerman which carries out this classification in the compact connected case. In making this classification, it is necessary to classify symplectic toric cones. Via a well-known equivalence between symplectic toric cones and contact toric manifolds, this allows for the classification of contact toric manifolds as well, extending Lerman’s classification of compact connected contact toric manifolds.\end{abstract}
\maketitle
\tableofcontents
%%%%%%%%%%%%%%%%%%%%%%%%%%%%%%%%%%%%%%%%%
\section{Introduction}
%%%%%%%%%%%%%%%%%%%%%%%%%%%%%%%%%%%%%%%%%
In 1988, Delzant classified compact connected symplectic toric manifolds by the images of their moment maps \cite{Delzant}.  This classification was extended by Karshon and Lerman to non-compact symplectic toric manifolds \cite{KarshonLerman}, joining a recent trend of classification of toric symplectic (or, better said, ``symplectic-like'') structures.  Research in this area
has been dominated by two separate pursuits: examining weakened symplectic structures (see origami/folded symplectic manifolds \cite{daSilvaGuilleminPires}/\cite{Hockensmith} and b-symplectic/log symplectic manifolds \cite{GuilleminMirandaPires}/\cite{GualtieriLiPelayoRatiu}) or weakened versions of manifolds (see symplectic toric orbifolds \cite{LermanTolman} or compact symplectic toric stratified spaces \cite{BurnsGuilleminLerman}).  The goal of this paper, the classification of symplectic toric stratified spaces with isolated singularities, follows the latter trend.

The importance of stratified spaces in symplectic geometry arises from the symplectic reduction of Marsden and Weinstein \cite{MarsdenWeinstein}; and Meyer \cite{Meyer}.   In 1991, Sjamaar and Lerman \cite{SjamaarLerman} showed that, in general, symplectic reduction results in a stratified space and furthermore that each stratum inherits a symplectic form from the original manifold. In 2005, Burns, Guillemin, and Lerman \cite{BurnsGuilleminLerman} defined symplectic toric stratified spaces with isolated singularities and classified these in the compact connected case using the images of their moment maps.

The foundations for Delzant's classification are the convexity and connectedness theorems of Atiyah \cite{Atiyah}; and Guillemin and Sternberg \cite{GuilleminSternberg}.  This is emulated by Burns, Guillemin, and Lerman who use a similar convexity and connectedness theorem for compact stratified spaces with isolated singularities.  The issue with the non-compact version of either case is that the image of the moment map no longer needs to be convex and its fibers need not be connected.  

Karshon and Lerman's solution to this problem in the case of a symplectic toric manifold $(M,\omega, \mu)$ is to substitute for the moment map image {\em the orbital moment map}: the unique map $\bar{\mu}$ from the quotient of $M$ to the Lie algebra dual through which $\mu$ factors. This extra information supplements the loss of connected fibers. As the quotient of $M$ by the torus action needn't be contractible, multiple isomorphism classes may be associated to each orbital moment map and these classes are quantified by cohomology classes of the quotient of $M$. Our classification will follow this approach.

Fix $G$ a torus and let $\fg$ denote its Lie algebra.   A {\em symplectic toric stratified space with isolated singularities} $(X,\omega, \mu:X\to \fg^*)$ is (roughly) defined as a symplectic toric manifold with isolated singularities whose deleted neighborhoods are modeled on symplectic toric cones (see Definition \ref{d:symptorstratspace}).   Here, $X$ is the full space, $\omega$ is a symplectic form on $\reg{X}$ (the open, dense manifold that is the top stratum of $X$), and $\mu$ is a continuous function such that $\mu|_{\reg{X}}$ is a moment map for the action of $G$ on $(\reg{X},\omega)$.

By identifying the orbital moment maps of symplectic toric stratified spaces with isolated singularities as a type of map we call {\em stratified unimodular local embeddings}, we show that, in grouping together symplectic toric stratified spaces using these orbital moment map types, we can make the following classification.

\begin{thm}\label{t:maintheorem}
Let $\psi:W\to \fg^*$ be a stratified unimodular local embedding. Then the set of isomorphism classes of symplectic toric stratified spaces with isolated singularities $(X,\omega,\mu:X\to \fg^*)$ with $G$-quotient map $\pi:X\to W$ and orbital moment map $\psi$ is naturally isomorphic to a subspace $\Cc$ of the cohomology classes $H^2(\reg{W},\Z_G\times \R)$.  Here, $\Z_G$ denotes the integral lattice of $G$ (the kernel of $\exp:\fg \to G$) and $\reg{W}$ denotes the top stratum of $W$.  

In particular:
\begin{itemize}
\item If $\dim(G)\neq 3$, $\Cc = H^2(\reg{W},\Z_G\times \R).$
\item If $\dim(G)=3$, $\Cc$ is an extension of $H^2(\reg{W},\Z_G)$ by $H^2(W,\R)$. 
\end{itemize}
\end{thm}

Indeed, we will see that, to each principal $G$-bundle $\pi:P\to \reg{W}$ we may associate a collection of classes of symplectic toric stratified spaces over $\psi$ a collection of classes in $H^2(\reg{W},\R)$ satisfying a certain local property determined by $\chern(P)$, the Chern class of $P$.  In turn, this collection of classes is non-canonical bijection with $H^2(W,\R)$.  This classification extends the classification of compact connected symplectic toric stratified spaces by Burns, Guillemin, and Lerman \cite{BurnsGuilleminLerman} not only by dropping the compact and connectedness conditions but also by dropping several technical conditions (see Remark \ref{r:classificationdistinctions}).

To obtain this classification, we will find it necessary to completely understand symplectic toric cones. Recall that a symplectic toric manifold $(M,\omega, \mu:M\to \fg^*)$ is a {\em symplectic toric cone} if $M$ has a free and proper action of $\R$ commuting with the action of $G$ and, for $\lambda \in \R$ with corresponding action diffeomorphism $\rho_\lambda:M\to M$, we have $\rho_\lambda^*\omega =e^\lambda \omega$.  Additionally, we impose that the moment map $\mu$ for $M$ is the homogeneous moment map for $(M,\omega)$: that which satisfies $\mu(t\cdot p) = e^t\mu(p)$ for every $t\in \R$ and $p\in M$ (such a moment map for $(M,\omega)$ always exists).

As in the case of symplectic toric stratified spaces, the orbital moment maps of symplectic toric cones must take a certain form: that of a {\em homogeneous unimodular local embedding}. Grouping symplectic toric cones by orbital moment map allows us to make the following classification.

\begin{thm}\label{t:stcclassify}
 Let $\psi:W\to \fg^*$ be a homogeneous unimodular local embedding. Then the set of isomorphism classes of symplectic toric cones $(M,\omega,\mu)$ with $G$-quotient $\pi:M\to W$ and orbital moment map $\psi$ is in natural bijective correspondence with the cohomology classes $H^2(W,\Z_G)$, where $\Z_G$ is the integral lattice of $G$, the kernel of the map $\exp:\fg \to G$.
\end{thm}

As symplectic toric cones and contact toric manifolds are intimately related (indeed, they form equivalent categories), the classification of Theorem \ref{t:stcclassify} extends a classification of Lerman \cite{LermanCTM} in the case of compact connected contact toric manifolds.

\begin{cor}\label{c:ctm}
Let $\psi:W\to \fg^*$ be a homogeneous unimodular local embedding. Then the set of isomorphism classes of contact toric manifolds with symplectization $(M,\omega,\mu)$ having $G$-quotient $\pi:M \to W$ and orbital moment map $\psi$ is in natural bijective correspondence with the cohomology classes $H^2(W,\Z_G)$.
\end{cor}

The paper is organized as follows. In Section \ref{s:KL}, we give a brief review of the classification result of Karshon and Lerman \cite{KarshonLerman}. This will serve as a model of the techniques the rest of this paper will use as well as a repository for the results from this classification we will be adapting. The remainder of the paper is split into two main parts: Part \ref{part:stc}, which deals with the classification of symplectic toric cones and Part \ref{part:stss}, which deals with the classification of symplectic toric stratified spaces with isolated singularities. Each part begins with its own introduction and organizational description. Finally, there is a two part appendix, one part dealing with the basics of symplectic cones and contact manifolds and the other dealing with stacks.
\\

\noindent{\bf Notation and Conventions:}  Manifolds are assumed to be finite dimensional, paracompact, and Hausdorff.  $G$ will always denote a torus (a compact connected commutative finite dimensional Lie group) and $\fg$ will always denote its Lie algebra. $\Z_G$ will always be used to denote the integral lattice of $\fg$; that is, the lattice $\ker(\exp : \fg \to G)$.  The notation $\langle \cdot,\cdot \rangle$ will denote the canonical pairing $\fg^*\times \fg \to \R$.

In using manifolds with corners, we will follow Karshon and Lerman and use the convention of Joyce (see \cite{Joyce}). An {\em $n$-dimensional manifold with corners} is a paracompact Hausdorff topological spaces with a maximal atlas of charts to {\em sectors of $\R^n$} (open subsets of subsets of $\R^n$ of the form $[0,\infty)^k\times \R^{n-k}$).  The {\em index of a point $x$} in a manifold with corners is the integer $k$ for which there exists a chart $\varphi$ to a sector  $[0,\infty)^k\times \R^{n-k}$ with $\varphi(x)=0$ (note: index is called {\em depth} in \cite{Joyce}).  The set of all points $x$ in a manifold with corners $X$ with index $k$ is denoted $S^k(X)$ and each $S^k(X)$ is naturally a manifold without boundary. Smooth maps of manifolds with corners are defined in the same manner as smooth maps of manifolds are defined: continuous maps that locally factor through smooth maps on charts. In particular, this means that we are expressly {\em not} thinking of maps of manifolds with corners as maps of stratified spaces. Indeed, our definition of smooth maps of manifolds with corners allows for the image of any stratum of the source manifold with corners to be contained in the union of multiple strata of the target.

As explained in Appendix A of \cite{KarshonLerman}, de Rham cohomology is well-defined for manifolds with corners, is invariant under smooth homotopy and thus, in particular, satisfies the Poincar\'{e} Lemma.  For a manifold with corners $W$, $H^k(W,\R)$ will denote the degree $k$ de Rham cohomology. In Part \ref{part:stss}, it will be necessary to use the singular cohomology of a stratified space $X$ with coefficients in $\R$; this will also be denoted by $H^k(X,\R)$.  Since partitions of unity exist for manifolds with corners (one may restrict restrict bump functions for $\R^n$ to bump functions on sectors), one may conclude via the line of argument in Chapter 5, Section 9 of \cite{BredondR} that any manifold with corners $W$ admits a de Rham isomorphism relating smooth de Rham forms and singular forms with coefficients in $\R$.  

For a manifold with corners $W$, we denote by $\mathring{W}$ the open dense interior of $W$ (i.e., the index $0$ elements of $W$). There always exists a manifold without corners $\bar{W}$ into which a manifold with corners $W$ embeds; in this case, it is said that $\bar{W}$ {\em contains $W$ as a domain}. Given two maps $f : M \to N$ and $g : M'\to N$, the symbol $M\times_N M'$ will denote the fiber product of $M$ with $M'$ over $N$. In the case where we wish to emphasize the maps $f$ and $g$,
we may write $M\times_{f,N,g}M'$. 

For any topological space $X$, $\open(X)$ will always denote the category of open subsets of $X$ with morphisms inclusions of subsets. The symbols $\set$ and $\gpoid$ will denote the
categories of sets and (small) groupoids. By {\em presheaf of groupoids}, we will mean a {\em strict} presheaf of groupoids with domain a (full subcategory of) the category of open subsets on
some topological space; in other words, a (1-)functor $\mathcal{F} : \op{\open(X)} \to \gpoid$. To avoid unnecessary generality involving sites and categories fibered in groupoids, we will take {\em stack} to mean such a presheaf of groupoids satisfying the standard descent conditions (see Definition \ref{d:stack}).
\\

\noindent {\bf Glossary of presheaves:} Here is a comprehensive list of the various presheaves appearing in this paper with brief descriptions and references to their definitions:
\begin{itemize}
\item $\stm_\psi$ denotes {\em the stack of symplectic toric manifolds over a unimodular local embedding $\psi$} (see Definition \ref{d:stm} and Remark \ref{r:stmastack}).
\item $\stb_\psi$ denotes {\em the stack of symplectic toric bundles over a unimodular local embedding $\psi$} (see Definition \ref{d:stb}  and Remark \ref{r:stmastack}).
\item $\stc_\psi$ denotes {\em the stack of symplectic toric cones over a homogeneous unimodular local embedding $\psi$} (see Definition \ref{d:stc} and Proposition \ref{p:stcapsheaf}).
\item $\hstb_\psi$ denotes {\em the stack of homogeneous symplectic toric bundles over a homogeneous unimodular local embedding $\psi$} (see Definition \ref{d:hstb}; and Propositions \ref{p:hstbapresheaf} and \ref{p:hstbastack}).
\item $\BG$ denotes {\em the stack of principal $G$-bundles} over a fixed topological space (see \ref{ex:bg}). 
\item $\BG_{\R}$ denotes {\em the stack of homogeneous principal $G$-bundles over $W$} for $W$ a topological space with a free and proper $\R$ action (see Definition \ref{d:bgr}).
\item $\stss_\psi$ denotes {\em the stack of symplectic toric stratified spaces with isolated singularities over a stratified unimodular local embedding $\psi$} (see Definition \ref{d:stss} and Remark \ref{r:stssapsheaf}).
\item $\cstb_\psi$  denotes {\em the stack of conical symplectic toric bundles over a stratified unimodular local embedding $\psi$} (see Definition \ref{d:cstb}, Remark \ref{r:cstbapsheaf}, and Proposition \ref{p:cstbastack}). 
\item $\psts_\psi$ denotes {\em the presheaf of partitioned symplectic toric spaces over $\psi$ over a stratified unimodular local embedding $\psi$}; see Definition \ref{d:psts} and Remark \ref{r:pstspullbacks}).\\
\end{itemize}

\noindent{\bf Acknowledgments:} The author would like to thank Eugene Lerman for introducing the problem this paper addresses as well as for his constant support during the writing process.
The author would also like to thank the referee for their careful reading, detailed notes, and suggestions on multiple drafts of this paper which helped to greatly improved this work from its original version. This work was supported in part by a gift to the Mathematics Department at the University of Illinois from Gene H. Golub.

%%%%%%%%%%%%%%%%%%%%%%%%%%%%%%%%%%%%%%%%%
\section{Symplectic toric manifolds}\label{s:KL}
%%%%%%%%%%%%%%%%%%%%%%%%%%%%%%%%%%%%%%%%%

What follows is a review of the recently published classification of non-compact symplectic toric manifolds by Karshon and Lerman \cite{KarshonLerman}. It is by no means a complete account; the aim is to give a rough outline of their classification. This section also serves as a convenient repository of relevant ideas we will later be citing and adapting. Those familiar with the result of Karshon and Lerman may safely skip this section.

Fix a torus $G$ with Lie algebra $\fg$. Recall that a {\em symplectic toric manifold} $(M,\omega,\mu:M\to \fg^*)$ is a symplectic manifold $(M,\omega)$ with an effective Hamiltonian action of $G$ and a moment map $\mu:M\to \fg^*$ satisfying $2\dim(G) = \dim(M)$. As discussed in the introduction, Karshon and Lerman replace Delzant polytopes (used to classify compact connected symplectic toric manifolds via moment map images) with orbital moment maps:

\begin{defn}\label{d:gquot}
Given a symplectic toric manifold $(M,\omega,\mu:M\to \fg^*)$ and $W$ a manifold with corners, a {\sf $G$-quotient map} $\pi:M\to W$ is a smooth map such that, for any $G$-invariant map $f : M \to N$, there exists a unique smooth map $\bar{f}:W\to N$ such that $f=\bar{f}\circ \pi$.

Given a symplectic toric manifold $(M,\omega,\mu:M\to \fg^*)$ and a $G$-quotient map $\pi:M\to W$, the unique map $\bar{\mu}:W\to \fg^*$ for which $\mu=\bar{\mu}\circ \pi$ is called {\sf the orbital moment map}. 
\end{defn}

It is of course not obvious that such a $G$-quotient map (i.e., to a manifold with corners) exists for any symplectic toric manifold.  However, one may show that the canonical choice $M/G$ for a $G$-quotient of a symplectic toric manifold $(M,\omega,\mu:M\to \fg^*)$ is, in fact, a manifold with corners:

\begin{prop}
For any symplectic toric manifold $(M,\omega,\mu:M\to \fg^*)$, the $G$-quotient $\pi:M\to M/G$ is a $G$-quotient in the sense of Definition \ref{d:gquot}.  In particular, $M/G$ is a manifold with corners and the corresponding orbital moment map $\bar{\mu}:M/G\to \fg^*$ is smooth.
\end{prop}

So every symplectic toric manifold comes with at least one $G$-quotient; however, it will be convenient (and indeed vital to the existence portion of the classification) to divorce an orbital moment map from the context of the originating manifold $M$ and to work with unimodular local embeddings defined an arbitrary manifolds with corners. These are defined as follows:

\begin{defn}\label{d:ule}
A {\sf unimodular cone with vertex $\epsilon \in \fg^*$} is a subset of the form
\[C_{\{v_1,\ldots,v_k\},\epsilon}:=\{\eta\in \fg^* \,|\,\langle \eta-\epsilon,v_i\rangle\geq 0\text{ for all }1\leq i \leq k\}\]
where $\{v_1,\ldots,v_k\}$ is a basis for an integral lattice of a subtorus of $G$.

For a manifold with corners $W$, a smooth map $\psi:W\to \fg^*$ is a {\sf unimodular local embedding} if, for each $w\in W$, there exists neighborhood $U$ of $w$ in $W$ and $\{v_1,\ldots,v_k\}$ the basis to the integral lattice of a subtorus of $G$ so that $\psi|_U$ is an open embedding of $U$ onto a neighborhood of the vertex of the unimodular cone $C_{\{v_1,\ldots,v_k\},\psi(w)}$ with vertex $\psi(w)$.
\end{defn}

\begin{prop}\label{p:KLorbmmaule}
Given a symplectic toric manifold $(M,\omega,\mu:M\to \fg^*)$ and $G$-quotient $\pi:M\to W$, the orbital moment map $\bar{\mu}:M\to \fg^*$ is a unimodular local embedding.
\end{prop}

Given two symplectic toric manifolds $(M,\omega,\mu)$ and $(M',\omega',\mu')$, if there is a $G$-equivariant symplectomorphism $\varphi:(M,\omega)\to (M',\omega')$ with $\mu=\mu'\circ \varphi$, note that we may choose a common $G$-quotient space $W$ with quotient maps $\pi:M\to W$ and $\pi':M'\to W$ satisfying $\pi=\pi'\circ\varphi$. It follows that $\mu$ and $\mu'$ descend to the same orbital moment map $\bar{\mu}:W\to \fg^*$.  Thus, to understand the collection of all symplectic toric manifolds, it makes sense to group symplectic toric manifolds together by quotient space and orbital moment map.

\begin{defn}\label{d:stm}
Let $\psi:W\to \fg^*$ be a unimodular local embedding. Then a {\sf symplectic toric manifold over $\psi$} is a symplectic toric manifold $(M,\omega,\mu)$ together with a $G$-quotient map $\pi:M\to W$ such that $\mu= \psi \circ \pi$. This data will be expressed as the triple $(M,\omega,\pi:M\to W)$.

{\sf The groupoid of symplectic toric manifolds over $\psi$}, denoted $\stm_\psi(W)$, is the groupoid with
\begin{itemize}
	\item objects: symplectic toric manifolds over $\psi$; and
	\item morphisms: $G$-equivariant symplectomorphisms
	\[f:(M,\omega,\pi:M\to W) \to (M',\omega',\pi':M'\to W)\]
	satisfying $\pi'\circ f = \pi$.
\end{itemize}
\end{defn}

The strategy for actually classifying these spaces is to relate them to a simpler class of objects, namely symplectic toric bundles.

\begin{defn}\label{d:stb}
 Let $\psi:W\to \fg^*$ be a unimodular local embedding. Then a {\sf symplectic toric principal $G$-bundle over $\psi$} is a pair $(\pi:P\to W,\omega)$, for $\pi:P\to W$ a principal $G$-bundle and $\omega$ a $G$-invariant symplectic form with moment map $\psi\circ \pi$. 

 {\sf The groupoid of symplectic toric principal $G$-bundles over $\psi$}, denoted $\stb_\psi(W)$, is the groupoid with
 \begin{itemize}
 	\item objects: symplectic toric principal $G$-bundles over $\psi$; and
 	\item morphisms: $G$-equivariant symplectomorphisms
 	\[\varphi: (\pi:P\to W,\omega) \to (\pi':P'\to W,\omega')\]
 	for which $\pi'\circ\varphi=\pi$.
 \end{itemize}
\end{defn}

\begin{rem}\label{r:stmastack}
For any open subset $U$ of $W$, $\psi|_U:U\to \fg^*$ is also a unimodular local embedding; thus we may define
\[\stm_\psi(U) := \stm_{\psi|_U}(U)\;\;\;\text{ and }\;\;\; \stb_\psi(U) := \stb_{\psi|_U}(U)\]
These collections of groupoids define presheaves of groupoids
\[\stm_\psi:\op{\open(W)} \to \gpoid\;\;\;\text{ and }\;\;\;\stb_\psi:\op{\open(W)}\to \gpoid\]
In fact, these presheaves of groupoids are stacks over $\op{\open(W)}$.
\end{rem}

To establish the equivalence of the groupoids $\stb_\psi(W)$ and $\stm_\psi(W)$, Karshon and Lerman introduce the functor $\cut:\stb_\psi(W)\to\stm_\psi(W)$, constructed with the following
steps:

\begin{enumerate}
	\item For every $w\in W$, $\psi$ determines a basis $\left\{v_1^{(w)},\ldots,v_k^{(w)}\right\}$ of the integral lattice of a subtorus $K_w$ of $G$.  In turn, this basis defines a symplectic representation $\rho:K_w\to \mathrm{Sp}\left(\C^k,\omega_{\C^k}\right)$ for $(V_w,\omega_w):=\left(\C^k,\omega_{\C^k}\right)$ the symplectic vector space with $\omega_{\C^k}= \frac{\sqrt{-1}}{2\pi}\sum dz_j\wedge d\bar{z}_j$ and $\rho$ the toric representation with symplectic weights $\{v_1^*,\ldots,v_k^*\}$, the dual basis to $\left\{v_1^{(w)},\ldots,v_k^{(w)}\right\}$; i.e., we take
	\begin{equation}\label{eq:symprep} \exp(X)\cdot (z_1,\ldots,z_k):= (e^{2\pi\sqrt{-1}\langle v_1^*,X\rangle}\cdot z_1,\ldots,e^{2\pi\sqrt{-1}\langle v_k^*,X\rangle}\cdot z_k)\end{equation}
	\item  For any principal bundle $\pi:P\to W$, let $\sim$ be the equivalence relation
	\[p\sim p'\text{ if and only if there exists }k\in K_{\pi(p)}\text{ with }p\cdot k=p'\]
	on $P$. Then define
	\[ \ctop(\pi:P\to W,\omega):=(P/\sim, \bar{\pi}:P/\sim\,\to W)\]
	for $\bar{\pi}$ the map descending from $\pi$. It follows from the $G$-equivariance of morphisms of $\stb_\psi(W)$ that these morphisms descend to continuous equivariant morphisms between these quotients modulo $\sim$.  It follows that $\ctop$ defines a functor
	\[\ctop:\stb_\psi(W)\to {\sf Top}_G(W)\]
	for ${\sf Top}_G(W)$ {\em the category of topological $G$-spaces over $W$}: spaces with $G$-quotients to $W$ and continuous equivariant morphisms intertwining quotient maps.

	This functor is natural with respect to restriction; that is, for every open $U$ in $W$,
	\[\ctop((P,\omega)|_U)=\ctop(P,\omega)|_U:=(\bar{\pi}^{-1}(U),\bar{\pi})\]
	\item  To ``symplectize'' these topological quotients, Karshon and Lerman use symplectic cuts, showing for each $w\in W$, there is a neighborhood $U_w$ of $w$ in $W$ (defined using the properties of $\psi$) so that
	\[{\sf cut}((P,\omega)|_{U_w}):=(P|_{U_w}\times V_w)//_0\,K_w\]
	is a symplectic toric manifold over $\psi|_{U_w}$. This establishes a functor
	\[{\sf cut}:\stb_\psi(U_w)\to \stm_\psi(U_w)\]
	for each $w\in W$.
	\item For each $w\in W$ and $(P,\omega)\in \stb_\psi(W)$, there is a natural homeomorphism
	\[\alpha^P_w:\ctop\left((P,\omega)|_{U_w}\right)\to {\sf cut}\left((P,\omega)|_{U_w}\right)\]
	intertwining the $G$-quotient maps to $W$ of the source and target. For any $w$, $w'$ in $W$ with $U_w\cap U_{w'}$ non-empty, $\alpha^P_{w'}\circ \left(\alpha_w^P\right)^{-1}$ is a symplectomorphism.
	Therefore, $\ctop(P,\omega)$ inherits the structure of a symplectic toric manifold.
	\item  Finally, for each morphism $\varphi:(P,\omega)\to (P,\omega')$ of $\stb_\psi(W)$ and for any $w\in W$, the diagram
	\[\xymatrix{\ctop(P,\omega)|_{U_w}\ar[r]^{\alpha_w^P}\ar[d]_{\ctop(\varphi)} & {\sf cut}((P,\omega)|_{U_w})\ar[d]^{{\sf cut}(\varphi|_{U_w})} \\ \ctop(P',\omega')|_{U_w} \ar[r]_{\alpha^{P'}_w} & {\sf cut}((P',\omega')|_{U_w}) }\]
	commutes.  Thus, $\ctop(\varphi)$ is a symplectomorphism with respect to the symplectic structure induced by the morphisms $\alpha^P_w$.
\end{enumerate}

As it will be important later, we present below an outline of the process used to ``symplectize'' the quotient space $\ctop(P,\omega)$.  First, an important theorem about extending Marsden-
Weinstein and Meyer reduction to a specific scenario involving manifolds with corners is required.

\begin{thm}[Theorem 2.23, \cite{KarshonLerman}]\label{t:reduction}
Suppose $(M,\sigma)$ is a symplectic manifold with corners with a proper Hamiltonian action of a Lie group $K$ and moment map $\Phi: M \to \fk^*$ (for $\fk$ the Lie algebra of $K$). Suppose also that
\begin{itemize}
	\item for each $x\in \Phi^{-1}(0)$, the stabilizer $K_x$ of $x$ is trivial;
	\item $\Phi$ admits an extension $\tilde{\Phi}$ to a manifold without corners $\tilde{M}$ containing $M$ as a domain; and
	\item $\tilde{\Phi}^{-1}(0)=\Phi^{-1}(0)$.
\end{itemize}
Then $\Phi^{-1}(0)$ is a manifold without corners and the symplectic reduction at $0$
\[M//_0\,K:=\Phi^{-1}(0)/K\]
is a symplectic manifold via standard Marsden-Weinstein/Meyer symplectic reduction.
\end{thm}

\begin{rem}
Note that $\tilde{M}$ and $\tilde{\Phi}$ in the statement of Theorem \ref{t:reduction} serve only to prove that $\Phi^{-1}(0)$ is a manifold without corners.  Once this has been accomplished, the usual Marsden-Weinstein and Meyer reduction procedure is then just applied to $\Phi^{-1}(0)$ and $\tilde{M}$ and $\tilde{\Phi}$ have no direct impact on the resulting reduction.
\end{rem}

We now construct ${\sf cut}((P,\omega)|_{U_w})$ for a valid choice of $U_w$.

\begin{construction}\label{const:cuts}
 Fix a symplectic toric bundle $(\pi:P\to W,\omega)$ over unimodular local embedding $\psi:W\to \fg^*$.  	

 As stated in step (1) above, because $\psi$ is a unimodular local embedding, there exists a unimodular cone $C_w:=C_{\{v_1,\ldots,v_k\},\psi(w)}$ with vertex $\psi(w)$ so that $\psi$ embeds a neighborhood of $w$ in $W$ onto a neighborhood of the vertex $\psi(w)$ in the cone. Here, $\{v_1,\ldots, v_k\}$ is the basis for the Lie algebra $\fk$ of a subtorus $K_w\leq G$ which, in turn, defines a symplectic toric $K_w$ representation $\rho_w:K_w\to \mathrm{Sp}\left(\C^k,\omega_{\C^k}\right)$ with $\omega_{\C^k}=\frac{\sqrt{-1}}{2\pi}\sum z_j\wedge \bar{z}_j$ and with symplectic weights $\{v_1^*,\ldots,v_k^*\}$ (for $\{v_1^*,\ldots,v_k^*\}$ the dual basis to $\{v_1,\ldots,v_k\}$) yielding the action given in equation \eqref{eq:symprep}.  This representation has moment map
 \[\mu_w:\C^k\to \fk^*,\;\;(z_1,\ldots,z_k)\mapsto -\sum_{j=1}^k |z_j|^2v_j^*.\]

Let $\iota:\fk\to \fg$ be the embedding of $\fk$ into $\fg$ and let $\iota^*$ be the dual to this embedding. Then, since $\psi\circ \pi$ is the moment map for the free action of $G$ on $P$, $\nu:=\iota^*\circ \psi\circ \pi:P\to \fk^*$  is the moment map for the action of $K_w$ on $P$. Define $\xi_0:=\iota^*(\psi(w))$. Then for $C'_w:= C_{\{v_1,\ldots,v_k\},\xi_0}$ the unimodular cone with vertex $\xi_0$ in $\fk^*$ and $\fk^o$ the annihilator of $\fk$ in $\fg^*$, we can identify $C_w$ with the product $C_w'\times \fk^o$. Explicitly, we are splitting $C_w$ into the product of $C_w'$, a cone in $\fk^*$ containing no affine subspaces, and the affine space $\psi(w)+\fk^o$.  This affine space corresponds (at least near $w$) to the image of the open face of $W$ containing $w$.

Thus, there exist contractible neighborhoods $\mathcal{U}$ of $w$ in the open face of $W$ containing $w$ and $\mathcal{V}$ of $\xi_0$ in $\fk^*$ so that, for $\mathcal{V}':=C_w'\cap \mathcal{V}$, there exists $U_w$ a neighborhood of $w$ in $W$ which is diffeomorphic to $\mathcal{U}\times \mathcal{V}'$.  Then $\nu|_{U_w} : P|_{U_w} \to \mathcal{V}'$ is a trivializable $\mathcal{U}\times G$ fiber bundle. Therefore, $P|_{U_w}$ is contained in a manifold $\tilde{P}$ (diffeomorphic to $\mathcal{V}\times \mathcal{U}\times G$) as a domain and $\nu|_{U_w}:P|_{U_w} \to \mathcal{V}'$ admits a smooth extension to a map $\tilde{\nu}:\tilde{P}\to \mathcal{V}$.

Define $\Phi:P|_{U_w}\times\C^k\to \fk^*$ by
\[\Phi(p,z):=\nu(p)-\xi_0+\mu_w(z)\]
Then $\Phi$ is a moment map for the action of $K_w$ on $P|_{U_w}\times \C^k$ and admits an extension to the map
\[\tilde{\Phi}(p,z):=\tilde{\nu}(p)-\xi_0+\mu_w(z)\]
satisfying the conditions of Theorem \ref{t:reduction}.  Thus, reduction at the $0$ level set of $\Phi$ yields a symplectic manifold (without corners). One may check that $(P|_{U_w}\times \C^k)//_0\,K_w$ inherits a $G$-quotient map $\bar{\pi}$ to $U_w$ with respect to which $((P|_{U_w}\times \C^k)//_0\,K_w,\bar{\pi})$ is a symplectic toric manifold of $\stm_\psi(U_w)$.  Define ${\sf cut}((P,\omega)|_{U_w}):=((P|_{U_w}\times \C^k)//_0\,K_w,\bar{\pi})$.

For $\varphi:(P,\omega)\to (P',\omega')$, the morphism $\varphi\times\mathrm{id}_{\C^k}:P|_{U_w}\times\C^k\to P'|_{U_w}\times \C^k$ descends to a symplectomorphism ${\sf cut}(\varphi): {\sf cut}((P,\omega)|_{U_w})\to {\sf cut}((P',\omega')|_{U_w})$.
\end{construction}

\begin{rem}\label{r:shiftedmomentmap}
Given a unimodular local embedding $\psi:W\to \fg^*$ and a Lie algebra dual element $\eta\in \fg^*$, let $\psi'$ be the map $\psi'(w):=\psi(w)+\eta$.  Then $\psi'$ is also a unimodular local
embedding. Let $(\pi:P\to W,\omega)$ be a symplectic toric bundle over $\psi$; then it is a symplectic toric bundle over $\psi'$ as well.

For $C_w$ and $C'_w$ the unimodular cones with vertices $\psi(w)$ and $\psi'(w)$ respectively onto which $\psi$ and $\psi'$ are local embeddings near $w$, note we have $C_w+\eta = C'_w$ and therefore $\psi$ and $\psi'$ determine the same subtorus $K_w\leq G$. It follows that $\ctop(P,\omega)$ is the same topological $G$-space when regarding $(P,\omega)$ as a symplectic toric bundle over $\psi$ or $\psi'$.  Following Construction \ref{const:cuts}, since the cutting procedures with respect to $\psi$ or $\psi'$ are performed relative to the cone vertices $\psi(w)$ and $\psi'(w)$, $\ctop(P,\omega)$ is furthermore symplectized the same way with respect to either unimodular local embedding.

Therefore, the symplectic toric manifolds $\cut(P,\omega)\in \stm_\psi(W)$ and $\cut(P,\omega)\in\stm_{\psi'}(W)$ are symplectomorphic. Of course, this symplectomorphism does not intertwine the respective moment maps.
\end{rem}

For the purposes of this paper, it will also be important to sketch the construction of the homeomorphisms $\alpha^P_w:\ctop(P,\omega)|_{U_w}\to {\sf cut}((P,\omega)|_{U_w})$.

\begin{construction}\label{const:transitions}
For each $w\in W$ and $U_w$ defined as in Construction \ref{const:cuts}, to define the homeomorphisms $\alpha^P_w:\ctop(P,\omega)|_{U_w}\to {\sf cut}((P,\omega)|_{U_w})$, first let $s:\mu_w(\C^k)\to \C^k$ be the continuous section of $\mu_w$ defined by
\[s(\eta):=\left(\sqrt{\langle -\eta,v_1\rangle},\ldots , \sqrt{\langle -\eta,v_k\rangle}\right)\]
Then one can show that the map $\alpha^P_w:\ctop(P,\omega)|_{U_w}\to (P|_{U_w}\times\C^k)//_0\,K_w$ defined by
\[[p]\mapsto [p,s(\iota^*(\psi(p))-\nu(p))]\]
is a well-defined $G$-equivariant homeomorphism. 
\end{construction}

\begin{rem}\label{r:coninterior}
For $w\in \mathring{W}$ (the interior of W), we have that $\psi|_{U_w}$ is an open embedding into $\fg^*$ itself (i.e., rather than just an embedding into a cone). This means that $K_w$ is trivial and therefore ${\sf cut}((P,\omega)|_{U_w})$ is just $(P|_{U_w},\omega,\pi)$ thought of as a symplectic toric manifold over $\psi|_{U_w}$. 
\end{rem}

\begin{rem}
Since the collection of functors $\cut:\stb_\psi(U)\to \stm_\psi(U)$ for each open $U$ in $W$ commute with restriction, it follows that we have a map of presheaves of groupoids over $\open(W)$
\[\cut:\stb_\psi\to \stm_\psi\]
\end{rem}

In service of classifying the groupoid of symplectic toric bundles over a given unimodular local embedding $\psi:W\to \fg^*$, Karshon and Lerman prove the following lemmas.

\begin{lemma}[Lemma 3.2, \cite{KarshonLerman}]\label{l:KLbijectionlemma}
Let $\psi:W\to \fg^*$ be a unimodular local embedding, let $\pi:P\to W$ be a principal $G$-bundle, and let $A \in \Omega^1(P,\fg)^G$ be a connection 1-form for $P$.  For convenience, define $\mu:=\psi\circ \pi$. Then:
\begin{itemize}
	\item Any closed $G$-invariant 2-form on $P$ with moment map $\mu$ is automatically symplectic; this includes the form $d\langle \mu, A\rangle$.
	\item The map from closed 2-forms on $W$ to closed 2-forms on $P$
	\[\beta\mapsto d\langle \mu,A\rangle +\pi^*\beta\] 
	is a bijection between the set of closed 2-forms on $W$ and the set of $G$-invariant symplectic forms on $P$ with moment map $\mu$.
\end{itemize}
\end{lemma}

\begin{lemma}[Lemma 3.3, \cite{KarshonLerman}]\label{l:KLexactgaugetranslemma}
Let $\psi:W\to \fg^*$ be a unimodular local embedding and let $\pi:P\to W$ be a principal $G$-bundle. For any 1-form $\gamma$ on $W$ and any $G$-invariant symplectic form $\omega$ on $P$ with moment map $\mu$, there exists a bundle isomorphism $f : P \to P$ with $f^*(\omega+\pi^*(d\gamma))=\omega$.
\end{lemma}

Karshon and Lerman also prove that for every open subset $U$ of $W$, $\cut_U:\stb_\psi(U)\to \stm_\psi(U)$ is a fully faithful functor. Observing that for contractible open subsets $V$ of $W$, the groupoid $\stm_\psi(V)$ is connected (i.e., all objects are isomorphic), they also conclude that $\cut$ must be locally essentially surjective. Implicitly using the fact that $\stb_\psi$ is a stack and $\stm_\psi$ is a prestack (see Definition \ref{d:prestack}), they are able to conclude the following theorem.

\begin{thm}[Theorem 4.1, \cite{KarshonLerman}]\label{t:cutequiv} 
Let $\psi:W\to \fg^*$ be a unimodular local embedding. Then
\[\cut:\stb_\psi(W)\to \stm_\psi(W)\]
is an equivalence of categories.
\end{thm}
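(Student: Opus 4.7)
The plan is to assemble the three facts already cited in the prose before the theorem---local fully faithfulness of each $c_U$, local essential surjectivity of $c$ on contractible opens, and the stack/prestack structures on $\stb_\psi$ and $\stm_\psi$---into a standard descent argument showing that $c_W$ is an equivalence on global sections. Equivalences of categories are characterized by being fully faithful and essentially surjective, so I would establish each of these on $W$ separately.

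For full faithfulness on $W$, given objects $(P,\omega), (P',\omega')$ of $\stb_\psi(W)$, a $G$-equivariant symplectomorphism $\varphi:P\to P'$ over $W$ is determined by its restrictions to any open cover, and the same is true for a morphism between the images under $c$ (both sides satisfy the sheaf-of-morphisms property built into being a presheaf of groupoids). Since we already have bijections
\[\Hom_{\stb_\psi(U)}\bigl((P,\omega)|_U,(P',\omega')|_U\bigr) \;\longrightarrow\; \Hom_{\stm_\psi(U)}\bigl(c((P,\omega)|_U), c((P',\omega')|_U)\bigr)\]
for every open $U$ in such a cover, taking a cover $\{V_i\}$ of $W$ and using compatibility on overlaps $V_{ij}$ forces the global map on Hom sets to be bijective as well.

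For essential surjectivity on $W$, I would take $(M,\omega,\pi)\in \stm_\psi(W)$ and choose a cover $\{V_i\}$ of $W$ by contractible opens on which $c_{V_i}$ is essentially surjective. Pick $(P_i,\omega_i)\in \stb_\psi(V_i)$ together with isomorphisms $\alpha_i:c(P_i,\omega_i)\xrightarrow{\sim} (M,\omega,\pi)|_{V_i}$. On each double overlap $V_{ij}$, the composite $\alpha_j^{-1}\circ \alpha_i$ is a morphism in $\stm_\psi(V_{ij})$ which, by the full faithfulness of $c_{V_{ij}}$ just established, lifts uniquely to an isomorphism $\beta_{ij}:(P_i,\omega_i)|_{V_{ij}}\to (P_j,\omega_j)|_{V_{ij}}$ in $\stb_\psi(V_{ij})$. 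Uniqueness of the lifts, together with the evident cocycle identity satisfied by the $\alpha_j^{-1}\alpha_i$ on triple overlaps, forces the cocycle relation $\beta_{jk}\circ \beta_{ij}=\beta_{ik}$, i.e.\ the $\beta_{ij}$ form a descent datum in $\stb_\psi$. The stack condition on $\stb_\psi$ then produces a global object $(P,\omega)\in \stb_\psi(W)$ restricting to each $(P_i,\omega_i)$ compatibly with the $\beta_{ij}$. Applying $c$ turns the $\alpha_i$ into local isomorphisms $c(P,\omega)|_{V_i}\cong (M,\omega,\pi)|_{V_i}$ agreeing on overlaps by construction, and the prestack property of $\stm_\psi$ glues them into a single global isomorphism $c(P,\omega)\cong (M,\omega,\pi)$.

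The main obstacle is the bookkeeping around the descent step: one must verify cleanly that the presheaves $\stb_\psi$ and $\stm_\psi$ really do satisfy the cited stack/prestack conditions for covers on manifolds with corners, and that the cocycle lifts $\beta_{ij}$ are symplectic (not merely bundle) isomorphisms. The latter is essentially automatic because full faithfulness of $c_{V_{ij}}$ already lands in the groupoid of symplectic toric bundles, so Lemma \ref{l:biglem1} guarantees that the local 2-forms $\omega_i$---each of which is determined by a closed 2-form on the base plus a universal curvature term---transport correctly under the $\beta_{ij}$. With these verifications in hand (deferred in the cited paper to the stacks appendix), fully faithful plus essentially surjective on $W$ gives the claimed equivalence.
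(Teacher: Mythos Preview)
Your proposal is correct and follows essentially the same approach as the paper describes: the paper summarizes Karshon--Lerman's argument as (i) $c_U$ is fully faithful for every open $U$, (ii) $c$ is locally essentially surjective on contractible opens, and (iii) the stack/prestack structures on $\stb_\psi$ and $\stm_\psi$ let one glue---precisely the descent argument you wrote out, which is later abstracted in the paper as Lemma~\ref{l:locisotoiso}. One small clarification: full faithfulness of each $c_U$ is an input from \cite{KL}, not something you derive from the cover, so your first paragraph is really just the observation that full faithfulness on a basis plus the prestack property gives full faithfulness on $W$.
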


Using the tools of Lemmas \ref{l:KLbijectionlemma} and \ref{l:KLexactgaugetranslemma}, Karshon and Lerman are able to show that the elements of $\stb_\psi(W)$ are classified by the cohomology classes $H^2(W,\Z_G)\times H^2(W,\R)$ (where $\Z_G := \ker(\exp : \fg \to G)$ is the integral lattice of $\fg$).  Explicitly, they use two characteristic classes: the first Chern class and their  horizontal class.

\begin{defn}\label{d:chor}
For $\psi:W\to \fg^*$ a unimodular local embedding and $(\pi:P\to W,\omega)$ a symplectic toric bundle over $\psi$, if, for a choice of connection $A$ on $P$, $\omega = d\langle \mu,A\rangle + \pi^*\beta$ for $\beta \in \Omega^2(W,\R)$, then {\sf the horizontal class} $\chor([P,\omega]):=[\beta]$ (for $[P,\omega]$ the isomorphism class associated to $(P,\omega)$ in $\stb_\psi(W)$).
\end{defn}

\begin{rem}
As demonstrated by Karshon and Lerman (see Proposition 5.1 \cite{KarshonLerman}), while one must choose a connection on $P$ to define this horizontal class, the class itself is independent not
only of the choice of connection but also the choice of representative of the isomorphism class $[\pi:P\to W,\omega]$ of $\stb_\psi(W)$.  Furthermore, this choice of class is natural with respect to restriction and hence yields a well-defined characteristic class.
\end{rem}

Thus, using the equivalence of categories c, they conclude the following result.

\begin{thm}[Theorem 1.3, \cite{KarshonLerman}]
Let $\psi:W\to \fg^*$ be a unimodular local embedding.  Then:
\begin{enumerate}
	\item The groupoid $\stm_\psi(W)$ is non-empty; that is, there exists a symplectic toric manifold $(M,\omega,\mu)$ with $G$-quotient $\pi:M\to W$ with respect to which $\psi$ is the orbital moment map.
	\item $\pi_0(\stm_\psi(W))$, the set of isomorphism classes of $\stm_\psi(W)$, is in bijective correspondence with the cohomology classes:
	\[H^2(W,\Z_G\times\R)\cong H^2(W,\Z_G)\times H^2(W,\R)\]
\end{enumerate}
\end{thm}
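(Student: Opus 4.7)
The plan is to reduce the classification of $\stm_\psi(W)$ to that of $\stb_\psi(W)$ via the equivalence of categories $c:\stb_\psi(W)\to\stm_\psi(W)$ established just above, and then to classify isomorphism classes of $\stb_\psi(W)$ directly using Lemmas \ref{l:biglem1} and \ref{l:biglem2}. An object of $\stb_\psi(W)$ decomposes, up to isomorphism, into two pieces of data: (a) the isomorphism class of its underlying principal $G$-bundle $\pi:P\to W$, and (b) a $G$-invariant symplectic form on $P$ with moment map $\psi\circ\pi$, considered up to gauge equivalence.

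For non-emptiness, observe that $W$ always admits at least the trivial principal $G$-bundle $W\times G$, and any principal $G$-bundle admits a connection 1-form $A$; Lemma \ref{l:biglem1} then guarantees that $d\langle \psi\circ\pi, A\rangle$ is automatically $G$-invariant and symplectic with moment map $\psi\circ\pi$. Transporting this object across $c$ yields an object of $\stm_\psi(W)$, proving part (1).

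For the classification, first note that since $G$ is a torus with integral lattice $\Z_G$, the classifying space $\BG$ is a $K(\Z_G,2)$, so isomorphism classes of principal $G$-bundles on $W$ are in bijection with $H^2(W,\Z_G)$. Fix now a representative bundle $\pi:P\to W$ together with a connection 1-form $A$. Lemma \ref{l:biglem1} identifies the set of $G$-invariant symplectic forms on $P$ with moment map $\psi\circ\pi$ with the affine space of closed 2-forms on $W$ via $\beta\mapsto d\langle \psi\circ\pi, A\rangle + \pi^*\beta$. To pass to isomorphism classes in $\stb_\psi(W)$, one quotients by the action of gauge transformations of $P$ over $W$: Lemma \ref{l:biglem2} shows that shifting $\beta$ by an exact form $d\gamma$ produces gauge-equivalent symplectic forms, giving a surjective map $H^2(W,\R)\to \pi_0(\{\text{forms on }P\}/\text{gauge})$.

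The main technical obstacle is the converse: one must show that if two forms $\omega_\beta$ and $\omega_{\beta'}$ on $P$ are gauge equivalent, then $\beta-\beta'$ is exact. Since $G$ is abelian, gauge transformations of $P$ correspond to smooth maps $h:W\to G$, and one computes explicitly how such an $h$ acts on the connection $A$ and hence on $\omega_\beta$; the induced change in $\beta$ turns out to be the exterior derivative of a 1-form built from $h$ and the Maurer-Cartan form on $G$. This identifies $\pi_0$ of the space of forms with $H^2(W,\R)$. Assembling the two factors yields $\pi_0(\stb_\psi(W))\cong H^2(W,\Z_G)\times H^2(W,\R)$, and transporting across the equivalence $c$ delivers the bijection of part (2) for $\stm_\psi(W)$.
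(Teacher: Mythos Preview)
Your proposal is correct and follows essentially the same approach as the paper (which is itself only sketching the argument from \cite{KL}): reduce to $\stb_\psi(W)$ via the equivalence $c$, then classify bundles by Chern class in $H^2(W,\Z_G)$ and forms via Lemmas~\ref{l:biglem1} and~\ref{l:biglem2}. Your identification of the one nontrivial step---the converse direction, that gauge-equivalent forms differ by an exact $\beta$---and your method of computing the effect of a gauge map $h:W\to G$ on $A$ via the Maurer--Cartan form is exactly what is needed and matches how \cite{KL} handles it.
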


% Since $\cut$ is in fact an isomorphism of presheaves and it can be shown the identification of symplectic toric bundles with elements of $H^2(W,\Z_G)\times H^2(W,\R)$ commutes with restrictions
% as well, it is fitting to call the elements of $H^2(W,\Z_G) \times H^2(W,\R)$ characteristic classes for symplectic toric manifolds over $\psi$.

%%%%%%%%%%%%%%%%%%%%%%%%%%%%%%%%%%%%%%%%%
\part{Classifying symplectic toric cones}\label{part:stc}
%%%%%%%%%%%%%%%%%%%%%%%%%%%%%%%%%%%%%%%%%

As symplectic toric stratified spaces are built from symplectic toric cones, to understand the former spaces, it is necessary to understand the latter. In Section \ref{s:hule}, we fully describe
these cones as well as their orbital moment maps. Recall that a symplectic toric manifold $(M,\omega,\mu:M\to \fg^*)$ is a {\em symplectic toric cone} if $M$ has a free and proper action of $\R$
commuting with the action of $G$ and, with respect to any action diffeomorphism $\rho_\lambda:M\to M$ for this $\R$ action, we have $\rho_\lambda^*\omega=e^\lambda\omega$.  Additionally, we impose that the moment map $\mu$ for $(M,\omega)$ is {\em the homogeneous moment map}, i.e., that which satisfies $\mu(t\cdot p)=e^t\mu(p)$ for every $t\in \R$ and $p\in M$ (such a moment map for $(M,\omega)$ always exists).

Since any symplectic toric cone $(M,\omega,\mu)$ is, in particular, a symplectic toric manifold, it follows, as in \cite{KarshonLerman}, that the $G$-quotient $M/G$ is a manifold with corners and the orbital moment map $\bar{\mu}: M/G \to \fg^*$ is a unimodular local embedding. As a consequence of $\mu$ being homogeneous, we may conclude that $\bar{\mu}$ satisfies two additional properties: the quotient $M/G$ inherits a free and proper $\R$ action and, with respect to this action, $\bar{\mu}$ is itself homogeneous.  Given an arbitrary manifold with corners $W$ for which there is a unimodular local embedding $\psi : W \to \fg^*$, we call $\psi$ a {\em homogeneous unimodular local embedding} if it and $W$ satisfy these additional properties (see Definition \ref{d:hule}).

As in the case of symplectic toric manifolds, it makes sense to group together symplectic toric cones by orbital moment map: for any homogeneous unimodular local embedding $\psi : W \to \fg^*$, we define {\em the groupoid of symplectic toric cones over $\psi$}, denoted $\stc_\psi(W)$, as the groupoid with objects symplectic toric cones with a $G$-quotient map to $W$ for which $\psi$ is the orbital moment map and with morphisms symplectomorphisms preserving these quotients that are both $G$ and $\R$-equivariant (see Definition \ref{d:stc}). It is important to note that we may not initially be sure this groupoid is non-empty.

For any homogeneous unimodular local embedding $\psi:W\to \fg^*$ and for any $\R$-invariant open subset $U$ of $W$, $\psi|_U$ is a homogeneous unimodular local embedding as well. It follows that, for $\open_{\R}(W)$ the full subcategory of $\open(W)$ of $\R$-invariant open subsets of $W$ (see Definition \ref{d:opensubR}), we may form a presheaf of groupoids $\stc_\psi:\op{\open_{\R}(W)}\to \gpoid$.  

In Section \ref{s:hstb}, we define {\em homogeneous symplectic toric bundles over $\psi$} for any homogeneous unimodular local embedding $\psi:W\to \fg^*$ (see Definition \ref{d:hstb}).  A homogeneous symplectic toric bundle over $\psi$ is $(\pi:P\to W,\omega)$, a symplectic toric bundle over $\psi$ (see Definition \ref{d:stb}), with an $\R$ action on $P$ such that $(P,\omega)$ is a symplectic toric cone with homogeneous moment map $\psi\circ \pi$. Taking a map of homogeneous symplectic toric bundles over $\psi$ to be any isomorphism of principal $G$-bundles over $W$ that is both a symplectomorphism and $\R$-equivariant, we may then define {\em the groupoid of homogeneous symplectic toric bundles over $\psi$}, denoted $\hstb_\psi(W)$.  As in the case of symplectic toric cones, homogeneous symplectic toric bundles also define a presheaf $\hstb_\psi:\op{\open_{\R}(W)}\to \gpoid$.

We also describe in this section some of the important properties of homogeneous symplectic toric bundles. Indeed, the overarching goal of this section is to prove that, at least up to isomorphism class, homogeneous symplectic toric bundles have symplectic form more or less determined by their $\R$ action.  Specifically, we will show that, up to isomorphism, homogeneous symplectic toric bundles are identifiable with {\em homogeneous principal $G$-bundles over $W$}, principal $G$-bundles $\pi:P\to W$ with free and proper $\R$-actions with respect to which $\pi$ is equivariant (see Definition \ref{d:bgr}).  Homogeneous principal $G$-bundles over $W$ also form a presheaf $\BG_\R:\op{\open_{\R}(W)}\to \gpoid$.  Of particular note in Section \ref{s:hstb} is Proposition \ref{p:homogconn}, in which we show that every homogeneous principal $G$-bundle over $W$ admits an exact $G$-invariant symplectic form $\omega$ with respect to which $(P,\omega)$ is a homogeneous symplectic toric bundle.  Additionally, Proposition \ref{p:hstbnohorclass} shows that any two elements $(P,\omega)$ and $(P',\omega')$ of $\hstb_\psi(W)$ are isomorphic exactly when $P$ and $P'$ are isomorphic elements of $\BG_\R(W)$.

In Section \ref{s:hc}, we define the map of presheaves $\hc:\hstb_\psi \to \stc_\psi$ (Definition \ref{d:hc}). This is just the map of presheaves $\cut$ of Karshon and Lerman (see Section \ref{s:KL}) which additionally takes the $\R$ action of a homogeneous symplectic toric bundle to the $\R$ action of a symplectic toric cone. In showing that the category $\hstb_\psi(W)$ is non-empty for any homogeneous unimodular local embedding $\psi:W\to \fg^*$, this functor allows us to conclude that the groupoid $\stc_\psi(W)$ must be non-empty as well. In Theorem \ref{t:hciso}, we show that $\hc$ is an isomorphism of presheaves over $\open_\R(W)$. With this in mind, we may focus on identifying the isomorphism classes of homogeneous symplectic toric bundles.

In Section \ref{s:stccharclasses}, we provide characteristic classes for symplectic toric cones. This is done via Proposition \ref{p:hstbequalshomogbundles}, which shows that, for every homogeneous unimodular local embedding $\psi:W\to \fg^*$, the isomorphism classes of $\stb_\psi(W)$ are in bijective correspondence with the isomorphism classes of $\BG_\R(W)$. This allows us to conclude that homogeneous symplectic toric bundles are classified by Chern classes of $H^2(W,\Z_G)$, for $\Z_G$ the integral lattice $\ker(\exp : \fg \to G)$ (this is the content of Proposition \ref{p:homogchern}).  Finally, we are able to use the isomorphism of presheaves $\hc$ to conclude Theorem \ref{t:stcclassify}: the isomorphism classes of $\stc_\psi(W)$ are in bijective correspondence with the cohomology classes $H^2(W,\Z_G)$.

We end this part by remarking that this classification of symplectic toric cones descends to a classification of contact toric manifolds; indeed, there is a natural isomorphism between contact toric manifolds whose symplectization is a symplectic toric cone over $\psi:W\to \fg^*$ and the cohomology classes $H^2(W,\Z_G)$. Note that, in the case where it is known that $\psi$ comes from a symplectic toric cone, this was shown by Lerman in \cite{LermanCTM}. However, our classification does establish exactly which maps are orbital maps for the symplectizations of (not necessarily connected and compact) contact toric manifolds.

%%%%%%%%%%%%%%%%%%%%%%%%%%%%%%%%%%%%%%%%%
\section{Homogeneous unimodular local embeddings}\label{s:hule}
%%%%%%%%%%%%%%%%%%%%%%%%%%%%%%%%%%%%%%%%%

In this section, we discuss homogeneous unimodular local embeddings; we will eventually show that these maps are exactly the orbital moment maps of symplectic toric cones. We
also define the category of symplectic toric cones over a choice of homogeneous unimodular local embedding. To begin, we reintroduce symplectic cones:

\begin{defn}\label{d:sympcone}
A {\sf symplectic cone} is a symplectic manifold $(M,\omega)$ with a free and proper $\R$ action such that, for every $\lambda\in\R$ with action diffeomorphism $\rho_\lambda:M\to M$, $\rho_\lambda^*\omega = e^\lambda\omega$.

For $(M,\omega)$ a symplectic cone with an action of $G$, we call a triple $(M,\omega,\mu:M\to \fg^*)$ a {\sf symplectic toric cone} if
\begin{itemize}
	\item the actions of $G$ and $\R$ on $M$ commute;
	\item the action of $G$ on $(M,\omega)$ is a symplectic toric action with moment map $\mu$; and
	\item $\mu$ is the homogeneous moment map for $(M,\omega)$: for every $\lambda\in \R$ and $p\in M$, $\mu(\lambda\cdot p)=e^\lambda \mu(p)$.
\end{itemize}
\end{defn}

\begin{rem}\label{r:homogmomentmapformula}
For any symplectic toric cone $(M,\omega)$, the formula
\[\langle \mu,X\rangle = \omega(\Xi,X_M)\]
defines a homogeneous moment map on $(M,\omega)$, where $\Xi$ is the vector field generating the free and proper action of $\R$ on $M$ and $X_M$ is the vector field on $M$ associated to $X\in \fg$.

Additionally, note that there is only one homogeneous moment map associated to $(M,\omega)$.
\end{rem}

Basic known information and properties of symplectic cones and their relationship with contact manifolds has been relegated to Appendix \ref{app:stcs}.

Let's define homogeneous unimodular local embeddings:

\begin{defn}\label{d:hule}
For $W$ a manifold with corners with a free and proper action of $\R$, a {\sf homogeneous unimodular local embedding} is a unimodular local embedding $\psi:W\to \fg^*$ (see
Definition \ref{d:ule}) so that $\psi(t \cdot w) = e^t \psi(w)$ for every $t\in \R$ and $w\in W$.
\end{defn}

\begin{rem}\label{r:actionindexpres}
Since the action of a Lie group on a manifold with corners is carried out via diffeomorphisms, it follows that the action of $\R$ on $W$ must preserve the index of any point $w\in W$ and furthermore, for $S^k(W)$ the stratum of index $k$ elements of $W$ (see the notation section for a description of index), the action of $\R$ on $W$ restricts to a smooth action on $S^k(W)$.
\end{rem}

We will show that the orbital moment map of a symplectic toric cone is a homogeneous unimodular local embedding. To accomplish this, we need the following technical lemma:

\begin{lemma}\label{l:descendingproperactions}
Let $H$ and $K$ be Lie groups and let $K$ be compact. Suppose $X$ is a Hausdorff topological space on which $H$ and $K$ have continuous commuting actions and let $\pi:X\to X/K$ be the quotient. Then the action of $H$ descends to a continuous action on $X/K$.  Furthermore, this action is proper if and only if the action of $H$ on $X$ is proper.
\end{lemma}

\begin{proof}
That the action of $H$ descends to an action on $X/K$ follows from the assumption that the actions of $H$ and $K$ on $X$ commute. Then we have the following commutative diagram:
\begin{equation}\label{eq:actiondiag}\xymatrix{H\times X \ar[r]^\Phi\ar[d]_{\mathrm{id}_H\times \pi} & X\times X \ar[d]^{\pi\times \pi} \\ H\times X/K \ar[r]_{\bar{\Phi}} & X/K\times X/K}\end{equation}
where $\Phi$ and $\bar{\Phi}$ are the maps $\Phi(h,x):=(h\cdot x,x)$ and $\bar{\Phi}(h,[x]):=(h\cdot[x],[x]) = ([h\cdot x],[x])$.

In general, for any commutative diagram of Hausdorff topological spaces
\[\xymatrix{A \ar[r]^{f} \ar[d]_g & B \ar[d]^{h} \\ C\ar[r]_i & D}\]
such that $g$ and $h$ are both surjective and proper, $f$ is proper if and only if $i$ is proper (we leave verification of this fact to the reader).  Since $K$ is compact, we have that $\pi$ is proper and that $X/K$ is Hausdorff (see Theorem 3.1, pp. 38 of \cite{Bredon}).  Therefore, we may apply this more general fact about Hausdorff topological spaces to our above commutative diagram \eqref{eq:actiondiag} and conclude that $\Phi$ (and hence the action of $H$ on $X$) is proper if and only if $\bar{\Phi}$ (and hence the action of $H$ on $X/K$) is proper.
% First, assume the action of $H$ on $X/K$ is proper. Let $C$ be a compact subset of $X\times X$.
% It follows from the commutativity of the above diagram that
% \[\Phi^{-1}(C)\subset (\mathrm{id_H}\times \pi)^{-1}(\bar{\Phi}^{-1}((\pi\times\pi)(C)))\]
% By assumption, $\bar{\Phi}$ is proper. Since $K$ is compact, we also have that $\pi$ is proper (see Theorem 3.1, pp. 38 of \cite{Bredon}). Hence, $(\mathrm{id}_H\times\pi)^{-1}\bar{\Phi}((\pi\times\pi)(C))$ is a compact subset of $H\times X$. Since $X\times X$ is Hausdorff and $C$ is compact, $C$ is a closed subset of $X\times X$. Thus, $\Phi^{-1}(C)$ is a closed subset of the compact set $(\mathrm{id}_H\times \pi)^{-1}(\bar{\Phi}^{-1}((\pi\times\pi)(C)))$ and is therefore also compact.

% On the other hand, assume the action of $H$ on $X$ is proper. Let $D$ be a compact subset of $X/K \times X/K$. Then we may find a compact subset $D'$ of $X/K$ such that $D\subset D'\times D'$.
%  Since $\pi$ is proper, $(\pi\times \pi)^{-1}(D'\times D')=\pi^{-1}(D')\times \pi^{-1}(D')\subset X\times X$ is compact. Thus, $\Phi^{-1}(D'\times D')$ is compact, as is $(\mathrm{id}_H\times \pi)(\Phi^{-1}(D'\times D'))=\bar{\Phi}^{-1}(D'\times D')$.  As $K$ is compact, the quotient $X/K$ is Hausdorff (again, see Theorem 3.1, pp. 38 of \cite{Bredon}); therefore, $D$ is a closed
% subset of $X/K\times X/K$, hence $\bar{\Phi}^{-1}(D)$ is a closed subset of the compact subset $\bar{\Phi}^{-1}(D'\times D')$ of $H \times X/K$ and so is also compact. 
\end{proof}

\begin{prop}
Let $(M,\omega,\mu:M\to \fg^*)$ be a symplectic toric cone. Then for $G$-quotient $\pi:M\to M/G$, the orbital moment map $\bar{\mu}:M/G\to \fg^*$ is a homogeneous unimodular local embedding.
\end{prop}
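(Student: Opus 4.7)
The plan is as follows. Since $(M, \omega, \mu)$ is in particular a symplectic toric manifold, Proposition ~\ref{p:ule} already tells us that $M/G$ is naturally a manifold with corners and that $\bar{\mu}:M/G \to \fg^*$ is a unimodular local embedding. What remains is to endow $M/G$ with a free and proper $\R$-action and to verify the homogeneity condition $\bar{\mu}(t\cdot w) = e^t \bar{\mu}(w)$.

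Because the $\R$-action on $M$ commutes with the $G$-action, it descends to a continuous $\R$-action on the topological quotient $M/G$. I would verify smoothness of this descent locally using slices for the $G$-action: since the $\R$-flow is $G$-equivariant, it carries slices to slices, and the local model of $M/G$ near an orbit agrees with the quotient of a slice by the stabilizer. Homogeneity of $\bar{\mu}$ is then immediate from that of $\mu$: for any $p\in M$ and $t\in \R$,
\[\bar{\mu}(t\cdot [p]) = \mu(t\cdot p) = e^t\mu(p) = e^t\bar{\mu}([p]).\]

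The substantive step is verifying that the descended $\R$-action on $M/G$ is free and proper. My plan is to sidestep direct computation by appealing to Proposition ~\ref{p:raction}. To invoke it, I must check that $\bar{\mu}(M/G) = \mu(M)$ is closed under radial scaling in $\fg^*$ and avoids the origin. The first is a one-line consequence of the homogeneity $\mu(t \cdot p) = e^t \mu(p)$. The second, that $\mu$ is nowhere vanishing, is the standard fact (which I would quote from Appendix ~\ref{a:sympcones}) that the homogeneous moment map of a symplectic cone can be written $\langle \mu(p), X \rangle = \omega_p(R_p, X_M(p))$ for $R$ the Liouville vector field; vanishing of $\mu$ at $p$ would force $R_p$ to be symplectically orthogonal to the tangent space of the $G$-orbit, which is Lagrangian at points of free $G$-action by dimension count, forcing $R_p$ itself to lie in the $G$-orbit tangent space — incompatible with the transversality of the commuting free $\R$- and $G$-actions.

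With these two hypotheses in hand, Proposition ~\ref{p:raction} produces a unique free and proper $\R$-action on $M/G$ making $\bar{\mu}$ homogeneous. By the uniqueness clause, this action must agree with the $\R$-action descended from $M$, so the descended action is itself free and proper. The main obstacle in this plan is the nonvanishing of $\mu$; once that standard input is granted, every other step is routine.
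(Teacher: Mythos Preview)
Your approach is essentially the paper's: invoke Proposition~\ref{p:ule} for the unimodular local embedding structure, then verify the two hypotheses of Proposition~\ref{p:raction} (image radially invariant and avoiding the origin) to obtain the free proper $\R$-action making $\bar{\mu}$ homogeneous. The paper's proof is exactly this, citing Proposition~\ref{p:hstc} for the nonvanishing of $\mu$; your added observation that the uniqueness clause of Proposition~\ref{p:raction} identifies this action with the one descended from $M$ is correct and slightly more explicit than the paper.

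One caution: your sketched justification for $\mu\neq 0$ is not the argument in Appendix~\ref{a:sympcones}, and as written it has a gap. You deduce that at a free point $p$ with $\mu(p)=0$ the Liouville vector $R_p$ lies in $T_p(G\cdot p)$, but there is no a~priori ``transversality of the commuting free $\R$- and $G$-actions'' to contradict: nothing prevents the infinitesimal generators from being linearly dependent at a single point, and the argument says nothing at points with nontrivial stabilizer. The paper's Proposition~\ref{p:hstc} instead passes to the contact quotient $(B,\xi)$ and quotes Lerman's result (Lemma~2.12 of \cite{conttoric}) that the $\alpha$-moment map of a contact toric manifold is nowhere zero. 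Since you plan to cite the appendix anyway, your proof stands; just don't rely on the transversality heuristic.
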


\begin{proof}
By Proposition \ref{p:KLorbmmaule}, $M/G$ is a manifold with corners and $\bar{\mu}:M/G\to \fg^*$ is a unimodular local embedding. By Lemma \ref{l:descendingproperactions}, the action of $\R$ on $M$ descends to a proper action of $\R$ on $M/G$.

Now, we need to show that the descending action of $\R$ on $M/G$ is free.  Given any $\lambda\in \R$, $g\in G$, and $p\in M$, notice that, if $\lambda\cdot p = g\cdot p$, then
\[e^\lambda\mu(p)=\mu(\lambda\cdot p)=\mu(g\cdot p)=\mu(p)\]
Since $\mu(p)\neq 0$ (see Proposition \ref{p:nozeroinhomogmm}), we must have that $\lambda=0$. Finally, as $\mu$ is homogeneous, $\bar{\mu}$ must be homogeneous as well. 
\end{proof}

We will now discuss a number of results demonstrating how the free and proper $\R$ action on a manifold with corners $W$ associated to a homogeneous unimodular local embedding $\psi:W\to \fg^*$ give us some additional information about $\psi$. First up, we show that, for any homogeneous unimodular local embedding $\psi$, we may choose $\R$-invariant open subsets of $W$ on which $\psi$ is an embedding:

\begin{lemma}\label{l:Rinvunimconeembeddingnbds}
Let $\psi:W\to \fg^*$ be a homogeneous unimodular local embedding. Fix a point $w\in W$ and let $C=C_{\{v_1,\ldots,v_k\},\psi(w)}$ be the unimodular cone with vertex $\psi(w)$ into which $\psi$ is an open embedding near $w$. Then for $\fk$ the Lie subalgebra of $\fg$ generated by $\{v_1 ,\ldots ,v_k\}$, $\psi(w)\in \fk^o$.  Furthermore, there exists an $\R$-invariant neighborhood $U_w$ of $w$ so that $\psi|_{U_w}$ is an open embedding into $C$.
\end{lemma}

\begin{proof}
Note that the unimodular cone $C = C_{\{v_1,\ldots,v_k\},\psi(w)}$ with vertex $\psi(w)$, given by
\[\{\eta\in \fg^*\,|\,\langle \eta-\psi(w),v_i\rangle\geq 0,\,1\leq i\leq k\}\]
contains the affine subspace
\[A = \{\eta\in \fg^*\,|\,\langle \eta-\psi(w),v_i\rangle=0,\,1\leq i\leq k\}=\fk^o+\psi(w)\]
Note also that, in the manifold with corners $C$, we have that $A = S^k(C)$; i.e., $A$ is exactly the stratum of index $k$ elements of $C$. In particular, we may conclude that $\psi(w)$ has index
$k$ in $C$ and, since $\psi$ near $w$ is an open embedding into $C$, it follows that $w$ must also have index $k$ in $W$.

Now, suppose that $U$ is a neighborhood of $w$ such that $\psi|_U$ is an open embedding into $C$.  Choose $t\in \R\backslash\{0\}$ for which $t\cdot w$ is also in $U$. Then, as noted in Remark \ref{r:actionindexpres}, the action of $\R$ must preserve index and so $t\cdot w$ must also have index $k$. It follows that $\psi(t\cdot w)=e^t\psi(w)$ is in $A$. Therefore, since $A$ is an affine subspace, $A$ must contain the whole ray $e^\lambda \psi(w)$ and hence must contain $0$. Thus, $A$ is just the subspace $\fk^o$. In particular, for all $\lambda\in \R$, $\psi(\lambda\cdot w)\in \fk^o$.

Next, for any $t\in \R$ and $\eta\in C$, note that 
\[\langle e^t\eta-\psi(w),v_i\rangle=e^t\langle\eta-e^{-t}\psi(w),v_i\rangle=e^t\langle \eta,v_i\rangle.\]
It thus follows that $e^tC=C$. So for any $U$ on which $\psi|_U$ is an open embedding, we therefore have that $\psi|_{t\cdot U} = e^t\cdot (\psi|_U)$ is an open embedding into $e^tC=C$. Since our choice of $t$ was arbitrary, it follows that $\psi|_{\R\cdot U}$ is an $\R$-equivariant local open embedding into $C$. Since an injective local open embedding must be an open embedding, we conclude by showing that we may choose $U$ small enough so that $\psi|_{\R\cdot U}$ is injective.

Choose a norm $||\cdot ||$ on $\fg^*$ such that $S(\fg^*)$ contains $\psi(w)$. Since $\fg^*\backslash\{0\}$ is equivariantly diffeomorphic $S(\fg^*)\times \R$ and since $C$ is $\R$-invariant, we may choose an open neighborhood $V$ of $\psi(w)$ in $S(\fg^*)$ and an open interval $(-\epsilon,\epsilon)\subset \R$ so that the open subset $\mathcal{U} := C\cap(V\times (-\epsilon,\epsilon))$ of $C$ is completely contained within the image of $\psi|_U$, for $U$ a neighborhood of $w$ on which $\psi|_U$ is an open embedding to $C$.

Notice then that $\mathcal{V}:=U\cap \psi^{-1}(C\cap V)$ is a set containing $w$ which intersects the $\R$-orbits of $W$ at most once and for which $U_w:= \R\cdot \mathcal{V}$ is open. To see that the first item is true: for any $v \in \mathcal{V}$, note that $||\psi(t\cdot v)||=e^t||\psi(v)||$; hence, the action of $\R$ displaces $\psi(v)$ from $S(\fg^*)$ and so $t\cdot v$ can never be in $\mathcal{V}$ (for $t$ nonzero). To see that the second item is true: by the $\R$-equivariance of $\psi$, we have that $(-\epsilon,\epsilon)\cdot \mathcal{V} = U\cap\psi^{-1}(C\times (-\epsilon,\epsilon))$ which is open. Then one may use the action of $\R$ to write $\R\cdot \mathcal{V}$ as a union of translations of this open subset.

Finally, note that, for any two elements $t\cdot v$ and $t'\cdot v'$ of $U_w$ for $v$ and $v'$ in $\mathcal{V}$, $\psi(t\cdot v)=\psi(t'\cdot v')$ exactly when $e^{t-t'}\psi(v)=\psi(v')$. Again, since $\psi(v)$ and $\psi(v')$ are on $S(\fg^*)$, this is only possible for $t=t'$ which, since $\psi$ is injective on $\mathcal{V}$, necessitates that $t\cdot v=t'\cdot v'$. Hence, $\psi|_{U_w}$ is injective and therefore is an $\R$-equivariant embedding into $C$. 
\end{proof}

With this lemma, we may now prove that the $\R$-quotient map associated to a homogeneous unimodular local embedding is a principal $\R$-bundle in the category of manifolds with corners.

\begin{prop}\label{p:WaprincRbundle}
Let $\psi:W\to \fg^*$ be a homogeneous unimodular local embedding and let $q : W \to W/\R$ be an $\R$-quotient map for the action of $\R$ on $W$. Then $q$ is a principal $\R$-bundle in the category of manifolds with corners.
\end{prop}

\begin{rem}
While it's likely one could prove in general that the quotient map associated to a smooth proper action of a Lie group $K$ on a manifold with corners $M$ is a principal $K$-bundle in the category of manifolds with corners using an adapted version of the Slice Theorem for manifold with corners, to the knowledge of the author, such an adapted version of the Slice Theorem doesn't exist (\cite{AlbinMelrose} gets close with ``a Tube Theorem'' for compact group actions, but this relies on a doubling trick requiring $M$ to be compact).  We instead use Lemma \ref{l:Rinvunimconeembeddingnbds} to prove Proposition \ref{p:WaprincRbundle} as opposed to this more general counterpart.
\end{rem}

\begin{proof}[Proof of Proposition \ref{p:WaprincRbundle}]
First, since $W$ is a Hausdorff and locally compact space and the action of $\R$ is proper, $W/\R$ is Hausdorff (see Theorem 1.2.9 of \cite{Palais}).

Now, fix $w\in W$ and let $U_w$ be an $\R$-invariant neighborhood of $w$ for which $\psi|_{U_w}$ is an open embedding into an $\R$-invariant unimodular cone $C$ with vertex $\psi(w)$ in $\fg^*$ (as constructed in Lemma \ref{l:Rinvunimconeembeddingnbds}). Fix a norm $||\cdot ||$ on $\fg^*$ with sphere $S(\fg^*)$. As explained under the statement of Theorem 6.4 in \cite{Joyce}, to show that $C\cap S(\fg^*)$ is a manifold with corners, it is enough to show that $C$ and $S(\fg^*)$  are transverse as manifolds with corners (this is slightly more involved than the ``non-cornered'' definition; again, see \cite{Joyce}). Since the radial action of $\R$ on $C$ must preserve each of the constant index strata $S^k(C)$ of $C$, it follows that each face of $C$ is transverse to $S(\fg^*)$ and
therefore that $C$ and $S(\fg^*)$ must be transverse as manifolds with corners. Hence, $C\cap S(\fg^*)$ and, more importantly, $\psi(U_w)\cap S(\fg^*) \subset C\cap S(\fg^*)$ are manifolds with corners.

Therefore, since each $\R$-orbit of $\fg^*\backslash\{0\}$ intersects $S(\fg^*)$ exactly once and since $\psi(U_w)$ is $\R$-invariant, we have an $\R$-equivariant diffeomorphism 
\[\R\times (\psi(U_w)\cap S(\fg^*))\cong \R\cdot (\psi(U_w)\cap S(\fg^*)) = \psi(U_w)\cong U_w.\]  
From this identification, it follows both that $W/\R$ inherits the structure of a manifold with corners (i.e., via the homeomorphism $U_w/\R\cong \psi(U_w)\cap S(\fg^*)$) and that, with respect to this smooth structure, $q : W \to W/\R$ has smooth trivializations as a principal $\R$-bundle.
\end{proof} 

\begin{rem}
Notice that any principal $\R$-bundle in the category of manifolds with corners is {\em smoothly} trivializable.  Indeed, one may easily adapt the proof of Proposition 2.25, \cite{LermanCTM2} in the smooth manifolds case to the case of manifolds with corners.
\end{rem}

We now prove that the radial action of $\R$ on $\fg^*$ uniquely determines the $\R$ action on the domain of a homogeneous unimodular local embedding.

\begin{lemma}\label{l:uniqueraction}
Let $\psi:W\to \fg^*$ be a unimodular local embedding. Then there is at most one $\R$ action on $W$ with respect to which $\psi$ is a homogeneous unimodular local embedding.
\end{lemma}

\begin{proof}
Assuming that $W$ comes with an $\R$ action with respect to which $\psi$ is a homogeneous unimodular local embedding, $d\psi$ must intertwine the vector field generating the $\R$ action on
$W$ and the radial vector field on $\fg^*$. Since $\psi$ is a local embedding, this property uniquely determines an $\R$ action on $W$. 
\end{proof}

The following proposition will be useful later for quickly demonstrating certain constructed symplectic manifolds are in fact symplectic toric cones.

\begin{prop}\label{p:quickstcdemo}
Let $(M,\omega,\mu)$ be a symplectic toric manifold. Suppose further that $M$ has a free $\R$ action commuting with the action of $G$ such that
\begin{itemize}
	\item the orbital moment map $\bar{\mu}:M/G\to \fg^*$ is a homogeneous unimodular local embedding (with respect to the $\R$ action descending from $M$ to $M/G$); and
	\item for each $\lambda \in \R$ with action diffeomorphism $\rho_\lambda:M\to M$, $\rho_\lambda^*\omega = e^{\lambda}\omega$.
\end{itemize}
Then $(M,\omega,\mu)$ is a symplectic toric cone.
\end{prop}

\begin{proof}
Since $\bar{\mu}$ is a homogeneous unimodular local embedding, the action of $\R$ on $M/G$ is, by definition, proper.  Therefore, by Lemma \ref{l:descendingproperactions}, the $\R$ action on $M$ is proper, so $(M,\omega,\mu)$ is a symplectic toric cone. 
\end{proof}

We now group symplectic toric cones together by orbital moment maps to define the category of symplectic toric cones over a homogeneous unimodular local embedding $\psi$.

\begin{defn}\label{d:stc}
Let $\psi:W\to \fg^*$ be a homogeneous unimodular local embedding. Then a {\sf symplectic toric cone over $\psi$} is a symplectic toric cone $(M,\omega,\mu)$ together with a $G$-quotient $\pi:M\to W$ so that $\mu=\psi\circ \pi$. This data is represented by the triple $(M,\omega, \pi:M\to W)$.

Denote by $\stc_\psi(W)$ {\sf the category of symplectic toric cones over $\psi$}, the groupoid with
\begin{itemize}
	\item objects: symplectic toric cones over $\psi$; and
	\item morphisms: $(G \times \R)$-equivariant symplectomorphisms
	\[\varphi:(M,\omega,\pi:M\to W)\to (M',\omega',\pi':M'\to W)\]
satisfying $\pi'\circ\varphi=\pi$.
\end{itemize}
\end{defn}

Like symplectic toric manifolds over a specific unimodular local embedding, symplectic toric cones over a homogeneous unimodular local embedding form a presheaf. To account for the fact that we only want to consider $\R$-invariant open subsets, we will use a smaller category as the domain of this presheaf.

\begin{defn}\label{d:opensubR}
Let $W$ be a manifold with corners with a free and proper $\R$ action. Denote by $\open_{\R}(W)$ the full subcategory of $\open(W)$ of $\R$-invariant subsets of $W$.
\end{defn}

\begin{prop}\label{p:stcapsheaf}
 Let $\psi:W\to \fg^*$ be a homogeneous unimodular local embedding. Then
\[U\mapsto \stc_\psi(U):=\stc_{\psi|_U}(U)\]
defines a presheaf over $\open_{\R}(W)$.
\end{prop}

\begin{proof}
It is enough to note that, given $(M,\omega,\pi:M\to W)$ any symplectic toric cone over $\psi$, the $\R$ action on $M$ descends to an $\R$ action on $W$ with respect to which $\psi$
is equivariant. By Lemma \ref{l:uniqueraction}, $\pi$ must be $\R$-equivariant since $\psi$ is $\R$-equivariant with respect to both the given action of $\R$ on $W$ as well as the action descending from $M$.  So, for any $\R$-invariant open subset $U$ of $W$,
\[(M,\omega,\pi:M\to W)|_{U}:= \left(\pi^{-1}(U),\omega|_{\pi^{-1}(U)},\pi|_{\pi^{-1}(U)}\right)\]
is a well-defined symplectic toric cone over $\psi|_U$. 
\end{proof}

%%%%%%%%%%%%%%%%%%%%%%%%%%%%%%%%%%%%%%%%%
\section{Homogeneous symplectic toric bundles}\label{s:hstb}
%%%%%%%%%%%%%%%%%%%%%%%%%%%%%%%%%%%%%%%%%

To classify symplectic toric cones, we will use a homogeneous analogue of the isomorphism of presheaves $\cut : \stb_\psi\to \stm_\psi$ from \cite{KarshonLerman} (see Section \ref{s:KL}). In this section, we define homogeneous symplectic toric bundles (the homogeneous analogue to symplectic toric bundles) and present some of their basic properties.

\begin{defn}\label{d:hstb}
 Let $\psi:W\to \fg^*$ be a homogeneous unimodular local embedding. Then a {\sf homogeneous symplectic toric bundle over $\psi$} is a symplectic toric bundle $(\pi:P\to W,\omega)$ (see
Definition \ref{d:stb}) together with a free and proper $\R$ action on $P$ so that
\begin{itemize}
	\item The actions of $G$ and $\R$ on $P$ commute;
	\item $(P,\omega)$ is a symplectic cone with respect to the given $\R$ action; and
	\item $\psi\circ \pi$ is a homogeneous moment map for the action of $G$ on $(P,\omega)$.
\end{itemize}

Denote by $\hstb_\psi(W)$ {\sf the groupoid of homogeneous symplectic toric bundles over $\psi$}. This is the groupoid with objects homogeneous symplectic toric bundles $(\pi:P\to W,\omega)$ and
morphisms maps of symplectic toric principal $G$-bundles (i.e., bundle maps that are also symplectomorphisms) which are additionally $\R$-equivariant.
\end{defn}

Notice that, by requiring that $\psi\circ \pi$ is a homogeneous moment map for the action of $G$ on $(P,\omega)$, we've effectively defined homogeneous symplectic toric bundles as symplectic toric bundles with an $\R$-action with respect to which $(P,\omega,\psi\circ\pi)$ is a symplectic toric cone.  We will find that this condition is equivalent to a simpler, more useful condition we now provide.

\begin{lemma}\label{l:equivquotmap}
Let $\psi:W\to \fg^*$ be a homogeneous unimodular local embedding. Suppose that $(\pi:P\to W,\omega)$ is a symplectic toric bundle with a free and proper $\R$ action, commuting
with the action of $G$, with respect to which $(P,\omega)$ is a symplectic cone. Then $(\pi:P\to W,\omega)$ is a homogeneous symplectic toric bundle if and only if $\pi$ is $\R$-equivariant.
\end{lemma}

\begin{proof} 
The real content of this lemma is showing that $\psi\circ\pi$ is homogeneous if and only if $\pi$ is $\R$-equivariant. First, if $\pi$ is $\R$-equivariant, then for any $p\in P$, 
\[\psi(\pi(t\cdot p)) = \psi(t\cdot (\pi(p))) = e^t\psi(\pi(p)),\] 
so $\psi\circ \pi$ is homogeneous.

On the other hand, suppose $\psi\circ \pi$ is equivariant.  By Lemma \ref{l:descendingproperactions}, note that the action of $\R$ on $P$ descends to a proper action on $\R$ with respect to which $\pi$ is $\R$-equivariant. For any $w\in W$, $p\in P$ satisfying $\pi(p)=w$ and for any $t\in \R$, the action on $W$ descending from $P$ satisfies
\[\psi(t \cdot w) = \psi(t \cdot  \pi(p)) = \psi(\pi(t \cdot p)) = e^t \psi(\pi(p)) = e^t (\psi(w))\]
By Lemma \ref{l:uniqueraction}, there is exactly one action of $\R$ on $W$ with respect to which $\psi$ is $\R$-equivariant; therefore, the action on $P$ descends to the unique action making $\psi:W\to \fg^*$ a homogeneous unimodular local embedding and so $\pi$ is equivariant with respect to the given actions of $\R$ on $P$ and $W$. 
\end{proof}

Principal $G$-bundles with an action of $\R$ as in the statement of Lemma \ref{l:equivquotmap} will be important throughout the rest of Part \ref{part:stc}; they form the following presheaf:

\begin{defn}\label{d:bgr}
Let $W$ be a manifold with corners with a free and proper $\R$ action. Then a {\sf homogeneous principal $G$-bundle over $W$} is a principal $G$-bundle $\pi:P\to W$ with a free and proper $\R$ action that commutes with the action of $G$ and with respect to which $\pi$ is equivariant.

The presheaf of homogeneous principal $G$-bundles over $W$ is the presheaf of groupoids $\BG_{\R}:\op{\open_{\R}(W)} \to \gpoid$ so that, for every $\R$-invariant open subset $U$ of $W$, the groupoid $\BG_{\R}(U)$ is that with
\begin{itemize}
\item objects: homogeneous principal $G$-bundles over $W$; and
\item morphisms: $\R$-equivariant bundle isomorphisms.
\end{itemize}
\end{defn}

As in the case of symplectic toric cones, the collection of groupoids of homogeneous symplectic toric bundles over a homogeneous unimodular local embedding $\psi:W\to \fg^*$ is a
presheaf of groupoids over $\open_{\R}(W)$.

\begin{prop}\label{p:hstbapresheaf}
 Let $\psi:W\to \fg^*$ be a homogeneous unimodular local embedding. Then
\[U \mapsto \hstb_\psi(U) := \hstb_{\psi|_U}(U)\] 
with the appropriately chosen restriction morphisms defines a presheaf of groupoids $\hstb_\psi: \op{\open_\R(W)}\to \gpoid$.
\end{prop}

As the justification here is essentially the same as in Proposition \ref{p:stcapsheaf}, we omit the proof.

\begin{rem}
In fact, $\hstb_\psi:\op{\open_{\R}(W)}\to \gpoid$ is a stack; this will be important later, but as the proof is a slight adaptation of the proof that principal bundles over a site form a stack, we relegate the proof to the appendix (Proposition \ref{p:hstbastack}).
\end{rem}
 
As in the case of symplectic toric bundles, we will be able to show that, for $\pi:P\to W$ any homogeneous principal $G$-bundle over $W$, we may use a connection to build a symplectic form with respect to which it is a homogeneous symplectic toric bundle. To show this, we will need the following technical lemma.

\begin{lemma}\label{l:techlemma}
Let $\pi:P\to B$ be a principal $G$-bundle of manifolds with corners. Further, suppose $P$ and $B$ admit free and proper actions of $\R$ with respect to which $\pi$ is $\R$-equivariant and the $\R$-quotient $q' : B \to B/\R$ is a principal $\R$-bundle of manifolds with corners. Let $q : P \to P/\R$ be an $\R$-quotient. Finally, suppose that the actions of $G$ and $\R$ on $P$ commute.  

Then the unique map $\varpi : P/\R \to B/\R$ making the diagram
\begin{equation}\label{eq:technicaldiagram}\xymatrix{P\ar[r]^q \ar[d]_\pi & P/\R \ar[d]^{\varpi} \\ B \ar[r]_{q'} & B/\R}\end{equation}
commute is a principal $G$-bundle of manifolds with corners and $q : P \to P/\R$ is a principal $\R$-bundle of manifolds with corners.
\end{lemma}

\begin{proof}
The existence of $\varpi$ is a consequence of the universal property of a quotient: as $\pi$ is $\R$-equivariant and $q'$ is $\R$-invariant, the composition $q\circ \pi$ must be $\R$-invariant. Since the actions of $G$ and $\R$ on $P$ commute, the action of $G$ on $P$ descends to an action of $G$ on $P/\R$.

Let $U$ be a contractible subset of $B/\R$ and $s : U \to B$ be a smooth section. This yields a smooth trivialization $U \times \R \cong V$ for $V$ an open subset of $B$. As $V$ is contractible, there exists another smooth section $t : V \to P$ yielding a smooth trivialization $G\times V\cong X$ for $X$ an open subset of $P$. As $\pi$ is $\R$-equivariant, it follows that we have a smooth $(G\times \R)$-equivariant diffeomorphism $G \times \R \times U \cong X$. By the commutativity of diagram \eqref{eq:technicaldiagram} (and using the fact these are all quotient maps), we have that $X = \pi^{-1}({q'}^{-1}(U)) = q^{-1} (\varpi^{-1}(U))$. In particular, this means that $q(X) = \varpi^{-1}(U)$ and so the diffeomorphism $G \times \R \times U\cong X$ descends to the equivariant homeomorphism $G\times U \cong \varpi^{-1}(U)$.

Via these homeomorphisms, $P/\R$ inherits the structure of a manifold with corners (we once again use Theorem 1.2.9 of \cite{Palais} to conclude that, since the action of $\R$ on $P$ is proper, $P/\R$ is Hausdorff).  With respect to this smooth structure, $\varpi$ is naturally smooth and $ \varpi : P/\R \to B/\R$ is a principal $G$-bundle in the category of manifolds with corners. Additionally, we’ve demonstrated that $q : P \to P/\R$ is a principal $\R$-bundle of manifolds with corners as well.
\end{proof} 

Now, we build a symplectic form for any principal $G$-bundle $\pi:P\to W$ with an appropriate $\R$ action.

\begin{prop}\label{p:homogconn}
Let $\psi: W \to \fg^*$ be a homogeneous unimodular local embedding and let $\pi:P\to W$ be a homogeneous principal $G$-bundle over $W$. Then there exists a connection
1-form $A \in \Omega^1(P,\fg)^G$ so that $(\pi:P\to W,d\langle \psi\circ \pi, A\rangle)$ is a homogeneous symplectic toric bundle.
\end{prop}

\begin{proof}
First, as shown in \cite{KarshonLerman}, any connection 1-form $A$ induces a $G$-invariant symplectic form $d\langle \psi\circ \pi, A\rangle$ with respect to which $\psi \circ \pi$ is a moment map (see Lemma \ref{l:KLbijectionlemma}). We will construct a particular connection so that the form $d\langle\psi\circ \pi,A\rangle$ satisfies the additional conditions required of a homogeneous symplectic toric bundle.

Let $Q := P/\R$ and $B := W/\R$ with $\R$-quotient maps $q : P \to Q$ and $q': W \to B$.  By Proposition \ref{p:WaprincRbundle}, $q'$ is a principal $\R$-bundle of manifolds with corners and so, via Lemma \ref{l:techlemma}, we have the following commutative diagram
\[\xymatrix{P\ar[r]^q \ar[d]_\pi  & Q \ar[d]^{\varpi} \\ W \ar[r]_{q'} & B}\]
where $\varpi : Q \to B$ is a principal $G$-bundle and $q : P \to Q$ is a principal $\R$-bundle, both in the category of manifolds with corners.

Since $Q$ is a principal $G$-bundle, it follows that $\varpi\times \mathrm{id}_{\R} : Q\times \R \to B\times \R\cong W$ is a principal $G$-bundle over $W$ (with respect to the trivially extended action of $G$ on $Q\times \R$). As $q : P \to Q$ is a principal $\R$-bundle and hence trivializable, there is an isomorphism $\phi:P\to Q\times  \R$ of principal $\R$-bundles over $Q$. Since $q$ is $G$-equivariant, $\phi$ must be $G$-equivariant as well.

Let $A'$ be any connection 1-form on the principal $G$-bundle $\varpi \times \mathrm{id}_{\R} : Q \times \R \to B \times \R$ extended trivially from a connection 1-form on the $G$-bundle $ \varpi : Q \to B$. We now finish by showing that, for $A := \phi^*A'$ , $d\langle \psi\circ \pi, A\rangle$ satisfies the necessary conditions for a symplectic form of a symplectic cone.

Fix a real number $\lambda$ and let $\rho_\lambda:P\to P$ and $\tau_\lambda:Q\times \R\to Q\times \R$ be the diffeomorphisms associated to the action of $\lambda$. As $\phi$ is $\R$-equivariant, $\phi\circ \rho_\lambda=\tau_\lambda\circ \phi$ and, by design, we have $\tau^*_\lambda A'=A'$.  Using these facts, we calculate:
\begin{align*}
\rho_\lambda^*d\langle \psi\circ \pi, A\rangle &= d\langle \psi\circ \pi\circ \rho_\lambda,\rho_\lambda^*(\phi^*A')\rangle = d\langle e^\lambda\cdot (\psi\circ \pi),\phi^*(\tau^*_\lambda A')\rangle \\ &=d(e^\lambda\langle \psi \circ \pi,\phi^*A'\rangle) = e^\lambda d\langle\psi\circ\pi,A\rangle
\end{align*}
Therefore, using Proposition \ref{p:quickstcdemo}, we may conclude that the action of $\R$ on $P$ is proper and that $(\pi:P\to W, d\langle \psi\circ \pi, A\rangle)$ is a homogeneous symplectic toric bundle over $\psi$. 
\end{proof}

We will soon show that two homogeneous symplectic toric bundles are isomorphic in $\hstb_\psi$ exactly when there is an $\R$-equivariant bundle isomorphism between them. To prove this, we need the following lemma.

\begin{lemma}\label{l:hstbdiffext}
Let $\psi : W \to \fg^*$ be a homogeneous unimodular local embedding and let $\pi:P\to W$ be a homogeneous principal $G$-bundle over $W$. Suppose $\omega$ and $\omega'$ are two symplectic forms so that $(\pi:P\to W,\omega)$ and $(\pi:P\to W,\omega')$ are both homogeneous symplectic toric bundles. Then the form $\omega-\omega'$ is basic and, for $\omega-\omega'=\pi^*\beta$, $\beta$ is exact.

Furthermore, there is a primitive $\gamma$ of $\beta$ satisfying $\rho_\lambda^*\gamma = e^\lambda \gamma$ for any $\lambda\in \R$ with action diffeomorphism $\rho_\lambda:W\to W$.
\end{lemma}

\begin{proof}
Fix a connection 1-form $A$ for which $(\pi:P\to W, d\langle \psi\circ \pi, A\rangle)$ is a homogeneous symplectic toric bundle (as constructed in Proposition \ref{p:homogconn}).  Then $\omega-d\langle \psi\circ \pi, A\rangle$ and $\omega'-d\langle \psi\circ \pi,A\rangle$ are both basic (see Lemma \ref{l:KLbijectionlemma}); thus, the difference $\omega-\omega'$ is basic as well.  Write $\omega-\omega'=\pi^*\beta$.

Fix $\lambda\in \R$. Writing $\tau_\lambda:P\to P$ for the action isomorphism of $\lambda$ on $P$, we have by assumption that $\tau_\lambda^*\omega = e^\lambda\omega$ and $\tau_\lambda^*\omega' = e^\lambda\omega'$.  So $\tau_\lambda^*(\pi^*\beta) = e^\lambda (\pi^*\beta)$.

As $\pi$ is $\R$-equivariant, we have that $\pi\circ \tau_\lambda = \rho_\lambda\circ \pi$ (for $\rho_\lambda$ again the action diffeomorphism for the action of $\lambda$ on $W$). So, we calculate:
\[\pi^*(\rho_\lambda^*\beta) = \tau_\lambda^*(\pi^*\beta) = e^\lambda(\pi^*\beta) =\pi^*(e^\lambda \beta)\]
Since $\pi$ is a submersion, it follows that $\rho_\lambda^*\beta = e^\lambda\beta$.

Finally, write $\Xi$ for the vector field on $W$ generating the action of $\R$. Then $\beta$ satisfies $\mathcal{L}_\Xi\beta = \beta$ meaning, since $\beta$ is closed, $\gamma :=\iota_{\Xi}\beta$ is a primitive for $\beta$.  Since $\mathcal{L}_\Xi \gamma = \iota_\Xi(d\gamma)=\gamma$, it follows that $\rho_\lambda^*\gamma = e^\lambda\gamma$.
\end{proof}

\begin{rem}
Note that the proof of Lemma \ref{l:hstbdiffext} may be reversed to conclude that, for $\gamma$ satisfying $\rho_\lambda^*\gamma=e^\lambda\gamma$ for all $\lambda\in \R$, $(\pi:P\to W,\omega+\pi^*d\gamma)$ is a homogeneous symplectic toric bundle over $\psi$.
\end{rem}

We now prove that adding an exact form satisfying the conditions of Lemma \ref{l:hstbdiffext} does not change our isomorphism class in $\stb_\psi(W)$.

\begin{lemma}\label{l:homogmoser}
Let $\psi:W\to \fg^*$ be a homogeneous unimodular local embedding and let $(\pi:P\to W,\omega)$ be a homogeneous symplectic toric bundle. Let $\gamma$ be a 1-form on $W$ satisfying $\rho_\lambda^*\gamma = e^\lambda\gamma$ for every real $\lambda$ with action diffeomorphism $\rho_\lambda$. Then there is an isomorphism of homogeneous symplectic toric bundles
\[\varphi : (\pi : P \to W,\omega) \to (\pi : P \to W,\omega + \pi^*d\gamma)\]
\end{lemma}
\begin{proof}
We will repeat the proof of Lemma 3.3 of \cite{KarshonLerman} with the addition of an $\R$ action; for the convenience of the reader, we will sketch the borrowed details. To build the map $f$, we
can use Moser’s Method on the family of symplectic forms
\[\omega_t = \omega + t\pi^*d\gamma,\; t \in[0,1].\]
Then there is a unique time-dependent vector field $X_t$ on $P$ satisfying
\begin{equation}\label{eq:Moser} \iota_{X_t}\omega_t = -\pi^*\gamma.\end{equation}
Since $X_t$ is $G$-invariant and tangent to the compact fibers of $\pi$, the time 1 flow $\varphi:P\to P$ of $X_t$ exists and is $G$-equivariant. Therefore, $\pi\circ \varphi =\pi$ and, as is standard with Moser’s Method (see \cite{Moser}), $\varphi$ satisfies
\[\varphi^*(\omega+\pi^*d\gamma) = \varphi^*(\omega_1)=\omega_0=\omega\]

It remains to be shown that this bundle isomorphism is $\R$-equivariant. It is enough to show that the vector field $X_t$ is $\R$-invariant. Fix $\lambda\in \R$ with action diffeomorphism $\rho_\lambda$. Since $\rho^*_\lambda=e^\lambda\omega$ and $\rho^*_\lambda(\pi^*\gamma)=e^\lambda(\pi^*\lambda)$, $\omega_t$ must also satisfy $\rho_\lambda^*\omega_t=e^\lambda\omega_t$ for all $t$.  So
\[\iota_{(\rho_\lambda)_*X_t}\omega_t=\rho^*_{-\lambda}(\iota_{X_t}e^\lambda\omega_t)=e^\lambda\rho^*_{-\lambda}(-\pi^*\gamma)=e^\lambda e^{-\lambda}(-\pi^*\gamma)=-\pi^*\gamma\]
Thus, since the equality \ref{eq:Moser} uniquely determines $X_t$, $(\rho_\lambda)_*X_t=X_t$ meaning $X_t$ is $\R$-invariant and therefore $\varphi$ is $\R$-equivariant.
\end{proof}

From the previous two lemmas, we may easily conclude the following proposition:

\begin{prop}\label{p:hstbnohorclass}
Let $\psi:W\to \fg^*$ be a homogeneous unimodular local embedding. Then $(\pi:P\to W,\omega)$ and $(\pi':P'\to W,\omega')$ are isomorphic homogeneous symplectic toric bundles over $\psi$ if and only if $P$ and $P'$ are isomorphic homogeneous principal $G$-bundles over $W$.
\end{prop}

\begin{proof}
Because an isomorphism in $\hstb_\psi(W)$ is in particular an $\R$-equivariant isomorphism of principal $G$-bundles, one direction is given by definition.

Suppose there exists an isomorphism $\varphi : P \to P'$ of homogeneous principal $G$-bundles over $W$. By Lemma \ref{l:hstbdiffext}, the difference $\varphi^*(\omega')-\omega$ is basic and, for $\varphi^*(\omega')-\omega = \pi^*\beta$, $\beta$ has primitive $\gamma$ satisfying $\rho_\lambda^*\gamma = e^\lambda \gamma$ for $\rho_\lambda$ the action diffeomorphism associated to each $\lambda\in \R$.  By Lemma \ref{l:homogmoser}, there exists an $\R$-equivariant bundle isomorphism $\phi:P\to P$ with $\phi^*(\omega+d(\pi^*\gamma))=\omega$.  Therefore, $\varphi\circ \phi$ is an isomorphism of homogeneous symplectic toric bundles. 
\end{proof}

%%%%%%%%%%%%%%%%%%%%%%%%%%%%%%%%%%%%%%%%%
\section{The morphism of presheaves $\hc:\hstb_\psi\to \stc_\psi$}\label{s:hc}
%%%%%%%%%%%%%%%%%%%%%%%%%%%%%%%%%%%%%%%%%

In this section, we introduce a functor $\hc :\stb_\psi(W)\to \stc_\psi(W)$. We then show that $\hc$ is an equivalence of categories; in fact, thinking of $\hstb_\psi$ and $\stc_\psi$ as presheaves over $\open_{\R}(W)$, $\hc$ is an isomorphism of presheaves. This functor is the homogeneous version of the equivalence of categories $\cut:\stb_\psi(W)\to \stm_\psi(W)$ of \cite{KarshonLerman} (see Section \ref{s:KL}).

We first verify that when the functor $\cut$ is applied to a homogeneous symplectic toric bundle, the $\R$ action on the bundle descends to an $\R$ action on the resulting symplectic toric
manifold inducing the structure of a symplectic toric manifold.

\begin{prop}\label{p:hstbtostcobjects}
Let $\psi:W\to \fg^*$ be a homogeneous unimodular local embedding and let $(\pi:\to W,\omega)$ be a homogeneous symplectic toric bundle over $\psi$. Then, regarding $(P,\omega)$ as a symplectic toric bundle over $\psi$, the symplectic toric manifold $\cut(P,\omega)$ inherits an $\R$ action from $(P,\omega)$ with respect to which $\cut(P,\omega)$ is a symplectic toric cone over $\psi$.
\end{prop}

\begin{proof}
Fix an element $w\in W$. Let $C_w:=C_{\{v_1,\ldots,v_k\},\psi(w)}$ be the unimodular cone with vertex $\psi(w)$ onto which $\psi$ is an open embedding near $w$.  Let $\fk$ be the Lie subalgebra of $\fg$ for which $\{v_1,\ldots,v_k\}$ is a basis and let $K_w \leq G$ be the corresponding subtorus. As before, let $\iota:\fk\to \fg$ be the inclusion with dual $\iota^*:\fg^*\to \fk^*$.  By Lemma \ref{l:Rinvunimconeembeddingnbds}, $\psi(w)\in \fk^o$ so $\iota^*(\psi(w))=0$.  As in Construction \ref{const:cuts}, define the cone $C'_w$ by
\[C'_w:=\{\xi\in \fk^*\,|\, \langle \xi,v_i\rangle\geq 0,\,1\leq i \leq k\}.\]
Again using Lemma \ref{l:Rinvunimconeembeddingnbds}, we may choose an $\R$-invariant neighborhood $U_w$ of $w$ on which $\psi$ is an open embedding into $C_w$.  It follows that, after making a choice of section of $\iota^*:\fg^*\to\fk^*$, we may make an $\R$-equivariant identification $C_w\cong \fk^o\times C'_w$.  By shrinking $U_w$ as necessary, one may choose a contractible $\R$-invariant open subset $\mathcal{U}$ of $\fk^o$ and a contractible open neighborhood $\mathcal{V}$ of $0$ in $\fk^*$ so that, for $\mathcal{V}':=C'_w\cap \mathcal{V}$ the above identification restricts to the identification $U_w\cong \mathcal{U}\times \mathcal{V}'$.  Again, as in Construction \ref{const:cuts}, we may conclude that, for $\nu:=\iota^*\circ \psi\circ \pi$, there exists a manifold without boundary $\tilde{P}$ containing $P|_{U_w}$ as a domain and an extension of the map $\nu:P|_{U_w}\to \mathcal{V}'$ to a smooth map $\tilde{\nu}:\tilde{P}\to \mathcal{V}$.

Now, let $\rho:K_w\to \mathrm{Sp}\left(\C^k,\omega_{\C^k}\right)$ be the symplectic representation with weights $\{v^*_1,\ldots,v^*_k\}$ (for $\omega_{\C^k}= \frac{\sqrt{-1}}{2\pi}\sum dz_j\wedge d\bar{z}_j$ and action as in equation \eqref{eq:symprep}). We fix the moment map
\[\mu_w:\C^k\to \fk^*,\;\;\; \mu_w(z_1,\ldots,z_k):=-\sum_{j=1}^k |z_j|^2v_j^*\]
for this space.   Then the $K_w$ action on $(P|_{U_w}\times \C^k,\omega\oplus\omega_{\C^k})$ has moment map $\Phi(p,z):=\nu(p)+\mu_w(z)$ which has extension $\tilde{\Phi}(p,z):=\tilde{\nu}(p)+\mu_w(z)$ to the domain $\tilde{P}\times \C^k$ containing $P|_{U_w}\times \C^k$.

The condition $(p,z)\in \tilde{\Phi}^{-1}(0)$ imposes that $\tilde{\nu}(p)=-\mu_w(z)$, meaning the image of $\tilde{\nu}$ must be contained in $C'_w$. It therefore follows that $\Phi^{-1}(0)=\tilde{\Phi}^{-1}(0)$. Thus, by Theorem \ref{t:reduction}, the reduction $(P|_{U_w}\times\C_k)//_0\, K_w$ is a symplectic manifold.

As we’ve proceeded using the same method as in Construction \ref{const:cuts}, it follows we may use the homeomorphisms defined in Construction \ref{const:transitions} to symplectize $\ctop(P,\omega)$. To finish, we need only show that there are compatible smooth $\R$ actions on each $(P|_{U_w}\times \C^k)//_0\, K_w$ with respect to which the inherited symplectic form on $\cut(P,\omega)$ is homogeneous.

So let $\R$ act on $P|_{U_w}\times \C^k$ via the diagonal action given by the $\R$ action on $P$ restricted to $P|_{U_w}$ and via the ``half-radial
action'' on $\C^k$ : the action $t\cdot z:=e^{\frac{1}{2}t}z$.  $\mu_w:\C^k\to \fk^*$ is homogeneous with respect to this action of $\R$ on $\C^k$ and, as $\nu : P|_{U_w}\to \fk^*$ is also homogeneous, it follows that the action of $\R$ preserves the level set $\Phi^{-1}(0)$. Since the actions of $K_w$ and $\R$ commute, the action of $\R$ descends to a smooth action on $(P|_{U_w} \times \C^k )//_0\, K_w$.  After checking that the transition homeomorphisms (again, as built in Construction \ref{const:transitions})
\[\alpha^P_w:\ctop(P|_{U_w})\to (P|_{U_w}\times \C^k)//_0\, K_w\]
are $\R$-equivariant with respect to the action of $\R$ on $\ctop(P,\omega)$ descending from $P$ and the above described action of $\R$ on $(P|_{U_w}\times \C^k)//_0\, K_w$, we may conclude that the induced $\R$ action on $\cut(P,\omega)$ is in fact a smooth action.

Finally, we check that the symplectic form $\eta$ on $\cut(P,\omega)$ satisfies $\rho_\lambda^*\eta=e^\lambda\eta$ for every action diffeomorphism $\rho_\lambda$ associated to $\lambda\in \R$.  On the open dense interior $\mathring{W}$ of $W$, the functor $\cut$ is the identity (see Remark \ref{r:coninterior}); i.e., for an open subset $U\subset \mathring{W}$, $(P|_U,\omega,\pi : P|_U\to U) = \cut(P,\omega)|_U$ as symplectic toric manifolds over $\psi|_U$. Thus,
\[\rho_\lambda^*(\eta|_U)=\rho_\lambda^*(\omega|_U)=e^\lambda\omega|_U=e^\lambda\eta|_U\]
As this identity holds on the open dense subset $\cut(P,\omega)|_{\mathring{W}}$ of $\cut(P,\omega)$, it follows this holds over all $\cut(P,\omega)$. Therefore, the above action of $\R$ on $\cut(P,\omega)$ renders $\cut(P,\omega)$ a symplectic toric cone (the properness of the $\R$ action on $M$ is ensured by Proposition \ref{p:quickstcdemo}).
\end{proof}

\begin{defn}\label{d:hc}
Let $\psi:W\to \fg^*$ be a homogeneous unimodular local embedding. Then $\hc:\hstb_\psi(W)\to \stc_\psi(W)$ is the functor taking a homogeneous symplectic toric bundle $(P,\omega)$ to the symplectic toric manifold $\cut(P,\omega)$ with $\R$ action inherited from $(P,\omega)$, as outlined in Proposition \ref{p:hstbtostcobjects}.  For a morphism $\varphi:(P,\omega)\to (P',\omega')$, let $\hc(\varphi):= \cut(\varphi)$. Since $\varphi$ is $(G\times\R)$-equivariant, $\hc(\varphi)$ is $(G\times\R)$-equivariant as well. As with $\cut$, $\hc : \hstb_\psi\to \stc_\psi$
is a map of presheaves (naturality of $\hc$ follows from naturality of $\cut$).
\end{defn}

To begin showing $\hc$ is an isomorphism of presheaves, we first show that $\hc$ is fully faithful.

\begin{lemma}\label{l:hcfullyfaithful}
Let $\psi:W\to \fg^*$ be a homogeneous unimodular local embedding. Then for every $\R$-invariant open subset $U$ of $W$, the functor $\hc_U : \hstb_\psi(U)\to \stc_\psi(U)$ is fully faithful.
\end{lemma}

\begin{proof}
As $\psi|_U:U\to \fg^*$ is also a homogeneous unimodular local embedding and the groupoid $\hstb_\psi(U)$ is, by definition, the groupoid $\hstb_{\psi|_U}(U)$, we need only worry
about the case of $U = W$ as this will generalize to any $\R$-invariant open subset $U \subset W$.

Note we have the following commutative diagram
\begin{equation}\label{eq:hcfunctdiagram}\xymatrix{\hstb_\psi(W) \ar[r]^{\iota_h} \ar[d]_{\hc} & \stb_\psi(W) \ar[d]^{\cut} \\ \stc_\psi(W) \ar[r]_{\iota_c} & \stm_\psi(W)}\end{equation}
where $\iota_h$ and $\iota_c$ are the forgetful functors.  $\iota_h$ and $\iota_c$ are both faithful (the maps in the source category for both functors are distinguished only by $\R$-equivariance).  Therefore, since $\cut$ and $\iota_h$ are faithful and $\cut\circ \iota_h = \iota_c\circ \hc$, $\iota_c\circ \hc$ is faithful; so since $\iota_c$ is faithful, $\hc$ is also faithful.

Now we show $\hc$ is full. Fix two homogeneous symplectic toric bundles $(P,\omega)$ and $(P',\omega')$ in $\hstb_\psi(W)$. Let 
\[f:\hc(\pi:P\to W,\omega) \to \hc(\pi':P'\to W,\omega')\]
be a map of symplectic toric cones. Using diagram \eqref{eq:hcfunctdiagram} and the fullness of $\cut$, it follows there is a map of symplectic toric bundles
\[\varphi:\iota_h(P,\omega)\to \iota_h(P',\omega')\]
satisfying $\cut(\varphi)=\iota_c(f)$.

Let $d:P'\times_{\pi',W,\pi'}P'\to G$ be the division map for $P'$: the smooth map with $d(p,p')$ the unique element of $G$ such that $p\cdot d(p,p')=p'$ for any $p,p'\in P'$ with $\pi'(p)=\pi'(p)$.  For each element $t\in \R$, define
\[\tilde{\varphi}_t:P\to G,\;\;\; \tilde{\varphi}_t(p):=d(\varphi(t\cdot p), t\cdot \varphi(p))\]
Recall that $c|_{\mathring{W}}$ is the identity on the open dense top stratum $\mathring{W}\subset W$ (see Remark \ref{r:coninterior}).  So $\cut(\varphi|_{\mathring{W}})=\iota_c(f|_{\mathring{W}})$ is $\R$-equivariant. Therefore, we may conclude that, for every $t\in \R$, $\tilde{\varphi}_t \equiv e$ on $P|_{\mathring{W}}$ (for $e\in G$ the identity element).  $\tilde{\varphi}_t$ is continuous and constant on the open dense subset $P|_{\mathring{W}}$ of $P$, it follows that $\tilde{\varphi}_t \equiv e$ on all $P$ and so $\varphi$ is $\R$-equivariant and therefore
actually a map of homogeneous symplectic toric bundles $\varphi\in \hstb_\psi(W)$.  Using diagram \eqref{eq:hcfunctdiagram} once more, we may conclude that $\hc(\varphi)=f$. Hence, $\hc$ is full.
\end{proof} 

We require two more lemmas before we can use Lemma \ref{l:isoofstackstechlemma} to prove that $\hc:\stb_\psi\to \stc_\psi$ is an isomorphism of presheaves.

\begin{lemma}\label{l:stclocconnected}
Let $\psi:W\to \fg^*$ be a homogeneous unimodular local embedding. Then any two symplectic toric cones over $\psi$ are locally isomorphic; explicitly, for $(M,\omega,\pi:M\to W)$ and $(M',\omega',\pi':M'\to W)$ two symplectic toric cones over $\psi$, there is an open cover $\{U_\alpha\}_{\alpha\in A}$ of $W$ by $\R$-invariant open subsets and a collection of isomorphisms
\[\{\varphi_\alpha : (M,\omega,\pi)|_{U_\alpha} \to (M', \omega',\pi')|_{U_\alpha} \in \stc_\psi(U_\alpha)\}_{\alpha\in A}.\]
\end{lemma}

As the proof involves a number of known results about the relationship between symplectic toric cones and contact toric manifolds, we relegate the proof of Lemma \ref{l:stclocconnected} to Appendix \ref{app:stcs}.

\begin{lemma}\label{l:stcaprestack}
Let $\psi:W\to \fg^*$ be a homogeneous unimodular local embedding. Then the presheaf $\stc_\psi: \op{\open_\R(W)}\to \gpoid$ is a prestack (see Definition \ref{d:prestack}).
\end{lemma}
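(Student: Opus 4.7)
The goal is to verify that for any $\R$-invariant open $U \subset W$ and any two objects $X = (M,\omega,\pi:M\to U)$ and $X' = (M',\omega',\pi':M'\to U)$ in $\stc_\psi(U)$, the presheaf of sets
\[V \mapsto \Hom_{\stc_\psi(V)}(X|_V, X'|_V)\]
on $\open_\R(U)$ is a sheaf. The plan is to carry out the standard morphism-gluing argument, showing that the sheaf condition reduces entirely to local properties of smooth maps.

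Fix an open cover $\{U_\alpha\}_{\alpha\in A}$ of $U$ by $\R$-invariant open subsets, and suppose we have morphisms $\varphi_\alpha : X|_{U_\alpha} \to X'|_{U_\alpha}$ in $\stc_\psi(U_\alpha)$ that agree on pairwise overlaps: $\varphi_\alpha|_{U_\alpha\cap U_\beta} = \varphi_\beta|_{U_\alpha\cap U_\beta}$. I first handle uniqueness: if $\varphi, \varphi'$ are two morphisms in $\stc_\psi(U)$ both restricting to $\varphi_\alpha$ on each $U_\alpha$, then the underlying maps agree on each $\pi^{-1}(U_\alpha)$; since $\pi$ is surjective onto $U$ and $\{U_\alpha\}$ covers $U$, the preimages $\{\pi^{-1}(U_\alpha)\}$ cover $M$, so $\varphi = \varphi'$.

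For existence, I define $\varphi : M \to M'$ pointwise by $\varphi(p) := \varphi_\alpha(p)$ whenever $p \in \pi^{-1}(U_\alpha)$; the compatibility hypothesis makes this well-defined as a set-theoretic map, and the open cover $\{\pi^{-1}(U_\alpha)\}$ of $M$ shows that smoothness, $G$-equivariance, $\R$-equivariance, the symplectic condition $\varphi^*\omega' = \omega$, and the identity $\pi' \circ \varphi = \pi$ all hold globally because they each hold on every $\pi^{-1}(U_\alpha)$. The same local argument applied to the smooth maps $\varphi_\alpha^{-1}$ produces a smooth inverse, so $\varphi$ is a diffeomorphism. Thus $\varphi$ is a morphism in $\stc_\psi(U)$ restricting to each $\varphi_\alpha$.

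There is no real obstacle here: every defining condition on a morphism of $\stc_\psi$ is local on the domain, and the restriction functors $\rho_{UU_\alpha}$ were set up precisely so that the underlying space of $X|_{U_\alpha}$ is the open subset $\pi^{-1}(U_\alpha) \subset M$. The only point that merits explicit mention is the $\R$-invariance of the cover, which is automatic from $U_\alpha \in \open_\R(U)$ and which guarantees that the restrictions $X|_{U_\alpha}$ are themselves symplectic toric cones in the sense of Definition~\ref{d:symptoriccone} (the $\R$-action on $M$ preserves each $\pi^{-1}(U_\alpha)$). Hence the morphism presheaf satisfies the sheaf axioms, and $\stc_\psi$ is a prestack.
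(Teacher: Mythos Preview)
Your proof is correct and follows essentially the same approach as the paper's: both verify the sheaf condition for $\underline{\sf Hom}(X,X')$ by gluing a coherent family of local isomorphisms into a single smooth map and then checking that the symplectic, $(G\times\R)$-equivariance, and quotient-compatibility conditions are local, with the inverse handled the same way. Your explicit remark on why the $\R$-invariance of the cover matters is a nice addition but does not change the argument.
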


\begin{proof}
To show $\stc_\psi$ is a prestack, we must show that, for every $\R$-invariant open subset $U$ of $W$ and for any two symplectic toric cones $(M,\omega,\pi:M\to U)$ and $(M',\omega',\pi'\:M'\to U)$ in $\stc_\psi(U)$, the presheaf
\[\mathrm{Hom}((M,\omega,\pi),(M',\omega',\pi')):\op{\open_{\R}(U)}\to \set,\;V\mapsto \mathrm{Hom}_{\stc_\psi}((M,\omega,\pi)|_V,(M',\omega',\pi')|_V)\]
is a sheaf of sets. It is routine to check that the additional properties imposed on the smooth maps of $\stc_\psi$ are local. 
\end{proof}

We may put together the results of this section to prove that $\hc$ is an isomorphism of presheaves.

\begin{thm}\label{t:hciso}
Let $\psi:W\to \fg^*$ be a homogeneous unimodular local embedding. Then $\hc :\hstb_\psi\to \stc_\psi$ is an isomorphism of presheaves.
\end{thm}

\begin{proof}
We have from Lemma \ref{l:stcaprestack} that $\stc_\psi$ is a prestack and from Proposition \ref{p:hstbastack} that $\hstb_\psi$ is a stack. To see that $\hstb_\psi(U)$ is non-empty for every open $\R$-invariant subset $U$ of $W$, note that, in Proposition \ref{p:homogconn}, we showed that every homogeneous principal $G$-bundle comes with a symplectic form with respect to which it is a homogeneous symplectic toric bundle; in particular, then, the trivial principal $G$-bundle $U\times G\to U$ comes with a symplectic form with respect to which it is a homogeneous symplectic toric bundle over $\psi|_U$.  

From Lemma \ref{l:stclocconnected}, we have that, for any open $\R$-invariant subset $U$ of $W$, any two elements in the groupoid $\stc_\psi(U)$ are locally isomorphic; in other words, $\stc_\psi$ is transitive. Finally, from Lemma \ref{l:hcfullyfaithful}, we have that $\hc_U:\hstb_\psi(U)\to \stc_\psi(U)$ is fully faithful for each $U$.

Thus, $\hstb_\psi$, $\stc_\psi$, and $\hc$ satisfy all the hypotheses of Lemma \ref{l:isoofstackstechlemma} (see also Remark \ref{r:isoofstackstechlemmarem}) and so we may conclude that $\hc$ is an isomorphism of presheaves. 
\end{proof}

%%%%%%%%%%%%%%%%%%%%%%%%%%%%%%%%%%%%%%%%%
\section{Characteristic classes for symplectic toric cones}\label{s:stccharclasses}
%%%%%%%%%%%%%%%%%%%%%%%%%%%%%%%%%%%%%%%%%

In this section, we prove that the collection of symplectic toric cones over a particular homogeneous unimodular local embedding $\psi:W\to \fg^*$ are naturally isomorphic the classes
$H^2(W,\Z_G)$. First, we set some notation.

\begin{notation}
 Given two categories $\mathcal{C}$ and $\mathcal{D}$ and a functor $F : \mathcal{C} \to \mathcal{D}$, denote by $\pi_0 \mathcal{C}$ and $\pi_0 \mathcal{D}$ the collections of isomorphism classes of $\mathcal{C}$ and $\mathcal{D}$, respectively, and denote by  $\pi_0 F :\pi_0 \mathcal{C} \to \pi_0 \mathcal{D}$ the function $\pi_0F([c]) := [F(c)]$ for each class $[c] \in\pi_0 \mathcal{C}$. Note $\pi_0 F$ is well-defined as $F$ is a functor.
\end{notation}

\begin{rem}\label{r:pizeroofafunctor}
For $X$ a topological space, suppose $\mathcal{F} : \op{\open(X)}\to \gpoid$ is a presheaf of groupoids. Then there is a presheaf of sets $\pi_0\mathcal{F} : \op{\open(X)}\to \set$ with $\pi_0\mathcal{F}(U) := \pi_0 (\mathcal{F}(U))$ for every open subset $U$ of $X$.
\end{rem}

We now show how $\hstb_\psi$ is related to a relatively simple presheaf of groupoids to classify.

\begin{prop}\label{p:hstbequalshomogbundles}
Let $\psi:W\to \fg^*$ be a homogeneous unimodular local embedding. Then for $R : \hstb_\psi\to\BG_\R$ the forgetful map of presheaves, $\pi_0 R : \pi_0\hstb_\psi \to \pi_0\BG_\R$ is an isomorphism of presheaves of sets over $\open_\R(W)$.
\end{prop}

\begin{proof}
Note that, for any $\R$-invariant subset $U$ of $W$, since $\BG_{\R} (U)$ is a groupoid, it is enough to show that $R_U : \hstb_\psi(U) \to \BG_{\R}(U)$ is essentially surjective and, for any two
homogeneous symplectic toric bundles $(\pi:P\to U,\omega)$ and $(\pi':P'\to U,\omega')$, $R(P,\omega)$ and $R(P',\omega')$ are isomorphic only if $(P,\omega)$ and $(P',\omega')$ are isomorphic.

The former fact is demonstrated by Proposition \ref{p:homogconn}: for any bundle $\pi:P\to U$ of $\BG_{\R}$, there exists a connection $A$ with respect to which $(P,d\langle\psi\circ \pi,A\rangle)$ is a homogeneous symplectic toric bundle. The latter fact is demonstrated by Proposition \ref{p:hstbnohorclass}: any two homogeneous symplectic toric bundles $(P,\omega)$ and $(P',\omega')$ are isomorphic in $\hstb_\psi(U)$ if and only if the underlying bundles are isomorphic in $\BG_{\R}(U)$. 
\end{proof}

Before we can finish, we need the following well-known theorem.

\begin{thm}\label{t:chernclasses}
Let $M$ be a manifold with corners and let $\BG(M)$ be the category of principal $G$-bundles over $M$ with morphisms isomorphisms of principal $G$-bundles. For $G$ our torus and $\Z_G$ the integral lattice of $\fg$ (that is, the kernel of $\exp : \fg \to G$), the function:
\[\chern:\pi_0\BG(M)\to H^2(M,\Z_G),\]
with $\chern([P])$ the first Chern class of $P$, is a bijection.
\end{thm}

\begin{rem}
By the naturality of characteristic classes, we may extend the bijection in Theorem \ref{t:chernclasses} to an isomorphism of presheaves of sets.  As explained in Remark \ref{r:pizeroofafunctor}, since $\BG : \op{\open(M)}\to \gpoid$ is a presheaf of groupoids, we get a presheaf $\pi_0\BG:\op{\open(M)}\to \set$.  So, for $H^2(\cdot,\Z_G) : \op{\open_{\R}(W)}\to \set$ the presheaf of sets $U\mapsto H^2(U,\Z_G)$, we have the isomorphism of presheaves of sets
\[\chern : \pi_0 \BG\to H^2(\cdot,\Z_G)\]
\end{rem}

Since we’ve been using the site of $\R$-invariant open subsets of $W$ and the presheaf $\BG_{\R}$ of principal $G$-bundles with free and proper $\R$ actions, we still need to show the isomorphism $\chern$ given above descends to $\open_{\R}(W)$.

\begin{prop}\label{p:homogchern}
For $\psi:W\to \fg^*$ a homogeneous unimodular local embedding, the isomorphism of presheaves $\chern:\pi_0\BG\to H^2(\cdot,\Z_G)$ descends to an isomorphism of presheaves $(\chern)_{\R}:\BG_{\R}\to H^2(\cdot,\Z_G)|_{\open_{\R}(W)}$.
\end{prop}

\begin{proof}
Since we are restricting the isomorphism $\chern$, we only need to show that $(\chern)_{\R}$ is still surjective. Fix a class $[\alpha] \in H^2 (U,\Z_G)$. Since the quotient map $q : U \to U/\R$ is a principal $\R$-bundle, it has a global section $i : U/\R \to U$. $i$ and $q$ establish homotopy equivalence between $U$ and $U/\R$.  

Let $\pi : P \to U/\R$ be a principal $G$-bundle with $\chern(P)=\iota^*[\alpha]$. Then $q^*P\cong P\times \R$ is an element of $\BG_{\R}(U)$ (where $P \times \R$ inherits the trivially extended action of $\R$). Hence,
\[\chern(q^*P)=q^*(\chern(P))=q^*(\iota^*([\alpha]))=[\alpha]\]
\end{proof}

Now we may classify homogeneous symplectic toric bundles over a homogeneous unimodular local embedding $\psi:W\to \fg^*$.

\begin{prop}\label{p:homogcherniso}
Let $\psi:W\to \fg^*$ be a homogeneous unimodular local embedding. Then, for $H^2(\cdot,\Z_G) : \op{\open_{\R}(W)}\to \set$ the presheaf of sets $U\mapsto H^2(U,\Z_G)$, there is an isomorphism of presheaves:
\[{\sf chern}:\pi_0\hstb_\psi\to H^2(\cdot,\Z_G) \]
\end{prop}

\begin{proof}
Recall we have isomorphisms of presheaves $\pi_0 R : \pi_0 \hstb_\psi\to \pi_0 \BG_{\R}$ of Proposition \ref{p:hstbequalshomogbundles} and $(\chern)_{\R}:\pi_0\BG_{\R}\to H^2(\cdot,\Z_G)$ from Proposition \ref{p:homogchern}.  Therefore, the composition ${\sf chern}:=(\chern)_{\R}\circ \pi_0 R$ is an isomorphism of presheaves.
\end{proof}

We may now prove our first main classification which we restate for the convenience of the reader.
\\

\noindent {\bf Theorem \ref{t:stcclassify}:}  Let $\psi:W\to \fg^*$ be a homogeneous unimodular local embedding. Then the set of isomorphism classes of symplectic toric cones $(M,\omega,\mu)$ with $G$-quotient $\pi:M\to W$ and orbital moment map $\psi$ is in natural bijective correspondence with the cohomology classes $H^2(W,\Z_G)$, where $\Z_G$ is the integral lattice of $G$, the kernel of the map $\exp:\fg \to G$. 
\\

\begin{proof}
This natural bijective correspondence arises from the composition of isomorphisms of presheaves: $(\pi_0 \hc)^{-1}:\pi_0 \stc_\psi \to \pi_0\hstb_\psi$ (see Theorem \ref{t:hciso}) and ${\sf chern}:\pi_0\hstb_\psi\to H^2(\cdot,\Z_G)$ (see Proposition \ref{p:homogcherniso}).
\end{proof}

We have an easy corollary.

\begin{cor}
Suppose a symplectic toric cone $(M,\omega)$ with orbital moment map $\bar{\mu}:M/G\to \fg^*$ satisfies $H^2(M/G,\Z) = 0$. Then $(M,\omega)$ is $(G \times \R)$-equivariantly symplectomorphic to every other symplectic toric cone admitting quotient space $M/G$ and orbital moment map $\bar{\mu}$.
\end{cor}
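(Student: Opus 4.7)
The corollary is a direct application of Theorem~\ref{t:stc}, so the proof proposal is short. The plan is to package the hypotheses as an instance of the theorem and read off the conclusion.

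First, I would set $W := M/G$ and $\psi := \bar{\mu}:W \to \fg^*$. By Proposition~\ref{p:hule}, since $(M,\omega,\mu)$ is a symplectic toric cone, the space $W$ is naturally a manifold with corners equipped with a free and proper $\R$-action, and $\psi$ is a homogeneous unimodular local embedding. Thus the groupoid $\stc_\psi(W)$ is defined, and by hypothesis both $(M,\omega)$ and any other symplectic toric cone $(M',\omega')$ with the same $G$-quotient $W$ and orbital moment map $\psi$ determine objects of $\stc_\psi(W)$.

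Next, I would invoke Theorem~\ref{t:stc}: the set $\piz \stc_\psi(W)$ of isomorphism classes of symplectic toric cones over $\psi$ is in natural bijection with $H^2(W,\Z_G) = H^2(M/G,\Z_G)$. By the assumption $H^2(M/G,\Z_G) = 0$, this set is a singleton. Since $(M,\omega)$ and $(M',\omega')$ are both objects of $\stc_\psi(W)$, they must represent the same class, i.e.\ they are isomorphic in $\stc_\psi(W)$.

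Finally, I would unpack what isomorphism in $\stc_\psi(W)$ means: a morphism in this groupoid is a $(G\times\R)$-equivariant symplectomorphism covering the identity on $W$. In particular, $(M,\omega)$ and $(M',\omega')$ are $(G\times \R)$-equivariantly symplectomorphic, which is exactly the statement of the corollary. No step here is an obstacle; the entire content is already concentrated in Theorem~\ref{t:stc}, and the corollary is simply the observation that triviality of the classifying cohomology forces uniqueness.
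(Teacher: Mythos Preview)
Your proposal is correct and matches the paper's treatment exactly: the paper states this as ``an easy corollary'' of Theorem~\ref{t:stc} with no further proof, and your argument is precisely the unpacking one would supply.
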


Additionally, note that a classification of symplectic toric manifolds automatically descends to a classification for contact toric manifolds.
\\

\noindent {\bf Corollary \ref{c:ctm}:} Let $\psi:W\to \fg^*$ be a homogeneous unimodular local embedding. Then the set of isomorphism classes of contact toric manifolds with symplectization $(M,\omega,\mu)$ having $G$-quotient $\pi:M \to W$ and orbital moment map $\psi$ is in natural bijective correspondence with the cohomology classes $H^2(W,\Z_G)$.
\\

\begin{rem}
Strictly speaking, Lerman in \cite{LermanCTM} proved this result in the case where $\psi$ is known to be the orbital moment map of the symplectization of a contact toric manifold.
However, our result has established exactly what form these orbital moment maps take.
\end{rem}

%%%%%%%%%%%%%%%%%%%%%%%%%%%%%%%%%%%%%%%%%
\part{Classifying symplectic toric stratified spaces with isolated singularities}\label{part:stss}
%%%%%%%%%%%%%%%%%%%%%%%%%%%%%%%%%%%%%%%%%

The goal of this part is to describe and classify symplectic toric stratified spaces with isolated singularities. To begin, we describe in Section \ref{s:singstc} {\em singular symplectic toric cones}: these are symplectic toric cones with an added point at infinity (see Definition \ref{d:singsymptoriccone}). These spaces serve as the local model for symplectic toric stratified spaces with isolated singularities.

In Section \ref{s:stss}, We define and describe {\em symplectic toric stratified spaces with isolated singularities}. These are (roughly) stratified spaces with torus actions locally modeled on singular symplectic toric cones (see Definition \ref{d:symptorstratspace}). We will see that, for such a space $(X,\omega,\mu:X\to \fg^*)$, the topological quotient $X/G$ inherits the structure of a {\em cornered stratified space with isolated singularities}; essentially, a stratified space for which the strata are allowed to be manifolds with corners. Furthermore, the moment map $\mu:X\to \fg^*$ descends to a {\em stratified unimodular local embedding} $\bar{\mu}:X/G\to \fg^*$ (see Definition \ref{d:sule}). This is the
continuous extension of a unimodular local embedding to a cornered stratified space which additionally look like homogeneous unimodular local embeddings near each singularity.

As isomorphic symplectic toric stratified spaces must have isomorphic orbit spaces and intertwined orbital moment maps, we group the spaces $(X,\omega,\mu)$ with fixed $G$-quotient maps $\pi:X\to W$ and orbital moment map the stratified unimodular local embedding $\psi:W\to \fg^*$ together into a groupoid $\stss_\psi(W)$, {\em the groupoid of symplectic toric stratified spaces over $\psi$} (see Definition \ref{d:stss}). The morphisms of this groupoid are $G$-equivariant strata preserving homeomorphisms restricting to symplectomorphisms on the open dense strata which intertwine $G$-quotient maps. As in the case of symplectic toric cones, we may form a presheaf of groupoids $\stss_\psi:\op{\open(W)}\to \gpoid.$

In Section \ref{s:cstb}, we define {\em conical symplectic toric $G$-bundles over $\psi$}. These are principal $G$-bundles $\pi:P\to \reg{W}$ for $\reg{W}$ the open dense stratum of $W$ with $G$-invariant symplectic forms for which $\psi\circ \pi$ is a moment map that satisfy a special ``conical'' condition (see Definition \ref{d:cstb}). Together with $G$-equivariant symplectomorphic bundle isomorphisms, these form a groupoid $\cstb_\psi(W)$, {\em the groupoid of conical symplectic toric $G$-bundles over $\psi$}. As in the case of homogeneous symplectic toric bundles, the collection of groupoids $\cstb_{\psi|_U}(U)$ forms a presheaf of groupoids $\cstb_\psi:\op{\open(W)}\to \gpoid$.

In Section \ref{s:ssc}, we build a map of presheaves $\ssc:\cstb_\psi\to \stss_\psi$ adapted from the equivalence of categories $\cut$ presented by Karshon and Lerman in \cite{KarshonLerman} (see Section \ref{s:KL}). As in the case of $\cut$ and $\hc:\hstb_\psi\to \stss_\psi$ of Part \ref{part:stc}, we are also able to show that $\ssc$ is an isomorphism of presheaves (Theorem \ref{t:sscaniso}).

In Section \ref{s:ccforstss}, we show that the characteristic classes of Karshon and Lerman yield characteristic classes for symplectic toric stratified spaces via the restriction of a symplectic toric stratified space to its dense top stratum. In Proposition \ref{p:conicalboundarycond}, we establish that, for any principal bundle $\pi:P\to \reg{W}$, the Chern class of $P$ establishes a collection of boundary conditions near each singularity that a conical symplectic toric bundle must satisfy.  Via the isomorphism of presheaves $\ssc$, this establishes a natural bijection between the isomorphism of symplectic toric stratified spaces with isolated singularities over a particular stratified unimodular local embedding $\psi:W\to \fg^*$ and a subspace of $H^2(\reg{W},\Z_G\times \R)$ determined by the topology of $W$. In particular, we note that this subspace is in fact the full space of cohomology classes $H^2(\reg{W},\Z_G\times \R)$ except potentially when $\dim(G) = 3$.  In the case where $\dim(G)=3$, we instead may identify the subspace of classes associated to stratified spaces with an extension of $H^2(\reg{W},\Z_G)$ by $H^2(W,\R)$.

We finish Section \ref{s:ccforstss} with two illustrative examples. Example \ref{ex:Lshape} demonstrates a stratified unimodular local embedding for a compact symplectic toric stratified space with exactly one singularity exhibiting two properties not allowed by Burns, Guillemin, and Lerman in their classification of compact connected symplectic toric stratified spaces \cite{BurnsGuilleminLerman}. Example \ref{ex:octahedron} first revisits the example presented by Burns, Guillemin, and Lerman of the rational octahedron in $\R^3$ which corresponds to exactly one compact symplectic toric stratified space. We then discuss two variations on this example demonstrating stratified unimodular local embeddings $\psi:W\to \fg^*$ where the classes of $H^2(\reg{W},\Z_G\times \R)$ are not in bijective correspondence with the isomorphism class of symplectic toric stratified space over $\psi$.

%%%%%%%%%%%%%%%%%%%%%%%%%%%%%%%%%%%%%%%%%
\section{Singular symplectic toric cones}\label{s:singstc} 
%%%%%%%%%%%%%%%%%%%%%%%%%%%%%%%%%%%%%%%%%

Symplectic toric stratified spaces with isolated singularities are symplectic toric manifolds except on a discrete set of isolated singularities fixed by the torus $G$. These singularities have neighborhoods modeled by neighborhoods of $-\infty$ of {\em singular symplectic toric cones:} stratified spaces with exactly one singularity that are conical both in the topological sense as well as the symplectic sense. We will make this precise with a series of definitions.

\begin{defn}
Let $L$ be a manifold (possibly with corners). Then {\sf the open cone on $L$}, denoted $c(L)$, is the topological space $(L \times [-\infty,\infty))/(L \times \{-\infty\})$. Here, $[-\infty,\infty)$ is the topological space given by compactifying $\R$ at one end. The vertex of the cone is denoted by $*$ (i.e., the image of $\{-\infty\}\times L$ in $c(L)$ under the quotient).
\end{defn}

While this convention for a topological cone is a bit awkward, it fits the convention for symplectic cones nicely. What follows is a definition for stratified spaces with isolated singularities.

\begin{defn}
A {\sf stratified space with isolated singularities} is a Hausdorff paracompact topological space $X$ with a partition $X = \reg{X}\coprod_{\alpha\in A}\{x_\alpha\}$ such that $\reg{X}$ is a manifold and, for each $x_\alpha$, there exists a neighborhood $U_\alpha$ of $x_\alpha$ in $X$, a compact manifold $L_\alpha$ (called a {\sf link of $x_\alpha$}), and an open embedding $\varphi_\alpha:U_\alpha\to c(L_\alpha)$ such that
\begin{itemize}
	\item $\varphi_\alpha(x_\alpha)=*$ (i.e., $\varphi_\alpha$ maps $x_\alpha$ to the vertex of $c(L_\alpha)$); and
	\item $\varphi_\alpha$ restricts to a diffeomorphism between $\reg{X}\cap U_\alpha$ and its image in $c(L_\alpha)\backslash\{*\}\cong L_\alpha\times \R$.
\end{itemize}
Formally, this data will be represented by the pair $\left(X,\reg{X}\coprod_{\alpha\in A}\{x_\alpha\}\right)$ though informally the partition may be suppressed. Call a choice of link $L_\alpha$, neighborhood $U_\alpha$, and embedding $\varphi_\alpha:U_\alpha\to c(L_\alpha)$ a {\sf local structure datum for $x_\alpha$}.

A {\sf map of stratified spaces with isolated singularities}
\[f:\left(X,\reg{X}\coprod_{\alpha\in A}\{x_\alpha\}\right) \to \left(X',\reg{X'}\coprod_{\beta\in B}\{x_\beta\}\right)\]
is a continuous map $f:X\to X'$ so that $f(\reg{X})\subset \reg{X'}$, $f|_{\reg{X}}$ is a smooth map between $\reg{X}$ and $\reg{X'}$, and for every $\alpha\in A$, $f(x_\alpha)=x'_\beta$ for some $\beta\in B$. Such a map is an {\sf isomorphism of stratified spaces with isolated singularities} if it is a homeomorphism (it follows that $f^{-1}$ is a map of stratified spaces since $f$ is bijective and a map of stratified spaces).
\end{defn}

To model both symplectic toric stratified spaces and their quotients, we assume that stratified spaces with isolated singularities may be modeled on cones of either manifolds or manifolds with corners; we distinguish the latter with the name {\em cornered stratified spaces}.

\begin{defn}
A {\sf cornered stratified space with isolated singularities} is a stratified space $X$ with isolated singularities for which the top stratum and the links of the singularities are manifolds with corners.
\end{defn}

\begin{rem}
The open dense stratum of a stratified space $X$ will always be denoted $\reg{X}$. Note that, for any (cornered) stratified space $X$, an open subset $U \subset X$ also inherits the structure of a (cornered) stratified space. The open dense part of $U$ with respect to this structure is exactly the intersection $\reg{U}=\reg{X}\cap U$.
\end{rem}

Note that, for any compact manifold $L$, its open cone $c(L)$ is a stratified space with one isolated singularity. If $L$ is a compact manifold with corners, then $c(L)$ is a cornered stratified
space with one isolated singularity. In particular, from any symplectic cone with a compact link, we can build a stratified space by adding a point at $-\infty$. To make this precise, we should determine which open neighborhoods should be neighborhoods of $-\infty$.

\begin{defn} 
Given a symplectic cone $(M,\omega)$, a {\sf neighborhood of $-\infty$} is any open subset $U$ so that $U$ intersects every $\R$-orbit of $M$ and, for each $\lambda \in \R$ with $\lambda\leq 0$, $\lambda\cdot U \subset U$.
\end{defn}

We now define singular symplectic cones. These are topological cones (hence, singular) with a symplectic cone structure on the top stratum.

\begin{defn} \label{d:singsympcone}
A {\sf singular symplectic cone (with corners)} is a (cornered) stratified space with one isolated singularity $X=\reg{X}\coprod\{x_0\}$ with a symplectic form $\omega \in \Omega^2(\reg{X},\R)$ and an action of $\R$ restricting to a smooth and proper action on $\reg{X}$ and fixing $x_0$ such that
\begin{itemize}
	\item $(\reg{X},\omega)$ is a symplectic cone;
	\item $\reg{X}/\R$ is compact; and
	\item every neighborhood $U$ of $x_0$ in $X$ contains a neighborhood of $-\infty$ of $(\reg{X},\omega)$.
\end{itemize}
\end{defn}

From any symplectic cone, we can construct a singular symplectic cone.

\begin{prop}\label{p:extendsctoassc}
Any symplectic cone $(M,\omega)$ with $L = M/\R$ compact extends to a singular symplectic cone.
\end{prop}

\begin{proof}
Define the topological space $\tilde{M}$ as follows: as a set, it is simply the disjoint union $M\coprod \{*\}$, for the point $*$ representing our (soon to be) vertex.  $\tilde{M}$ is then given the topology generated by sets of the form:
\begin{enumerate}
	\item $U$, an open subset of $M$; or
	\item $V\coprod\{*\}$, where $V\subset M$ is a neighborhood of $-\infty$.
\end{enumerate}
More succinctly, we topologize the set $\tilde{M}$ by specifying that all neighborhoods of $-\infty$ in $M$ are in fact deleted open neighborhoods of the singular point $*$. By definition, $(\tilde{M},\omega)$ is a singular symplectic toric cone. 
\end{proof}

We now show that any singular symplectic cone is the topological cone on its underlying contact toric manifold.

\begin{prop}\label{p:trivializationextend}
Let $\left(X=\reg{X}\coprod\{x_0\},\omega\right)$ be a singular symplectic cone. Then, for $\R$-quotient $L := \reg{X}/\R$, every trivialization $\varphi:\reg{X}\to L\times \R$ of $\reg{X}$ as a principal $\R$-bundle admits an extension to a homeomorphism $\tilde{\varphi}:X\to c(L)$.
\end{prop}

\begin{proof}
First, $\varphi$ extends to a bijection $\tilde{\varphi}:X\to c(L)$ taking $x_0$ in $X$ to $*$ in $c(L)$. Notice that, since $L$ is compact, any neighborhood $U$ of $*$ in $c(L)$ contains a neighborhood of the form $L\times (-\infty,\epsilon)\coprod\{*\}$, for some $\epsilon \in \R$.  It follows that $\tilde{\varphi}$ takes open neighborhoods of $x_0$ in $X$ to open neighborhoods of $*$ in $c(L)$. A similar argument shows that $\varphi^{-1}$ extends to an open bijection $\tilde{\varphi}^{-1}$ and so $\tilde{\varphi}$ is a homeomorphism.
\end{proof}

Now, we introduce a toric action.

\begin{defn}\label{d:singsymptoriccone}
A {\sf singular symplectic toric cone} is a singular symplectic cone $(X=\reg{X},\coprod\{x_0\})$ with a continuous action of the torus $G$ and a continuous map $\mu:X\to \fg^*$ such that
\begin{itemize}
	\item $G$ fixes the point $x_0$ and restricts to a smooth action on $\reg{X}$; and
	\item The action of $G$ on $\reg{X}$ makes the symplectic cone $(\reg{X},\omega)$ a symplectic toric cone for which $\mu|_{\reg{X}}$ is the homogeneous moment map of $(\reg{X},\omega)$ (see Definition \ref{d:sympcone}).
\end{itemize}
We represent this data as the triple $(X,\omega,\mu)$.

A {\sf map of singular symplectic toric cones} is a $G$-equivariant map of stratified spaces that restricts to a map of symplectic toric cones on the open strata of the source and target.
\end{defn}

\begin{rem}
For singular symplectic toric cone $(X=\reg{X}\coprod\{x_0\},\omega,\mu:X\to \fg^*)$, as $\mu|_{\reg{X}}$ is homogeneous, it follows from the continuity of $\mu$ that $\mu(x_0)=0$.
\end{rem}

Now, we show that the quotient of a singular symplectic toric cone is a cornered stratified space. This will be a local model for the domain of the orbital moment maps of symplectic toric stratified spaces with isolated singularities.

\begin{lemma}\label{l:sstcquotacss}
Let $(X,\omega,\mu:X\to \fg^*)$ be a singular symplectic toric cone. Then for $B = \reg{X}/\R$, $X/G$ is a cornered stratified space with link $B/G$.
\end{lemma}

\begin{proof}
The quotient $B$ comes with a contact structure $\xi$ and the action of $G$ on $\reg{X}$ descends to a contact toric action on $(B,\xi)$  (see Appendix \ref{app:stcs}).  By Proposition \ref{p:trivializationextend}, any trivialization $\phi:\reg{X}\to B\times \R$ extends to a homeomorphism $\tilde{\varphi}:X\to c(B)$.  Furthermore, by Proposition \ref{p:equivtrivialization}, we may choose $\varphi$ to be a
$G$-equivariant trivialization.

As the actions of $G$ and $\R$ commute and $\tilde{\varphi}$ is $G$-equivariant, $\tilde{\varphi}$ descends to a homeomorphism $\tilde{\varphi}:X/G\to c(B/G)$. By Lemma \ref{l:contquotamwc}, $B/G$ is a manifold with corners and therefore $\tilde{\varphi}$ gives local structure data for the singularity $x_0$ of $X$ as a cornered stratified space.
\end{proof}

As in the case of symplectic cones, symplectic toric cones admit trivialization independent extensions to singular symplectic toric cones.

\begin{prop}\label{p:extendtoasstc}
Every symplectic toric cone $(M,\omega,\mu:M\to \fg^*)$ over compact base $L = M/\R$ extends to a singular symplectic toric cone.
\end{prop}

\begin{proof} First, Proposition \ref{p:extendsctoassc} tells us how to transform $(M,\omega)$ into a singular symplectic cone $(\tilde{M},\omega)$.  The toric action descends to a contact toric $G$ action on $L$. With respect to this $G$ action, we can pick a $G$-equivariant trivialization of $M$ as a principal $\R$-bundle $\varphi:M\to L\times \R$ (see Proposition \ref{p:equivtrivialization}) which extends to a homeomorphism $\tilde{\varphi}:\tilde{M}\to c(L)$.

As each set of the form $L \times (-\infty,\epsilon)$ is $G$-invariant, it follows that every neighborhood of $-\infty$ for $M$ contains a $G$-invariant neighborhood of $-\infty$. Thus, the action of $G$ on $M$ extends to a continuous action $\rho:G\times \tilde{M}\to \tilde{M}$ on $\tilde{M}$ fixing the singular point. Then, since for $V$ any $G$-invariant neighborhood of $-\infty$ we have that $\rho^{-1}(V \coprod\{*\}) = G \times (V\coprod\{*\})$, it follows from the observation above that $\rho$ is continuous.

Finally, note that since $\mu$ is smooth and homogeneous, it follows we can continuously extend $\mu$ to $\tilde{\mu}:\tilde{M}\to \fg^*$ by defining $\tilde{\mu}(*)=0$.
\end{proof}

To finish this section, we prove that any isomorphism of symplectic toric cones extends to an isomorphism between their extensions as singular symplectic toric cones. 

\begin{lemma}\label{l:morphismsextend}
Let $(X,\omega,\mu:X\to \fg^*)$ and $(X',\omega',\mu':X'\to \fg^*)$ be two singular symplectic toric cones for which there is an isomorphism of symplectic toric cones
\[f:(\reg{X},\omega,\mu|_{\reg{X}})\to (\reg{X'},\omega',\mu'|_{\reg{X'}})\]
Then $f$ extends to an isomorphism of singular symplectic toric cones.
\end{lemma}

\begin{proof}
Since $f$ is $(G\times \R)$-equivariant, $f$ takes $G$-invariant neighborhoods of $-\infty$ in $(\reg{X},\omega)$ to $G$-invariant neighborhoods of $-\infty$ in $(\reg{X'},\omega')$. $f^{-1}$ satisfies the same property and so we may conclude $f$ and $f^{-1}$ extend to isomorphisms of singular symplectic toric cones.
\end{proof}

%%%%%%%%%%%%%%%%%%%%%%%%%%%%%%%%%%%%%%%%%
\section{Symplectic toric stratified spaces with isolated singularities}\label{s:stss}
%%%%%%%%%%%%%%%%%%%%%%%%%%%%%%%%%%%%%%%%%

After all the work of the previous section, we are finally ready to give a definition of symplectic toric stratified spaces with isolated singularities.

\begin{defn}\label{d:symptorstratspace}
A {\sf symplectic toric stratified space with isolated singularities} is a stratified space with isolated singularities $(X,\reg{X}\coprod_{\alpha\in A} \{x_\alpha\})$ with a symplectic form $\omega\in \Omega^2(\reg{X})$, a continuous map $\mu:X\to \fg^*$, and a continuous action of torus $G$ on $X$ fixing each $x_\alpha$ and restricting to a smooth, toric action on $(\reg{X},\omega)$ with moment map $\mu|_{\reg{X}}:\reg{X}\to \fg^*$. Furthermore, for each $x_\alpha$, there exists a $G$-invariant neighborhood $U$ of $x_\alpha$ in $X$, a singular symplectic toric cone $(C,\omega,\nu:C\to \fg^*)$, a $G$-invariant neighborhood $V$ of the vertex of $C$, and a $G$-equivariant homeomorphism $\varphi:U\to V$ such that
\begin{itemize}
	\item $\varphi(x_\alpha)=*$ (for $*$ the vertex of $C$);
	\item  $\varphi$ restricts to a symplectomorphism between $\reg{U}$ and $\reg{V}$; and
	\item  $\mu|_U = \nu\circ \varphi + \mu(x_\alpha)$.
\end{itemize}
These objects are represented as triples $(X,\omega,\mu)$.
\end{defn}

As explained in Proposition \ref{p:extendtoasstc}, any symplectic toric cone with homogeneous moment map can be extended to a singular symplectic cone; these serve for now as our only example of a symplectic toric stratified space with isolated singularities. More exotic examples are given at the end of Section \ref{s:ccforstss}.

As with symplectic toric manifolds in \cite{KarshonLerman}, symplectic toric stratified spaces with a fixed quotient map are grouped together by their orbital moment map. To make sense of this, it is important to first understand what form their quotients and orbital moment maps take.

\begin{defn}\label{d:sule}
 Let $(W,\reg{W}\coprod_{\alpha\in A}\{w_\alpha\})$ be a cornered stratified space with isolated singularities. A continuous map $\psi:W\to \fg^*$ is called a {\sf stratified unimodular local embedding} if
 \begin{itemize}
 	\item $\psi|_{\reg{W}}$ is a unimodular local embedding; and
 	\item for each $w_\alpha$, there exists a local structure datum $\varphi_\alpha:U_\alpha\to c(L_\alpha)$ and a homogeneous unimodular local embedding (see Definition \ref{d:hule})
 		 $\phi_\alpha :L_\alpha \times \R \to \fg^*$ such that $\psi|_{\reg{(U_\alpha)}} = \phi_\alpha\circ\varphi_\alpha+\psi(w_\alpha)$, where $\phi_\alpha$ is homogeneous with respect to the action of $\R$ by translation on $L_\alpha\times \R$.
 \end{itemize}
We will call the piece of local structure datum $\varphi_\alpha:U_\alpha\to c(L_\alpha)$ as above a {\sf homogeneous local structure datum}. The manifold with corners $L_\alpha$ will be known as {\sf the link of $w_\alpha$}.
\end{defn}

\begin{prop}\label{p:stssommasule}
 Suppose that $(X,\omega,\mu:X\to \fg^*)$ is a symplectic toric stratified space with isolated singularities. Then $X/G$ is a cornered stratified space with isolated singularities and, for quotient map $\pi:X\to X/G$, the unique map $\bar{\mu}:X/G\to \fg^*$ satisfying $\bar{\mu}\circ \pi=\mu$ is a stratified unimodular local embedding.
\end{prop}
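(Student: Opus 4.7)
The plan is to analyze $X/G$ and $\bar{\mu}$ on the regular part and near each singularity separately and then verify they glue into the required structure. First, since $\mu$ is continuous and $G$-invariant and $\pi:X\to X/G$ is a topological quotient, there is a unique continuous map $\bar{\mu}:X/G\to\fg^*$ with $\bar{\mu}\circ\pi=\mu$. The open dense subset $\reg{X}\subset X$ is $G$-invariant (because $G$ fixes each $x_\alpha$), so $\pi(\reg{X})$ is open in $X/G$, and $(\reg{X},\omega,\mu|_{\reg{X}})$ is an honest symplectic toric manifold. Applying Proposition~\ref{p:ule} gives that $\reg{X}/G$ is a manifold with corners and that $\bar{\mu}|_{\reg{X}/G}$ is a unimodular local embedding. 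This handles everything away from the singularities.

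Next I will put a cornered stratified space structure on $X/G$ near each $[x_\alpha]$. Fix the $G$-equivariant local model $\varphi:U\to V\subset C$ from Definition~\ref{d:stss}, where $(C,\omega_C,\nu)$ is a singular symplectic toric cone and $V$ is a $G$-invariant neighborhood of the cone point $*$. Because $\varphi$ is a $G$-equivariant homeomorphism, it descends to a homeomorphism $\bar{\varphi}:U/G\to V/G$. By Lemma~\ref{l:singsympconess}, $C/G$ is a cornered stratified space with one isolated singularity and is homeomorphic to $c(B/G)$ for $B=\reg{C}/\R$. Lemma~\ref{l:struct} then furnishes, inside $V/G$, a local trivialization datum of the form $L_\alpha\times[-\infty,\epsilon)/(L_\alpha\times\{-\infty\})$ with $L_\alpha=B/G$. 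Pulling this back through $\bar{\varphi}$ gives a local structure datum for $[x_\alpha]$ in $X/G$, and the regular parts assemble with $\reg{X}/G$ because $\varphi|_{\reg{U}}$ is a diffeomorphism onto $\reg{V}$ and $\bar{\varphi}$ is a homeomorphism. Together with step one, this exhibits $X/G$ as a cornered stratified space with isolated singularities.

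To finish, I must check that $\bar{\mu}$ is a stratified unimodular local embedding, i.e., that near each $[x_\alpha]$ it has the prescribed homogeneous form. From $\mu|_U=\nu\circ\varphi+\mu(x_\alpha)$ we obtain $\bar{\mu}|_{U/G}=\bar{\nu}\circ\bar{\varphi}+\mu(x_\alpha)$. Now $(\reg{C},\omega_C,\nu|_{\reg{C}})$ is a symplectic toric cone, so by Proposition~\ref{p:hule} the orbital moment map $\bar{\nu}:\reg{C}/G\to\fg^*$ is a homogeneous unimodular local embedding, and by Proposition~\ref{l:Waprincbundle} the $\R$-quotient $\reg{C}/G\to B/G$ is a principal $\R$-bundle of manifolds with corners. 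Since $L_\alpha=B/G$ is paracompact, this bundle admits a global trivialization $\reg{C}/G\cong L_\alpha\times\R$ that is $\R$-equivariant (with $\R$ acting by translation on the second factor); the existence of such a trivialization uses the same argument as in Proposition~\ref{p:hstbtobg} via Theorem~\ref{t:homeqbundles}. Under this identification, $\bar{\nu}$ becomes the required homogeneous unimodular local embedding $\phi_\alpha:L_\alpha\times\R\to\fg^*$, and $\bar{\varphi}$ restricts on $\reg{U}/G$ to the homogeneous local trivialization datum required by Definition~\ref{d:sule}. Combined with the first paragraph, this proves $\bar{\mu}$ is a stratified unimodular local embedding.

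The only subtle point is the compatibility between the topological local trivialization of $U/G$ (coming from Lemma~\ref{l:struct}) and the smooth $\R$-equivariant trivialization of $\reg{C}/G$ needed to realize $\bar{\nu}$ as a homogeneous unimodular local embedding on $L_\alpha\times\R$; these two trivializations must be chosen consistently so that the same $L_\alpha$ serves in both Definition~\ref{d:ss} and Definition~\ref{d:sule}. The cleanest way to arrange this is to choose a single $G$-equivariant smooth trivialization of $\reg{C}\to\reg{C}/\R$ up front, extend it by Proposition~\ref{p:trivext} to the homeomorphism $\tilde{\varphi}_C:C\to c(\reg{C}/\R)$ used in Lemma~\ref{l:singsympconess}, and quotient by $G$; the resulting cornered homeomorphism $C/G\cong c(L_\alpha)$ then supplies both pieces of data simultaneously.
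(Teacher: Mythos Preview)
Your proof is correct and follows essentially the same route as the paper: handle $\reg{X}/G$ via Proposition~\ref{p:ule}, use the $G$-equivariant local model to descend to $\bar{\varphi}:U/G\to C/G$, invoke Lemma~\ref{l:singsympconess} for the cornered stratified structure, and apply Proposition~\ref{p:hule} to $\bar{\nu}$ for the homogeneous unimodular local embedding. The compatibility issue you flag in your last paragraph is exactly what the paper sidesteps by citing Lemma~\ref{l:singsympconess}, whose proof already uses a $G$-equivariant trivialization of $\reg{C}$ (via Proposition~\ref{p:equivprop}) so that the resulting identification $C/G\cong c(L_\alpha)$ is simultaneously the local structure datum and, on the regular part, the smooth $\R$-equivariant diffeomorphism $\reg{C}/G\cong L_\alpha\times\R$; your detour through Proposition~\ref{l:Waprincbundle} and Theorem~\ref{t:homeqbundles} is therefore unnecessary but not wrong.
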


\begin{proof}
Suppose that $X=\reg{X}\coprod_{\alpha\in A}\{x_\alpha\}$.  As $(\reg{X},\omega,\mu|_{\reg{X}})$ is a symplectic toric manifold, $\reg{X}/G$ is a manifold with corners
and $\bar{\mu}|_{\reg{X}/G}$ is a unimodular local embedding (see Proposition \ref{p:KLorbmmaule}). Additionally, by definition, each singular point $x_\alpha$ comes with a $G$-invariant neighborhood $U_\alpha$ and a $G$-equivariant embedding $\varphi_\alpha:U_\alpha \to C_\alpha$ of $U_\alpha$ onto a neighborhood of the vertex of a singular symplectic
toric cone $(C_\alpha,\omega_\alpha,\nu_\alpha:C_\alpha\to \fg^*)$ so that $\mu|_{U_\alpha}=\nu_\alpha\circ \varphi_\alpha+\mu(x_\alpha)$. 

In turn, by Lemma \ref{l:sstcquotacss}, the quotient $C_\alpha/G$ of this singular symplectic toric cone is a cornered stratified space and the associated orbital moment map $\bar{\nu}_\alpha:C_\alpha/G\to \fg^*$ is a homogeneous unimodular local embedding.  $\varphi_\alpha$ descends to a homeomorphism $\bar{\varphi}_\alpha:U_\alpha/G\to C_\alpha/G$ and the condition $\mu|_{U_\alpha}=\nu_\alpha\circ \varphi_\alpha+\mu(x_\alpha)$ descends to the required condition relating $\bar{\varphi}_\alpha$ and $\bar{\nu}_\alpha$.
\end{proof}

We can now define our main category of interest. 

\begin{defn}\label{d:stss}
Let $\psi:W\to \fg^*$ be a stratified unimodular local embedding. A {\sf symplectic toric stratified space over $\psi$} is a symplectic toric stratified space $(X,\omega,\mu:X\to \fg^*)$ with a $G$-quotient map $\pi:X\to W$ so that $\psi\circ \pi = \mu$.

A {\sf map of symplectic toric stratified spaces over $\psi$} between $(X,\omega,\pi:X\to W)$ and $(X',\omega',\pi':X'\to W)$ is a $G$-equivariant isomorphism of stratified spaces $\varphi:X\to X'$ that restricts to a symplectomorphism between $(\reg{X},\omega)$ and $(\reg{X'},\omega')$ and satisfies $\pi'\circ\varphi = \pi$.

{\sf The groupoid of symplectic toric stratified spaces over $\psi$} $\stss_\psi(W)$ is the groupoid with objects and morphisms as described above.
\end{defn}

\begin{rem}\label{r:stssapsheaf}
Note that for any open subset $U\subset W$, $\psi|_U$ is also a stratified unimodular local embedding. Therefore, it makes sense to define the presheaf
\[\stss_\psi:\op{\open(W)}\to \gpoid,\;\;\;U\mapsto \stss_\psi(U):=\stss_{\psi|_U}(U)\]
Implicit here is the fact that, for $U$ not containing singularities, the condition that an object of $\stss_\psi(U)$ must be modeled on certain neighborhoods of singular symplectic toric
cones is empty. Hence, here $\stss_\psi(U)$ and $\stm_{\psi|_{\reg{W}}}(U)$ (see Definition \ref{d:stm}) agree.
\end{rem}

To finish this section, we describe how symplectic toric stratified spaces may be pulled back over certain open embeddings of cornered stratified spaces.

\begin{rem}\label{r:stsspullback}
Suppose that $\psi:W\to \fg^*$ and $\psi':W'\to \fg^*$ are two stratified unimodular local embeddings and let $\varphi:W'\to W$ be an open embedding of cornered stratified spaces
with $\psi\circ \varphi=\psi'$. Then, for any $(X,\omega,\pi:X\to W)\in \stss_\psi(W)$, we may pullback $(X,\omega,\pi:X\to W)$ via $\varphi$ to the symplectic toric stratified space $\varphi^*(X,\omega,\pi:X\to W)$ over $\psi'$ (this is simply the restriction of $(X,\omega,\pi:X\to W)$ to $\varphi(W')$ with $G$-quotient map $\varphi^{-1}\circ \pi$).

We can also pullback isomorphisms: any isomorphism
\[f:(X,\omega,\pi:X\to W)\to (X',\omega',\pi':X'\to W)\]
of $\stss_\psi(W)$ induces a unique isomorphism
\[\varphi^*f:\varphi^*(X,\omega,\pi:X\to W)\to \varphi^*(X',\omega',\pi':X'\to W)\]
of $\stss_{\psi'}(W')$.
\end{rem}

%%%%%%%%%%%%%%%%%%%%%%%%%%%%%%%%%%%%%%%%%
\section{Conical symplectic toric bundles}\label{s:cstb}
%%%%%%%%%%%%%%%%%%%%%%%%%%%%%%%%%%%%%%%%%

As in \cite{KarshonLerman} and in Part \ref{part:stc}, we define a new groupoid of principal $G$-bundles with special properties. The bundles in this section, conical symplectic toric bundles, are symplectic toric bundles with an extra condition to satisfy over deleted open neighborhoods of the singularity of the base cornered stratified space.

\begin{defn}\label{d:cstb}
 Let $(W,\reg{W}\coprod_{\alpha\in A}w_\alpha)$  be a cornered stratified space with isolated singularities and let $\psi:W\to \fg^*$ be a stratified unimodular local embedding. Then a {\sf conical symplectic toric principal $G$-bundle over $\psi$} is a symplectic toric bundle $(\pi:P\to \reg{W},\omega)$ over $\psi|_{\reg{W}}$ (see Definition \ref{d:stb}) satisfying the following local condition: for each singularity $w_\alpha$ of $W$, there exists
\begin{itemize}
	\item a homogeneous local trivialization datum $\varphi:U\to c(L)$ of $w_\alpha$ with associated homogeneous unimodular local embedding $\phi:L\times \R\to \fg^*$ (see Definition \ref{d:sule});
	\item a homogeneous symplectic toric bundle $(\varpi: Q \to L\times \R,\eta)$ over $\phi$ (see Definition \ref{d:hstb}); and
	\item for $V := \varphi(U)$, there is a $G$-equivariant symplectomorphism $\tilde{\varphi}: (P|_{\reg{U}},\omega) \to (Q|_{\reg{V}},\eta)$ so that the diagram
	\[\xymatrix{P|_{\reg{U}}\ar[r]^{\tilde{\varphi}} \ar[d]_\pi & Q|_{\reg{V}} \ar[d]^{\varpi} \\ U \ar[r]_\varphi & c(L)}\]
	commutes.
\end{itemize}
{\sf The category $\cstb_\psi(W)$ of conical symplectic toric bundles over $\psi$} is the groupoid with objects as described above and morphisms maps of symplectic toric bundles: bundle isomorphisms which are also symplectomorphisms.
\end{defn}

We may more efficiently characterize conical symplectic toric bundles using pullbacks.

\begin{lemma}\label{l:pullbackdefncstb}
Let $\psi:W\to \fg^*$ be a stratified unimodular local embedding. Then a symplectic toric bundle $(\pi:P\to \reg{W}, \omega)$ is a conical symplectic toric bundle exactly when, for each singularity $w_\alpha$ of $W$, there exists
\begin{itemize}
	\item an open neighborhood $U$ of $w_\alpha$;
	\item a homogeneous local trivialization datum $\varphi:U\to c(L)$ with homogeneous unimodular local embedding $\phi:L\times \R\to \fg^*$ satisfying $\psi|_{\reg{U}} = \phi\circ \varphi+\psi(w_\alpha)$; and 
	\item a homogeneous symplectic toric bundle $(\varpi : Q \to L \times \R,\eta) \in \hstb_\phi(L\times \R)$
\end{itemize}
so that, thinking of $(Q,\eta)$ as a symplectic toric bundle over $\varphi+\psi(w_\alpha)$ (see Remark \ref{r:shiftedmomentmap}), $\varphi^*(Q,\eta)$ and $(P,\omega)|_{U}$ are isomorphic in $\stb_{\psi|_{\reg{U}}}(\reg{U})$.
\end{lemma}

\begin{proof}
This is easily confirmed using Definition \ref{d:cstb}. 
\end{proof}

\begin{rem}\label{r:cstbapsheaf}
As in the case of $\stss_\psi$ (see Remark \ref{r:stssapsheaf}), we have a presheaf of groupoids $\cstb_\psi:\op{\open(W)} \to \gpoid$. Open subsets $U$ of $W$ not containing singularities renders
the extra conditions of Definition \ref{d:cstb} empty and here $\cstb_\psi(U)$ and $\stb_{\psi|_{\reg{W}}}(U)$ (the presheaf of symplectic toric principal $G$-bundles, see Definition \ref{d:stb}) agree.

In fact, $\cstb_\psi(W)$ is a stack over $W$. As the proof of this is more or less just a retelling of the proof that the presheaf of principal bundles over a topological space is a stack, we relegate this proof to the appendix (see Proposition \ref{p:cstbastack}).
\end{rem}

As in the case of symplectic toric stratified spaces (see Remark \ref{r:stsspullback}), conical symplectic toric bundles may be pulled back in special cases.

\begin{rem}\label{r:cstbpullback}
Let $\psi:W\to \fg^*$ and $\psi':W'\to \fg^*$ be two stratified unimodular local embeddings and let $\varphi:W'\to W$ be an open embedding of cornered stratified spaces with
$\psi'\circ \varphi=\psi$. Then, for $(\pi:P\to \reg{W},\omega)\in \cstb_\psi(W)$, note that $\varphi^*P$ is a conical symplectic toric bundle over $\psi'$ (with symplectic form pulled back from $P$).
\end{rem}

As the forgetful functor from conical symplectic toric bundles to symplectic toric bundles is important, we give it a symbol and reference here:

\begin{defn}\label{d:forgetfulfunctor}
Given a stratified unimodular local embedding $\psi:W\to \fg^*$, let $\iota:\cstb_\psi(W)\to \stb_{\psi|_{\reg{W}}}(\reg{W})$ denote the forgetful functor.
\end{defn}

\begin{lemma}\label{l:forgetfulfunctorff}
For any stratified unimodular local embedding $\psi:W\to \fg^*$, $\iota:\cstb_\psi(W)\to \stb_{\psi|_{\reg{W}}}(\reg{W})$ is fully faithful.
\end{lemma}

\begin{proof}
Since maps of conical symplectic toric bundles are maps of symplectic toric bundles without any added conditions, this follows by definition.
\end{proof}

It is not clear that $\cstb_\psi(W)$ is non-empty for certain choices of $W$ or $\psi$. Unlike the case of symplectic toric bundles, we will soon see that the existence of a $G$-invariant symplectic form $\omega$ on a given principal $G$-bundle $\pi:P\to \reg{W}$ with respect to which $(\pi:P\to \reg{W},\omega)$ is a conical symplectic toric bundle may depend on the topologies of $P$ and $W$.

%%%%%%%%%%%%%%%%%%%%%%%%%%%%%%%%%%%%%%%%%
\section{The morphism of presheaves $\ssc:\cstb_\psi \to \stss_\psi$}\label{s:ssc}
%%%%%%%%%%%%%%%%%%%%%%%%%%%%%%%%%%%%%%%%%

In this section, we will define the functor $\ssc:\cstb_\psi(W)\to \stss_\psi(W)$ for any stratified unimodular local embedding $\psi:W\to \fg^*$. This is done using the following steps.\\
\begin{enumerate}[label={\bf Step \arabic*:}]
\item We first define the functor $\sctop$ taking conical symplectic toric bundles $(\pi:P\to \reg{W},\omega)$ in $\cstb_\psi(W)$ to pairs $\sctop(\pi:P\to \reg{W},\omega)=(\bar{P},\bar{\pi}:\bar{P}\to W)$ of topological $G$-spaces with quotient maps to $W$ and maps of conical symplectic toric bundles to maps of topological $G$-spaces over $W$: $G$-equivariant homeomorphisms $f : (X,\varpi : X \to W) \to (X',\varpi' : X' \to W)$ with $\varpi = \varpi'\circ f$.

\item We then define $\ssc(\pi:P\to \reg{W},\omega)$ as the tuple
\[(X=\reg{X}\coprod_{\alpha\in A}\{x_\alpha\},\omega,\bar{\pi}:X\to W)\]
where $(X,\bar{\pi}) = \sctop(P,\omega)$ and where $X=\reg{X} \coprod_{\alpha\in A}\{x_{\alpha}\}$ is a partition of $X$ such that $(\reg{X},\omega,\bar{\pi}|_{\reg{X}}:\reg{X}\to \reg{W})$ is a symplectic toric manifold over $\psi|_{\reg{W}}$.  We may also show that, for any morphism of conical symplectic toric principal bundles $\varphi$, the morphism $\ssc(\varphi):=\sctop(\varphi)$ restricts to a symplectomorphism on the open dense pieces of the source and target partitioned symplectic toric spaces.  This therefore establishes the functor $\ssc:\cstb_\psi(W)\to \psts_\psi(W)$ to {\em the category of partitioned symplectic toric spaces over $\psi$} (see Definition \ref{d:psts}).

\item We next show $\ssc$ commutes with pullbacks: for any open embedding $\varphi:W'\to W$ and stratified unimodular local embeddings $\psi:W\to \fg^*$ and $\psi':W'\to \fg^*$ for which $\psi\circ \varphi=\psi'$, given any conical symplectic toric bundle $(P,\omega)$, $\ssc(\varphi^*(P,\omega))=\varphi^*(\ssc(P,\omega))$ (see Remarks \ref{r:stsspullback}, \ref{r:cstbpullback}, and \ref{r:pstspullbacks}). In particular, $\ssc$ commutes with restrictions.

\item We show for a particular model case of $\psi$ and $(P,\omega)$ that $\ssc(P,\omega)$ is a symplectic toric cone.

\item With a small amount of work, the model case of Step 4 allows us to show that every $\ssc(P,\omega)$ is a symplectic toric stratified space over $\psi$.  Since $\stss_\psi(W)$ is a full subcategory of $\psts_\psi(W)$, we may thereby conclude that $\ssc$ restricts to a functor $\ssc:\cstb_\psi(W)\to \stss_\psi(W)$.\\
\end{enumerate}

We now flesh out the details of each step.\\

\noindent{\bf Step 1:} To start, fix a stratified unimodular local embedding $\psi:W\to \fg^*$ and fix $(\pi:P\to \reg{W},\omega)$ a conical symplectic toric bundle over $\psi$. Suppose $W$ has partition $W = \reg{W} \coprod_{\alpha\in A}\{w_\alpha\}$.  First we define $\sctop(P,\omega)$, a topological $G$-space with a $G$-quotient to $W$.

We construct the cornered stratified space $\tilde{P}$ as follows: as a set, $\tilde{P}:= P\coprod_{\alpha\in A}\{p_\alpha\}$ for a set $\{p_\alpha\}_{\alpha \in A}$ in bijection with the singularities of $W$.  We give $\tilde{P}$ the topology generated by
\begin{enumerate}
	\item open subsets of $P$; and
	\item sets $\{p_\alpha\}\coprod P|_{\reg{U}}$ for $U$ a neighborhood of $w_\alpha$ in $W$.
\end{enumerate}
Note that $\tilde{P}$ inherits a $G$ action extended from the free action of $G$ on $P$; specifically, one where each $p_\alpha$ is fixed. This is a continuous extension as every open neighborhood of a $p_\alpha$ by design contains a $G$-invariant deleted neighborhood of $p_\alpha$. Furthermore, note that the $G$-quotient $\pi:P\to \reg{W}$ extends to a $G$-quotient $\tilde{\pi}:\tilde{P}\to W$ with $\tilde{\pi}(p_\alpha)=w_\alpha$ for every $\alpha$.

Now, for each $w\in \reg{W}$, let $K_w \leq  G$ be the subtorus determined by the image of the unimodular local embedding $\psi|_{\reg{W}}$ (see Section \ref{s:KL}). Let $\sim$ be the equivalence relation on $\tilde{P}$ with each singularity $p_\alpha$ occupying its own equivalence class and defined for elements of $P\subset \tilde{P}$ by
\[p\sim p' \text{ when there exists }k\in K_{\pi(p)}\text{ such that }p\cdot k=p'.\]

Since $\sim$ only identifies elements of the same $G$-orbit, the $G$ action $g\cdot[p] := [g\cdot p]$ on $\tilde{P}/\sim$ is well-defined and continuous. Furthermore, the $G$-quotient map $\tilde{\pi}$ descends to a $G$-quotient map $\bar{\pi}:\tilde{P}/\sim\,\to W$ for this action. Define $\sctop(P,\omega):=(\tilde{P},\bar{\pi}:\tilde{P}\to W)$.

Now, to define $\sctop$ on morphisms, let $\varphi:(P,\omega)\to (P',\omega')$ be a map of conical symplectic toric bundles. Again, due to how the topology of $\tilde{P}$ was defined, $\varphi$ extends to a $G$-equivariant homeomorphism $\tilde{\varphi}$ intertwining the two $G$-quotient maps $\tilde{\pi}$ and $\tilde{\pi}'$ for $\tilde{P}$ and $\tilde{P}'$, respectively. Therefore, $\tilde{\varphi}:\tilde{P}\to \tilde{P}'$ descends to an isomorphism of topological $G$-spaces over $W$:
\begin{equation}\label{eq:sctoponmorphisms}\sctop(\varphi):\tilde{P}/\sim\,\to \tilde{P}'/\sim,\;\;\;\sctop(\varphi)([p]):=[\tilde{\varphi}(p)]\end{equation}
such that $\bar{\pi}'\circ \ctop(\varphi) = \bar{\pi}$.\\

\noindent{\bf Step 2:} In this step, we define $\ssc$ first as a functor to the category of {\em partitioned symplectic toric spaces over $\psi$}.

\begin{defn}\label{d:psts}
For $\psi:W\to \fg^*$ a stratified unimodular local embedding, a {\sf partitioned symplectic toric stratified space over $\psi$} is a triple $(X=\reg{X}\coprod_{\alpha \in A}\{x_\alpha\},\omega,\pi:X\to W)$ with a continuous action of $G$ on $X$ fixing each $x_\alpha$ and restricting to a smooth action on $\reg{X}$ such that:
\begin{itemize}
	\item the action of $G$ on $X$ has $G$-quotient map $\pi:X\to W$; and
	\item $(\reg{X},\omega,\pi|_{\reg{X}})$ is a symplectic toric manifold over $\psi|_{\reg{W}}$ with respect to the restricted action of $G$ to $\reg{X}$.
\end{itemize}

A {\sf map of partitioned symplectic toric stratified spaces over $\psi$} from $(X=\reg{X}\coprod_{\alpha\in A}\{x_\alpha\},\omega,\pi)$ to  $(Y=\reg{Y}\coprod_{\beta\in B}\{y_\beta\},\omega',\varpi)$ is a homeomorphism $\varphi:X\to Y$ with $\varpi\circ\varphi=\pi$ restricting to a map of symplectic toric manifolds over $\psi|_{\reg{W}}$ from $(\reg{X},\omega,\pi|_{\reg{W}})$ to $(\reg{Y},\omega,\varpi|_{\reg{Y}})$.

Denote by $\psts_\psi(W)$ {\sf the category of partitioned symplectic toric spaces over $\psi$}.
\end{defn}

\begin{rem}\label{r:stssfullsubcat}
Notice that symplectic toric stratified spaces over $\psi$ are partitioned symplectic toric spaces over $\psi$ with an extra conical condition near each singularity (again, here the ``conical'' condition is both a topological and a symplectic condition).  Indeed, the category $\stss_\psi(W)$ of symplectic toric stratified spaces over $\psi$ is a full subcategory of the category $\psts_\psi(W)$. 
\end{rem}

\begin{rem}\label{r:pstspullbacks}
Notice that, as in the case of symplectic toric stratified spaces over $\psi$ and conical symplectic toric spaces over $\psi$, $\psts_\psi$ also defines a presheaf over $W$, with $\psts_\psi(U):=\psts_{\psi|_U}(U)$ and $(X,\omega,\pi)|_U := (\pi^{-1}(U),\omega|_{\reg{(\pi^{-1}(U))}},\pi|_{\pi^{-1}(U)})$ for $U$ any open subset of $W$.  In thinking of $\stss_\psi(W)$ as a full subcategory of $\psts_\psi(W)$, it is not difficult to confirm that restriction in $\psts_\psi(W)$ matches restriction in $\stss_\psi(W)$ and therefore $\stss_\psi$ is in fact a full subpresheaf of $\psts_\psi$.

More generally, for $\psi':W'\to \fg^*$ and $\psi:W\to \fg^*$ and $\varphi:W'\to W$ an open embedding of cornered stratified spaces, we may pullback an element $(X,\omega,\pi)$ of $\psts_\psi(W)$ to an element of $\psts_{\psi'}(W')$, replacing the $G$-quotient map of the restriction $(X,\omega,\pi)|_{\varphi(W')}$ with the $G$-quotient to $W'$ given by $\varphi^{-1}|_{\varphi(W')}\circ \pi$.
Similarly, we can pullback any morphism of $\psts_\psi(W)$ to a morphism of $\psts_{\psi'}(W')$.  Again, note that the pullback operation defined for partitioned symplectic toric spaces matches the pullback operation for symplectic toric stratified spaces (as defined in Remark \ref{r:stsspullback}).
\end{rem}

Let $\psi:W\to \fg^*$ be a stratified unimodular local embedding and let $(\pi:P\to \reg{W},\omega)$ be a conical symplectic toric bundle. Let $\tilde{P}$ and $\sim$ be the extension of $P$ and equivalence relation as described in the previous step. Note that the subset $P\subset \tilde{P}$ descends to the open dense subset $P/\sim \,\subset \tilde{P}/\sim$. Furthermore, $P/\sim$ is exactly the topological space $\ctop(\iota(P,\omega))$ (recall $\iota:\cstb_\psi(W)\to \stb_{\psi|_{\reg{W}}}(\reg{W})$ is the forgetful functor; see Remark \ref{r:stssapsheaf} and Definition \ref{d:forgetfulfunctor}).

So $\tilde{P}/\sim$ has partition $\ctop(\iota(P,\omega))\coprod_{\alpha\in A}\{[p_\alpha]\}$. Recall that, by definition, $\cut(\iota(P,\omega))$ is a symplectic toric manifold over $\psi|_{\reg{W}}$ homeomorphic to the topological space $\ctop(P,\omega)$.

Write $\cut(\iota(P,\omega))= (P/\sim,\bar{\omega}, \varpi:P/\sim\,\to \reg{W})$. Note that the $G$-quotient map $\varpi$ is just the $G$-quotient map for the topological space $\ctop(\iota(P,\omega))\subset \tilde{P}/\sim$ and so must match the restriction of $\bar{\pi}:\tilde{P}/\sim\,\to W$.

We now define $\ssc$ as the partitioned symplectic toric space over $\psi$
\begin{equation}\label{eq:sconobjects}\ssc(\pi:P\to \reg{W},\omega):=(\tilde{P}/\sim,P/\sim \coprod_{\alpha\in A}\{[p_\alpha]\}, \bar{\omega},\bar{\pi}:\tilde{P}/\sim\,\to W)\end{equation}

Now, let $\varphi:(P,\omega)\to (P',\omega')$ be a map of conical symplectic toric bundles over $\psi$. Then since our definition for $\sctop(\varphi)$ matches $\ctop(\iota(\varphi))$ (again, see Section \ref{s:KL}), it follows that the morphism $\sctop(\varphi)$ restricts to an isomorphism of symplectic toric manifolds between the open dense pieces of $\ssc(P,\omega)$ and $\ssc(P',\omega')$.

Now, we define $\ssc$.

\begin{defn}
Let $\psi:W\to \fg^*$ be a stratified unimodular local embedding. Then $\ssc:\cstb_\psi(W)\to \psts_\psi(W)$ is the functor such that $\ssc(P,\omega)$ is the partitioned symplectic toric space over $\psi$ given in equation \eqref{eq:sconobjects} and $\ssc(\varphi):=\sctop(\varphi)$ is the morphism given in equation \eqref{eq:sctoponmorphisms}.
\end{defn}

Hence, since $\stss_\psi(W)$ is a full subcategory of $\psts_\psi(W)$, the remainder of the steps towards defining $\ssc$ involve checking that $\ssc$ is natural with respect to restriction and that the image of $\ssc:\cstb_\psi(W)\to \psts_\psi(W)$ is contained in $\stss_\psi(W)\subset \psts_\psi(W)$.\\

\noindent{\bf Step 3:} The content of this step is the following lemma regarding $\ssc$ commuting with restrictions and, more generally, with pullbacks (as defined in Remarks \ref{r:stsspullback} and \ref{r:pstspullbacks}).

\begin{lemma}\label{l:ssccommuteswithpullbacks}
Suppose $\varphi:W'\to W$ is an open embedding of cornered stratified spaces and $\psi:W\to \fg^*$, $\psi':W'\to \fg^*$ are stratified unimodular local embeddings with $\psi\circ\varphi = \psi'$.  Then for any conical symplectic toric bundle $(\pi:P\to \reg{W},\omega)$, $\ssc(\varphi^*(P,\omega))=\varphi^*\ssc(P,\omega)$.  In particular, for any open subset of $U$ in $W$, $\ssc((P,\omega)|_U) = \ssc(P,\omega)|_U$.
\end{lemma}

\begin{proof}
Fix open $U$ in $W$. Notice that $(P,\omega)|_U$ is a saturated open set with respect to the quotient map associated to the equivalence relation $\sim$ defining $\sctop$.  Therefore, $\ssc((P,\omega)|_U)$ is an open subset of $\ssc(P,\omega)$. Since the $G$-quotient map of $\ssc(P,\omega)$ to $W$ is inherited from that of $P$, it follows that the open subset $\ssc((P,\omega)|_U)$ is exactly the restriction $\ssc(P,\omega)|_U$.

More generally, since pullbacks of conical symplectic toric bundles and partitioned symplectic toric spaces by open embeddings $\varphi:W'\to W$ are restrictions followed by alteration of the quotient map, it follows from the above that $\ssc(\varphi^*(P,\omega))=\varphi^*\ssc(P,\omega)$.
\end{proof}

\noindent {\bf Step 4:} We now prove that $\ssc(P,\omega)$ is a symplectic toric stratified space for the local conical model of neighborhoods of singular points in $W$.

\begin{prop}
Let $B$ be a a manifold with corners and $\psi:c(B)\to \fg^*$ be a stratified unimodular local embedding for which $\psi|_{B\times \R}$ is a homogeneous unimodular local embedding
with respect to the action of translation on the second factor (see Definition \ref{d:hule}). Suppose that $(\pi:P\to B\times \R,\omega)$ is a homogeneous symplectic toric bundle over $\psi|_{B\times \R}$. Then $(P,\omega)$ is a conical symplectic toric bundle over $\psi$ and $\ssc(P,\omega)$ is a singular symplectic toric cone over $\psi$.
\end{prop}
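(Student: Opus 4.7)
My plan is as follows. For the first assertion I would verify Definition \ref{d:cstb} directly at the unique singularity $* \in c(B)$: take the neighborhood $U = c(B)$, the identity as the homogeneous local trivialization datum $\varphi\colon c(B) \to c(B)$ with link $L = B$, set $\phi := \psi|_{B \times \R}$ (a homogeneous unimodular local embedding by hypothesis), and use $(P,\omega)$ itself as the required homogeneous symplectic toric bundle with $\tilde\varphi = \mathrm{id}$. The compatibility $\psi|_{\reg{U}} = \phi \circ \varphi + \psi(*)$ then holds because $\psi(*) = 0$, which follows from continuity of $\psi$ together with the homogeneity of $\psi|_{B \times \R}$ (cf.\ Remark \ref{r:sstchule}).

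For the second assertion, my plan is to identify $\cc(P,\omega)$ with the canonical singular-cone extension of the symplectic toric cone $\hc(P,\omega)$ produced by Proposition \ref{p:symptoriccone}. By Definition \ref{d:hc} and Proposition \ref{p:hcobject}, the regular part of $\cc(P,\omega)$ as a partitioned space is exactly $\hc(P,\omega)$, a symplectic toric cone over $\psi|_{B \times \R}$. Because $\bar\pi\colon \hc(P,\omega) \to B \times \R$ is $G$-invariant and $\R$-equivariant (the latter by Lemma \ref{l:piequiv} applied to $\pi$, using that the cutting procedure preserves $\R$-equivariance on the open dense piece), it descends to a homeomorphism $\hc(P,\omega)/(G \times \R) \cong B$. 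The link $B$ must be compact for $c(B)$ to be a cornered stratified space with isolated singularities, so $\hc(P,\omega)/\R$, being a principal $G$-bundle over $B$, is compact as well. Proposition \ref{p:symptoriccone} thus produces a singular symplectic toric cone $\widetilde{\hc(P,\omega)}$ over $\psi$, with the moment map extended continuously by $0$ at the cone point.

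The remaining step, which I expect to be the main technical point, is to show that $\cc(P,\omega) = \tilde{P}/\sim$ equals $\widetilde{\hc(P,\omega)}$ as a topological $G$-space over $c(B)$. The underlying sets, $G$-actions, symplectic forms on the regular parts, extended moment maps, and quotient maps to $c(B)$ all agree tautologically, so the task is to verify that the two topologies at the added singular point share a cofinal system of open neighborhoods. On the $\cc$-side a basic open neighborhood of $[p_*]$ has the form $\{[p_*]\} \sqcup q(P|_{\reg{U}})$ for $U$ a neighborhood of $*$ in $c(B)$; by Lemma \ref{l:struct} such a set contains one of the form $\{[p_*]\} \sqcup \bar\pi^{-1}(B \times (-\infty,\epsilon))$. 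On the $\widetilde{\hc(P,\omega)}$-side the basic neighborhoods are $\{*\} \sqcup V$ with $V$ a neighborhood of $-\infty$ in the sense of Definition \ref{d:noi}. The cofinality claim $V \supseteq \bar\pi^{-1}(B \times (-\infty,\epsilon'))$ for some sufficiently small $\epsilon'$ should follow from compactness of $B$ together with the $\R$-equivariance of $\bar\pi$; the reverse inclusion is immediate. Once this cofinality is established, $\cc(P,\omega) \cong \widetilde{\hc(P,\omega)}$ as singular symplectic toric cones over $\psi$, which yields the proposition.
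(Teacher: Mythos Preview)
Your approach is sound and reaches the conclusion, but via a somewhat different route than the paper's. There is one small error: the claim that $\hc(P,\omega)/\R$ is a \emph{principal} $G$-bundle over $B$ is incorrect in general, since the $G$-action on $\hc(P,\omega)$ is free only over the interior $\mathring{B}\times\R$. What you actually get is a contact toric manifold whose $G$-quotient is $B$. Fortunately this does not damage the argument: compactness of $\hc(P,\omega)/\R$ still follows because $B$ is compact and the quotient map by a compact group action is proper. The cofinality comparison of neighborhood bases at the singular point is then exactly as you describe, using that $\bar\pi$ is $\R$-equivariant and that (after choosing the slice $\bar\pi^{-1}(B\times\{0\})$ to trivialize $\hc(P,\omega)$ as a principal $\R$-bundle) the sets $\bar\pi^{-1}(B\times(-\infty,\epsilon))$ are precisely the ``tube'' neighborhoods $L'\times(-\infty,\epsilon)$ in that trivialization.

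The paper proceeds differently: rather than comparing topologies at the level of $\tilde P/\!\sim$, it works one level up, first observing that the intermediate space $\tilde P$ (before quotienting by $\sim$) is itself the singular-cone extension of $(P,\omega)$ in the sense of Proposition~\ref{p:singsympcone}. It then chooses a $G$-equivariant trivialization $\varphi\colon P\to L\times\R$ with $L=P/\R$, extends it to $\tilde\varphi\colon\tilde P\to c(L)$, and passes to the quotient by $\sim$ slice-by-slice to obtain an explicit homeomorphism $\tilde P/\!\sim\;\to c(L')$ with $L'=c_{\sf Top}(L\times\{0\})$. This exhibits the local trivialization datum for the stratified-space structure directly. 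Your argument is slightly more conceptual (it offloads the stratified-space verification to Proposition~\ref{p:symptoriccone} applied to $\hc(P,\omega)$), while the paper's explicit homeomorphism makes the link $L'\cong\hc(P,\omega)/\R$ visible and dovetails with the later use of these trivializations in Step~5.
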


\begin{proof}
First, by definition, any homogeneous symplectic toric bundle over $\psi|_{B\times \R}$ is a conical symplectic toric bundle over $\psi$ (indeed, $(P,\omega)$ itself satisfies the required local condition for a conical symplectic toric bundle; see again Definition \ref{d:cstb}).

Let $\tilde{P}$ and $\tilde{\pi}:\tilde{P}\to W$ correspond to the same extension of $P$ and $\pi$ as defined in Step 1 in constructing $\sctop$. Then $(\tilde{P},\omega,\psi\circ \tilde{\pi})$ is exactly the extension of the symplectic toric cone $(P,\omega,\psi\circ\pi)$ to a singular symplectic toric cone as defined in Proposition \ref{p:extendtoasstc}.

Since $P\subset \tilde{P}$ is a homogeneous symplectic toric bundle over $\psi|_{B\times \R}$, we may apply $\hc$ to $(P,\omega)$. Recall that $\hc:\stb_{\psi|_{B\times \R}}(B\times \R)\to \stc_{\psi|_{B\times \R}}(B\times \R)$ is just the functor $\cut$ that remembers the action of $\R$ on a homogeneous symplectic toric bundle (see Definition \ref{d:hc}).  So, using $\hc$, $\cut(P,\omega)$ inherits the structure of a symplectic toric cone over $\psi|_{B\times \R}$.

To finish, we must show that $\ctop$ is homeomorphic to a singular cone. let $L := P/\R$ and let $\varphi:P\to L\times \R$ be a $G$-equivariant trivialization of $P$ as a principal $\R$-bundle (by Proposition \ref{p:equivtrivialization}, such a trivialization exists). Then, by Proposition \ref{p:trivializationextend}, this extends to a homeomorphism $\tilde{\varphi}:\tilde{P}\to c(L)$. Note that we may apply the construction $\ctop$ to each slice $L\times \{\tau\}\to B\times \R$ and, since the actions of $G$ and $\R$ on $L \times \R$ commute, it follows that the topological $G$-spaces $\ctop(L\times \{\tau\})$ are equivariantly homeomorphic for each $\tau$. So, defining $\bar{L}:= \ctop(L\times\{0\})$, it follows that $\sctop(c(L))\cong c(\bar{L})$ as topological $G$-spaces and furthermore that $[p,t]$ corresponds to an element of $\bar{L}\times \{t\}\subset c(\bar{L})$ for each $(p,t)\in L\times \R\subset c(L)$.

Therefore, the map $\varphi:P\to L\times \R$ then descends to a $(G\times \R)$-equivariant homeomorphism $\phi:\tilde{P}/\sim\to c(\bar{L})$.  As $\phi$ restricts to a $(G\times \R)$-equivariant homeomorphism between $\hc(P,\omega)\subset \tilde{P}/\sim$ and $\bar{L} \times \R$, we may conclude that $\bar{L}$ is in fact homeomorphic to the manifold $\hc(P,\omega)/\R$. Therefore, $\phi$ gives local trivialization data for $[*]$ in $\tilde{P}$ and so $(\tilde{P}/\sim,P/\sim \coprod\{[*]\})$ is a stratified space with one singularity and it follows by definition that $\ssc(P,\omega) = (\tilde{P}/\sim,P/\sim\coprod\{[*]\},\hc(\omega),\bar{\pi}:\tilde{P}/\sim\to c(B))$ is a singular symplectic toric cone over $\psi$ (i.e., a singular symplectic toric cone with moment map $\psi\circ \bar{\pi}$).
\end{proof}

{\bf Step 5:} Finally, we now show that $\ssc:\cstb_\psi(W)\to \psts_\psi(W)$ has image contained in $\stss_\psi(W)$ and hence restricts to a functor $\ssc:\cstb_\psi(W)\to \stss_\psi(W)$.

\begin{prop}
Let $\psi:W\to \fg^*$ be a stratified unimodular local embedding and let $(\pi:P\to \reg{W},\omega)$ be a conical symplectic toric bundle.  Then $\ssc(P,\omega)$ is a symplectic toric stratified space over $\psi$ and therefore defines a functor $\ssc:\cstb_\psi(W)\to \stss_\psi(W)$.  Furthermore, this yields a well-defined map of presheaves $\ssc:\cstb_\psi\to \stss_\psi$.
\end{prop}

\begin{proof}
 Let $\tilde{P}$, $\tilde{\pi}$, $\sim$, etc. be the same objects as defined in the previous steps.  To show that $\ssc(P,\omega)$, the partitioned symplectic toric space over $\psi$ defined in Step 2, is in fact a symplectic toric stratified space over $\psi$, we must show that
\begin{enumerate}
	\item $\ssc(P,\omega)$ is a stratified space with isolated singularities; and
	\item each singularity of $\ssc(P,\omega)$ has a neighborhood isomorphic to the neighborhood of the singularity in a singular symplectic toric cone.
\end{enumerate}
As neighborhoods of the singularity of any singular symplectic toric cone are homeomorphic to $c(L)$ for some compact manifold $L$, point (1) is subsumed by point (2).  

So fix a singularity $p_\alpha$ of $\tilde{P}$ lying over $w_\alpha$ in $W$. Then by Lemma \ref{l:pullbackdefncstb}, there exists an open neighborhood $U$ of $w_\alpha$, a homogeneous local trivialization datum $\varphi:U\to c(L)$ with homogeneous unimodular local embedding $\phi:L\times \R\to \fg^*$ satisfying $\psi|_{\reg{U}}=\phi\circ\varphi+\psi(w_\alpha)$, a homogeneous symplectic toric bundle $(\varpi : Q \to L \times \R,\eta) \in \hstb_\phi(L\times \R)$, and a $G$-equivariant isomorphism $f:(P,\omega)|_U\to \varphi^*(Q,\eta)$ in $\cstb_\psi(U)$.

Since $\ssc(\varphi^*(Q,\eta))=\varphi^*\ssc(Q,\eta)$ and $\ssc((P,\omega)|_U) = \ssc(P,\omega)|_U$ (see Lemma \ref{l:ssccommuteswithpullbacks}), we have an isomorphism
\[\ssc(f):\ssc(P,\omega)|_U\to \varphi^*\ssc(Q,\eta)\]
Thus, we have an isomorphism from $\ssc(P,\omega)|_U$ to $\varphi^*(Q,\eta)$, a neighborhood of the singularity of the singular symplectic toric cone $\ssc(Q,\eta)$.  It follows that $\ssc(P,\omega)$ is a symplectic toric stratified space over $\psi$.

Therefore, since $\stss_\psi(W)$ is a full subcategory of $\psts_\psi(W)$ (see Remark \ref{r:stssfullsubcat}), it follows that $\ssc:\cstb_\psi(W)\to \psts_\psi(W)$ restricts to a functor $\ssc:\cstb_\psi(W)\to \stss_\psi(W)$.  Furthermore, since $\ssc$ commutes with restriction (see Lemma \ref{l:ssccommuteswithpullbacks}), $\ssc$ is in fact a map of presheaves $\ssc:\cstb_\psi\to \stss_\psi$.
\end{proof}

% We finally define $\ssc$.

Now, we begin to show that $\ssc$ is an isomorphism of presheaves. To start, we show $\ssc$ is fully faithful.

\begin{lemma}\label{l:sccfullyfaithful} 
For any stratified unimodular local embedding $\psi:W\to \fg^*$, $\ssc:\cstb_\psi(W)\to \stss_\psi(W)$ is fully faithful.
\end{lemma}

\begin{proof}
Let $\iota:\cstb_\psi(W)\to \stb_{\psi|_{\reg{W}}}(\reg{W})$ be the forgetful functor (as defined in Definition \ref{d:forgetfulfunctor}). Let $\res:\stss_\psi(W)\to \stm_{\psi|_{\reg{W}}}(\reg{W})$ be the restriction functor; i.e., the functor which restricts $(X,\omega,\pi:X\to W)$, a symplectic toric stratified space over $\psi$, to the open and dense symplectic toric manifold over $\psi|_{\reg{W}}$ contained within.

Since $\iota$ and $\cut$ are both fully faithful (see Lemma \ref{l:forgetfulfunctorff} and Theorem \ref{t:cutequiv}, respectively), $\cut\circ \iota$ is fully faithful as well. Note
that the diagram
\[\xymatrix{\cstb_\psi(W)\ar[r]^{\iota} \ar[d]_{\ssc} & \stb_{\psi|_{\reg{W}}}(\reg{W}) \ar[d]^{\cut} \\ \stss_\psi(W)\ar[r]_{\res} & \stm_{\psi|_{\reg{W}}}(\reg{W})}\]
commutes. It follows that $\res\circ \ssc$ is also fully faithful. Notice that $\res$ is fully faithful: any map
of symplectic toric manifolds 
\[g : (X_1 ,\omega_1 ,\pi_1 : X_1 \to W)|_{\reg{W}} \to (X_2 ,\omega_2, \pi_2 : X_2 \to W)|_{\reg{W}}\] 
on the open and dense top strata of two symplectic toric stratified spaces over $\psi$ must extend to a map of symplectic toric stratified spaces which, by virtue of continuity, is uniquely determined.  To see this, notice that such a map defines a map between the symplectic toric cone local models for $(X_1,\omega_1 ,\pi_1)$ and $(X_2 ,\omega_2, \pi_2)$ on deleted neighborhoods of each singularity which, by Lemma \ref{l:morphismsextend}, must extend to maps of singular symplectic toric cones.  Hence, $\ssc$ must be fully faithful.
\end{proof}

We now show the elements of $\stss_\psi$ are locally isomorphic to elements of the image of $\ssc$.

\begin{lemma}\label{l:localisopropertystss}
Let $\psi:W\to \fg^*$ be a stratified unimodular local embedding. Then for any symplectic toric stratified space $(X, \omega,\pi:X\to W)$ over $\psi$ and for any point $w\in W$, there is an open neighborhood $U_w$ of $w$ and a conical symplectic toric bundle $(\varpi : P \to \reg{(U_w)} ,\eta)$ in $\cstb_\psi(U_w)$ so that $\ssc(P,\eta)$ is isomorphic to $(X,\omega,\pi)|_{U_w}$.
\end{lemma}

\begin{proof}
First, assume $w\in \reg{W}$.  Let $U_w$ be a contractible open neighborhood of $w$ small enough so that $U_w \subset \reg{W}$. Since $U_w \subset \reg{W}$, $\stss_\psi(U_w) = \stm|_{\psi|_{\reg{W}}}(U_w)$. Since $U_w$ is contractible, all elements of $\stm_{\psi|_{\reg{W}}}(U_w)$ are isomorphic. Finally, since $\cstb_\psi(U_w)=\stb_{\psi|_{\reg{W}}}(U_w)$, $\cstb_\psi(U_w)$ is nonempty and therefore $\stm|_{\psi|_{\reg{W}}}(U_w)$ contains elements of the image of $\ssc$.

So assume $w$ is a singularity of $W$. Let $U_w$ be any neighborhood of $w$ for which there exists a singular symplectic toric cone $(C,\omega',\nu:C\to \fg^*)$ with neighborhood $V$ of the vertex $*$ of $C$, and a map $\varphi:\pi^{-1}(U_w)\to V$ satisfying the conditions described in Definition \ref{d:stss}.

For orbital moment map $\bar{\nu}:C/G\to \fg^*$, $\bar{\nu}|_{\reg{(C/G)}}$ is a homogeneous unimodular local embedding. It follows that, since $\hc$ is an isomorphism of presheaves, there exists a homogeneous symplectic toric bundle $(\pi:P\to \reg{(C/G)},\alpha)$ over $\bar{\nu}$ for which $\hc(P,\alpha) = (C,\omega',\bar{\pi}:C\to \reg{(C/G)})$.

Via Proposition \ref{p:homogconn} and Proposition \ref{p:hstbnohorclass}, we may assume that $\alpha = d\langle \bar{\nu}\circ \pi,A\rangle$ for $A$ an $\R$-invariant connection 1-form on $P$. As noted in the proof of Proposition \ref{p:stssommasule}, $\varphi$ descends to an isomorphism $\bar{\varphi}:U_w\to C/G$ satisfying $\psi|_{U_w}=\bar{\nu}\circ \bar{\varphi}+\psi(w)$. Let $(\varpi:Q\to \reg{(U_w)})$ pullback of $\pi:P\to \reg{(C/G)}$ to $\reg{(U_w)}$ via $\bar{\varphi}$ at let $\tilde{\varphi}:Q\to P$ be the corresponding isomorphism
covering $\bar{\varphi}$. Let $A'=\tilde{\varphi}^*A$. Then
\[\tilde{\varphi}^*d\langle\bar{\nu}\circ \pi, A\rangle = d\langle \bar{\nu}\circ \pi \circ \tilde{\varphi},A'\rangle=d\langle \bar{\nu}\circ \bar{\varphi}\circ \varpi, A'\rangle = d\langle 
\psi \circ \varpi, A'\rangle - d\langle \psi(w),A'\rangle\]

$d\langle \psi(w),A'\rangle$ is basic: to see this, it is enough to show that it is $G$-invariant and horizontal.  It is $G$-invariant since $G$ is commutative, meaning $A'$ is $G$-invariant. To see it is horizontal, notice that, for any $X\in \fg$ with associated fundamental vector field $X_P$ on $P$,
\[\iota_{X_P}d\langle \psi(w),A'\rangle =  \mathcal{L}_{X_P}\langle \psi(w),A'\rangle - d\iota_{X_P}\langle \psi(w),A'\rangle = 0-d\langle \psi(w),X\rangle = 0\]

So, since $d\langle \psi(w),A'\rangle$ is basic, there exists a closed 2-form $\beta$ on $\reg{(U_w)}$ with $\pi^*\beta = -d\langle \psi(w),A'\rangle$.  As shown by Karshon and Lerman in \cite{KarshonLerman} (see Lemma \ref{l:KLbijectionlemma}), $d\langle\psi\circ \varpi,A'\rangle+\pi^*\beta$ is a $G$-invariant symplectic form on $Q$ with moment map $\psi\circ\varpi$. Therefore, $(Q,\langle\psi\circ \varpi,A'\rangle+\pi^*\beta)$ is a symplectic toric bundle and, since $(Q,\langle\psi\circ \varpi,A'\rangle+\pi^*\beta) = \varphi^*(P,d\langle \bar{\nu}\circ \pi, A\rangle)$, it follows that $(Q,\langle\psi\circ \varpi,A'\rangle+\pi^*\beta)$ is a conical symplectic toric bundle over $\psi|_{U_w}$.

Finally, note that
\[\ssc(Q,\langle\psi\circ \varpi,A'\rangle+\pi^*\beta) = \ssc(\varphi^*(P,d\langle\bar{\nu}\circ\pi\rangle))= \varphi^*\ssc(P,d\langle\pi\circ\bar{\nu}\rangle)\cong (X,\omega,\pi)|_{U_w}\]
\end{proof}

We need one more lemma before we can finish.

\begin{lemma}\label{l:stssaprestack}
Let $\psi:W\to \fg^*$ be a stratified unimodular local embedding. Then the presheaf of groupoids $\stss_\psi:\op{\open(W)}\to \gpoid$ is a prestack (see Definition \ref{d:prestack}).
\end{lemma}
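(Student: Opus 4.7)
The plan is to mirror the proof of Lemma~\ref{l:stcaprestack} (the prestack property for $\stc_\psi$), adapting it to the stratified setting. Fix an open subset $U\subset W$ and two symplectic toric stratified spaces $(X,\omega,\pi:X\to U)$ and $(X',\omega',\pi':X'\to U)$ over $\psi|_U$. I need to show that the presheaf of sets
\[
\underline{\sf Hom}((X,\omega,\pi),(X',\omega',\pi')):\open(U)^{op}\to \sets,\quad V\mapsto {\sf Hom}_{\stss_\psi(V)}((X,\omega,\pi)|_V,(X',\omega',\pi')|_V)
\]
is a sheaf. Uniqueness of gluing is automatic: any morphism in $\stss_\psi(V)$ is a continuous map $X|_V \to X'|_V$ and such maps are determined pointwise, hence by their restrictions to any open cover.

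For existence, fix an open cover $\{V_\alpha\}_{\alpha\in A}$ of a given $V\in\open(U)$ together with a coherent family of isomorphisms $\{f_\alpha:(X,\omega,\pi)|_{V_\alpha}\to (X',\omega',\pi')|_{V_\alpha}\}$. Since $\pi$ is continuous, $\{\pi^{-1}(V_\alpha)\}$ is an open cover of $\pi^{-1}(V)$; the local coherence of the $f_\alpha$ then produces a unique continuous map $f:\pi^{-1}(V)\to \pi'^{-1}(V)$ with $f|_{\pi^{-1}(V_\alpha)}=f_\alpha$. Each of the defining properties of a morphism in $\stss_\psi(V)$ is local and therefore transfers to $f$: $G$-equivariance, the identity $\pi'\circ f=\pi$, the fact that $f$ sends $\reg{X}\cap \pi^{-1}(V)$ into $\reg{X'}\cap \pi'^{-1}(V)$ and sends each singularity to the singularity of $X'$ lying over the same point of $W$ (the latter following from the fact that $\pi$ identifies the singular set of $X$ bijectively with that of $W$), and the fact that $f|_{\reg{X}\cap\pi^{-1}(V)}$ is a symplectomorphism (since symplecticity is checked pointwise on $\reg{X}$).

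The collection $\{f_\alpha^{-1}\}$ glues in the same way to a continuous $G$-equivariant map $g:\pi'^{-1}(V)\to \pi^{-1}(V)$; local coherence forces $g\circ f$ and $f\circ g$ to agree with the identities on each $V_\alpha$, so $g=f^{-1}$ and $f$ is a homeomorphism restricting to a symplectomorphism on the regular part. Hence $f$ is an isomorphism in $\stss_\psi(V)$, and this glued isomorphism is unique by the first paragraph.

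I do not anticipate a genuine obstacle here: every verification reduces to a local condition, and the compatibility of the singular and regular strata under gluing is guaranteed by the fact that each $f_\alpha$ is already a morphism of stratified spaces preserving the quotient to $V_\alpha\subset W$. The only mildly delicate point is confirming that the glued $f$ still preserves the stratification globally, but this follows at once from the observation that the singular strata of $X$ and $X'$ are discrete subsets in bijection with the singularities of $W$ lying in $V$, and each $f_\alpha$ already realizes this bijection over $V_\alpha$.
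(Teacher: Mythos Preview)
Your proposal is correct and follows essentially the same approach as the paper's own proof: glue the coherent local isomorphisms to a continuous map, then verify locally that it is $G$-equivariant, preserves the quotient maps, and restricts to a symplectomorphism on the regular parts. Your treatment is slightly more explicit than the paper's in two places---you spell out why the glued map preserves the stratification and you glue the inverses separately to confirm $f$ is a homeomorphism---but these are minor elaborations of the same argument rather than a different route.
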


\begin{proof}
Let $U\subset W$ be any open subset and let $(X,\omega,\pi:W\to U)$ and $(X',\omega',\pi':X'\to U)$ be two symplectic toric stratified spaces over $\psi|_U$. As any map $f:(X,\omega,\pi)\to (X',\omega',\pi')$  in $\stss_\psi(U)$ is, in particular, a homeomorphism $f:X\to X'$, $f$ is uniquely determined by its restrictions to any open cover of $U$. Since the various properties of maps in $\stss_\psi$ glue, it follows that $U\mapsto {\sf Hom}(X|_U,X'|_U)$ is a sheaf of sets.
\end{proof}

We may now prove that $\ssc$ is an isomorphism of presheaves.

\begin{thm}\label{t:sscaniso} 
For any stratified unimodular local embedding $\psi:W\to \fg^*$, the map of presheaves $\ssc:\cstb_\psi\to \stss_\psi$ is an isomorphism of presheaves.
\end{thm}

\begin{proof}
We've shown in in Lemma \ref{l:ssccommuteswithpullbacks} that $\ssc$ is a map of presheaves, in Lemma \ref{l:sccfullyfaithful} that $\ssc_U:\cstb_\psi(U)\to \stss_\psi(U)$ is fully faithful for every $U$, and in Lemma \ref{l:stssaprestack} that $\stss_\psi$ is a prestack. In Proposition \ref{p:cstbastack}, we explain why $\cstb_\psi$ is a stack.  Therefore, with Lemma \ref{l:localisopropertystss}, we have that $\ssc:\cstb_\psi\to \stss_\psi$ satisfies the hypotheses of Lemma \ref{l:isoofstackstechlemma} and is an
isomorphism of presheaves.
\end{proof}

%%%%%%%%%%%%%%%%%%%%%%%%%%%%%%%%%%%%%%%%%
\section{Characteristic classes for symplectic toric stratified spaces with isolated singularities}\label{s:ccforstss}
%%%%%%%%%%%%%%%%%%%%%%%%%%%%%%%%%%%%%%%%%

Now we describe characteristic classes for $\cstb_\psi(W)$ and hence, via our equivalence of categories $\ssc : \cstb_\psi(W)\to \stss_\psi(W)$, characteristic classes for $\stss_\psi(W)$.  Instead of creating new classes for $\cstb_\psi(W)$, we will use the forgetful functor $\iota:\cstb_\psi(W)\to \stb_{\psi|_{\reg{W}}}(\reg{W})$ together with the characteristic classes for symplectic toric bundles established by Karshon and Lerman for symplectic toric bundles (again, see Section \ref{s:KL}). We will then identify exactly which cohomology classes in $H^2(\reg{W},\Z_G\times \R)$ correspond to symplectic toric stratified spaces.

To begin, let’s formally define horizontal classes for conical symplectic toric bundles.

\begin{defn}
 Let $\psi:W\to \fg^*$ be a stratified unimodular local embedding. Then for $(\pi:P\to \reg{W},\omega)$ a conical symplectic toric bundle over $\psi$, define
 \[\shor([P,\omega]):=\chor([\iota(P,\omega)])\in H^2(\reg{W},\R) \]
for $\chor$ the horizontal class for symplectic toric bundles (see Definition \ref{d:chor}) and $\iota:\cstb_\psi(W)\to \stb_{\psi|_{\reg{W}}}(\reg{W})$ the forgetful functor.
\end{defn}

We now show that, at the very least, the Chern class and horizontal class together give a manner to distinguish isomorphism classes of conical symplectic toric bundles.

\begin{prop}\label{p:charclassesinjective}
The first Chern class $\chern$ and the horizontal class $\shor$ define an injective morphisms of presheaves of sets over $\reg{W}$
\[(\chern,\shor):\pi_0\cstb_\psi\to H^2(\cdot,\Z_G\times \R)\cong H^2(\cdot,\Z_G)\times H^2(\cdot,\R)\]
\end{prop}

\begin{proof}
Since the forgetful functor $\iota:\cstb_\psi(W)\to \stb_{\psi|_{\reg{W}}}(\reg{W})$ is fully faithful (see Lemma \ref{l:forgetfulfunctorff}), $\iota$ does not identify non-isomorphic elements of $\cstb_\psi(W)$. Therefore, since the characteristic class of Karshon and Lerman $(\chern,\chor):\pi_0\stb_{\psi|_{\reg{W}}}\to H^2(\cdot,\Z_G\times \R)$ produces an isomorphism of presheaves, and since for any conical symplectic toric bundle $(P,\omega)$ we have $(\chern,\shor)([P,\omega])=(\chern,\chor)([\iota(P,\omega)])$, it follows that $(\chern,\shor)$ must be injective.
\end{proof} 

In general, we will not be able to say in general that $(\chern,\shor)$ is surjective.  Indeed, note that a conical symplectic toric bundle must, in particular, be exact in a deleted neighborhood of each singularity.  Since the horizontal class gives an obstruction to a symplectic toric bundle to be exact, one might suspect that the image of $(\chern,\shor)$ are exactly those with $\shor$ exact near each singularity.  However, this turns out to be true only for flat bundles.  As we will see below, the curvature of the given bundle and the value of $\psi$ at each singularity imposes a particular condition on the horizontal classes associated to $\psi$.
%  However, we will be able to identify the image of $(\chern,\shor)$ as the kernel of the following map.

% % \begin{lemma}
% % Let $W$ be a cornered stratified space with isolated singularities $\{w_\alpha\}_{\alpha\in A}$ having links $L_\alpha$.  Then there is a short exact sequence in singular cohomology
% % \[\xymatrix{0 \ar[r] H^2_{\sf sing}(W,\R)\ar[r]^{\iota^*} & H_{\sing}^2(\reg{W},\R) \ar[r]^{{\sf res}} & H_{\sf sing}^2\left(\coprod_{\alpha\in A} L_\alpha\right) \ar[r] & 0}\]
% % for $\iota:\reg{W}\to W$ the inclusion map and ${\sf res}$ the restriction of a singular cochain $\sigma$ on $\reg{W}$ to a slice $L_\alpha$ of the cone. 
% % \end{lemma}

% % \begin{proof}
% % First, for each singularity $w_\alpha$, recall by definition there exists a compact manifold with corners $L_\alpha$, neighborhood $U_\alpha$ of $w_\alpha$, and open embedding $\varphi_\alpha:U_\alpha \to c(L_\alpha)$ taking $w_\alpha$ onto the vertex of $c(L_\alpha)$.  As each $c(L_\alpha)$ is contractible and since $U_\alpha\cap \reg{W}$ is homotopy equivalent to $L_\alpha$ for each $\alpha$, the standard Mayer-Vietoris long exact sequence reduces to the following:
% % \[ \xymatrix{\ldots \ar[r] & H^1(\displaystyle \coprod_\alpha L_\alpha,\R) \ar[r] & H_{\sf sing}^2(W,\R) \ar[r]^-{\iota^*} &  H^2(\reg{W},\R) \ar[r] & H^2(\displaystyle \coprod_\alpha L_\alpha,\R) \ar[r] & \ldots  }\]

% % \end{proof}

To demonstrate this boundary condition, we will need the following theorem for principal bundles in the category of manifolds with corners; this is of course well-known for topological principal bundles.

\begin{thm}\label{t:homotopyinv}
Let $\pi:E\to B$ be a principal $G$-bundle of manifolds with corners and, for a manifold with corners $X$, let $f_0:X\to B$ and $f_1 : X \to B$ be two smoothly homotopic maps. Then the pullbacks $f_0^*E$ and $f^*_1E$ are isomorphic as principal $G$-bundles in the category of manifolds with corners.
\end{thm}

\begin{proof}
First, recall that, in the manifolds without corners case, this theorem can be proven via the flow of the horizontal lift of $\frac{d}{dt}$ on the pullback bundle $f_t^*E\to X\times[0,1]$ after any choice of equivariant connection on $f_t^*E$.  

To see that this argument extends to the manifolds with corners case, notice that via local triviality $f_t^*E\to X\times[0,1]$ decomposes into principal bundles of manifolds without corners $S^\ell(f_t^*E)\to S^\ell(X\times[0,1])$ (again, here $S^{\ell}$ refers to the manifold of points with index or depth $\ell$).  Since $\frac{d}{dt}$ is tangent to each $S^{\ell}(X\times[0,1])$ (excluding at $t=0,1$), it follows that its horizontal lift will be tangent to each submanifold of constant index in $f_t^*E$.  Hence, we may reduce to the case without corners.
\end{proof}

Finally, we provide the necessary and sufficient boundary conditions each horizontal class associated to a conical symplectic toric bundle over a given stratified unimodular local embedding $\psi$ any horizontal class must satisfy.  This will end up being enough to fully describe the image of $(\chern,\shor)$ in $H^2(\reg{W},
\Z_G\times \R)$.  First, we exhibit the following class of real valued 2-form associated to any principal bundle $\pi:P\to \reg{W}$.

\begin{rem}\label{r:Chernclasspairing}
Given a principal $G$-bundle $\pi:P\to M$ over a manifold (with corners) $M$, it is well known that one method for calculating the first Chern class of $P$ is to pick a connection $A \in \Omega^1(P,\fg)$, find the class $[\eta]\in H^2(M,\fg)$ with $\pi^*\eta = dA$, and prove that $[\eta]$ actually came from a class $H^2(M,\Z_G)$.  

On the other hand, given a fixed element $\xi \in \fg^*$, note that the pairing $\langle \xi, \eta \rangle$ yields a 2-form $\Omega^2(M,\R)$.  Furthermore, by linearity of the canonical pairing $\langle \cdot, \cdot \rangle$, it follows that this form is closed with class uniquely determined by the class of $\eta$.  Therefore, it makes sense to write $\langle \xi,[\chern(P)]\rangle \in H^2(M,\R)$ with the understanding that this represents the class $[\langle \xi, \eta\rangle]$.
\end{rem} 

\begin{prop}\label{p:conicalboundarycond}
Let $\psi:W\to \fg^*$ be a homogeneous unimodular local embedding and $(\pi:P\to \reg{W},\omega)$ a symplectic toric bundle over $\psi|_{\reg{W}}$.  Then $(P,\omega)$ is a conical symplectic toric bundle precisely when for each singularity $w_\alpha$ of $W$, there exists a deleted open neighborhood $U_\alpha$ of $w_\alpha$ such that
\begin{equation}\label{eq:localconicalcondition}\langle\psi(w_\alpha), \chern(P)|_{U_\alpha}\rangle  = - [\chor(P,\omega)]|_{U_\alpha} \end{equation}
In particular, if $P$ is flat, $(P,\omega)$ is a conical symplectic toric bundle precisely when $\chor(P,\omega)$ is exact in a deleted open neighborhood of each singularity.
\end{prop}

\begin{proof}
First, suppose that $(P,\omega)$ is a conical symplectic toric bundle.   Fix a singularity $w_\alpha$ with homogeneous local trivialization datum $\varphi_\alpha:U_\alpha\to c(L_\alpha)$ and with homogeneous unimodular local embedding $\nu_\alpha:L_\alpha\times \R\to \fg^*$ satisfying $\psi|_{\reg{(U_\alpha)}} = \nu_\alpha\circ \varphi_\alpha+\psi(w_\alpha)$. Then, by Lemma \ref{l:pullbackdefncstb}, there exists a homogeneous symplectic toric bundle $\varpi:Q\to L_\alpha\times \R$ and an isomorphism $\tilde{\varphi}_\alpha:(P,\omega)|_{\reg{(U_\alpha)}} \to \varphi_\alpha^*(Q,\eta)$ in $\stb_{\psi|_{\reg{(U_\alpha)}}}(\reg{(U_\alpha)})$.  By the naturality of $\chor$, we may proceed by showing that $\varphi_\alpha^*Q$ has the desired property.  

By Proposition \ref{p:hstbnohorclass}, we may assume that $\eta = d\langle \nu_\alpha\circ\varpi,A\rangle$, for $A$ an appropriately chosen connection on $Q$, as in Proposition \ref{p:homogconn}.  Then for $\tilde{\varphi}_\alpha:\varphi_\alpha^*Q\to Q$ the morphism covering $\varphi_\alpha$, we have 
\[\tilde{\varphi}_\alpha^*d\langle \nu_\alpha\circ \varpi,A\rangle = d\langle \psi|_{\reg{(U_\alpha)}}\circ \varpi-\psi(w),\tilde{\varphi}_\alpha^*A\rangle = d\langle \psi|_{\reg{(U_\alpha)}}\circ \varpi,\tilde{\varphi}^*_\alpha A\rangle - d\langle \psi(w),\tilde{\varphi}_\alpha^*A\rangle\]
Since $\tilde{\varphi}_\alpha^*A$ is a connection on $\varphi_\alpha^*Q$, it follows by definition that $-d\langle \psi(w),\tilde{\varphi}_\alpha^*A\rangle$ is exactly the form that descends to $\chor(\varphi_\alpha^*Q)$.  On the other hand, as discussed in Remark \ref{r:Chernclasspairing}, $-d\langle \psi(w_\alpha),\varphi_\alpha^*A\rangle = -\langle \psi(w_\alpha),\varphi_\alpha^*dA\rangle$ descends to the cohomology class $-\langle \psi(w_\alpha),\chern(P)|_{U_\alpha}\rangle$.

Conversely, suppose that a symplectic toric bundle $(\pi:P\to \reg{W}, \omega)$ over $\psi|_{\reg{W}}$ satisfies the hypothesized local condition \eqref{eq:localconicalcondition} on deleted open neighborhoods $U_\alpha$ of each singularity $w_\alpha$ of $W$. By shrinking each neighborhood $U_\alpha$ as necessary, we may assume each $U_\alpha$ is the domain of a homogeneous local structure datum $\varphi_\alpha:U_\alpha\to c(L_\alpha)$ with corresponding homogeneous unimodular local embedding $\nu_\alpha:L_\alpha\times \R\to \fg^*$ satisfying $\psi|_{\reg{(U_\alpha)}} = \nu_\alpha \circ \varphi_\alpha+\psi(w_\alpha)$.  Furthermore, again by shrinking $U_\alpha$ as necessary, we may assume $\varphi_\alpha$ maps $U_\alpha $ onto $L_\alpha\times (-\infty,\varepsilon)$ for some constant $\varepsilon$.
% Fix a bundle $\pi:P\to \reg{W}$ with Chern class $\chern(P)=[\beta]$.  We must construct a symplectic form on $P$ with respect to which $(P,\omega)$ is a conical symplectic toric bundle over $\psi$ and for which the horizontal class $\shor(P,\omega)=[\gamma]$. So let $w_\alpha$ be a singularity of $W$ with homogeneous local structure datum $\varphi_\alpha:U_\alpha\to c(L_\alpha)$ and homogeneous unimodular local embedding $\nu_\alpha:L_\alpha\times \R\to \fg^*$. By shrinking $U_\alpha$ as necessary, we may assume that $\varphi_\alpha$ maps $U_\alpha$ onto $L_\alpha\times (-\infty,\varepsilon)$ for some constant $\varepsilon$.

Fix $\tau\in \R$ with $\tau<\varepsilon$ and let $\Sigma := \varphi^{-1}(L_\alpha\times \{\tau\})$. Define $Q_\alpha :=P|_\Sigma$. Since $U_\alpha$ is isomorphic to $L_\alpha\times (-\infty,\varepsilon)$, it follows that $U_\alpha$ admits a retraction onto $\Sigma$; thus, via Theorem \ref{t:homotopyinv}, we may conclude that there exists an isomorphism of principal $G$-bundles $\tilde{\varphi}_\alpha:P|_{\reg{(U_\alpha)}}\to Q_\alpha \times (-\infty,\varepsilon)$ covering $\varphi_\alpha$.  

To finish, note that $\varpi:Q_\alpha \times \R \to L_\alpha\times \R$ satisfies the conditions of Proposition \ref{p:homogconn}; hence, there exists a connection $A'$ on $Q_\alpha\times \R$ such that $(Q_\alpha\times \R,d\langle \nu_\alpha \circ \varpi, A'\rangle)$ is a homogeneous symplectic toric bundle over $\nu_\alpha$. Let $A_\alpha$ be the pullback of $A'$ to $P|_{\reg{(U_\alpha)}}$.

Similar to the calculation from above, notice we have
\[\tilde{\varphi}_\alpha^*d\langle\nu_\alpha\circ \varpi,A'\rangle=d\langle \psi\circ \pi-\psi(w_\alpha),A_\alpha\rangle = d\langle \psi\circ \pi,A_\alpha\rangle -d\langle \psi(w_\alpha),A_\alpha\rangle \]
and thus the horizontal class of $(P|_{\reg{(U_\alpha)}},\tilde{\varphi}_\alpha^*d\langle\nu_\alpha\circ \varpi,A'\rangle)$ is $-\langle \psi(w_\alpha),[\chern(P)]|_{\reg{(U_\alpha)}}\rangle$.  By assumption, this is also the horizontal class of $(P,\omega)|_{\reg{(U_\alpha)}}$ and therefore $(P,\omega)|_{\reg{(U_\alpha)}}$ is isomorphic as a symplectic toric bundle to $\tilde{\varphi}_\alpha^*(Q_\alpha\times \R,d\langle\nu_\alpha\circ \varpi,A'\rangle)$.  It follows again via Lemma \ref{l:pullbackdefncstb} that $(P,\omega)$ satisfies the necessary local condition to be a conical symplectic toric bundle.  
\end{proof}

Now, we may prove the main classification of this paper.\\

\noindent {\bf Theorem \ref{t:maintheorem}.}  Let $\psi:W\to \fg^*$ be a stratified unimodular local embedding. Then the set of isomorphism classes of symplectic toric stratified spaces with isolated singularities $(X,\omega,\mu:X\to \fg^*)$ with $G$-quotient map $\pi:X\to W$ and orbital moment map $\psi$ is naturally isomorphic to a subspace $\Cc$ of the cohomology classes $H^2(\reg{W},\Z_G\times \R)$.  Here, $\Z_G$ denotes the integral lattice of $G$ (the kernel of $\exp:\fg \to G$) and $\reg{W}$ denotes the top stratum of $W$.  

In particular:
\begin{itemize}
\item If $\dim(G)\neq 3$, $\Cc = H^2(\reg{W},\Z_G\times \R).$
\item If $\dim(G)=3$, $\Cc$ is an extension of $H^2(\reg{W},\Z_G)$ by $H^2(W,\R)$. 
\end{itemize}

\begin{proof}
 By Theorem \ref{t:sscaniso}, $\ssc:\cstb_\psi\to \stss_\psi$ is an isomorphism of presheaves from the presheaf of of conical symplectic toric bundles over $\psi$ to the presheaf of symplectic toric stratified spaces over $\psi$. This descends to an isomorphism of presheaves of sets $\pi_0\ssc:\pi_0\cstb_\psi \to \pi_0\stss_\psi$.  Proposition \ref{p:charclassesinjective} establishes that we have an injective morphism of presheaves $(\chern,\shor):\pi_0\cstb_\psi \to H^2(\cdot,\Z_G\times \R)$.  

 Let $\mathcal{C}$ denote the image of $(\chern,\shor)$ in $H^2(\reg{W},\Z_G\times \R)$.  By Proposition \ref{p:conicalboundarycond}, it follows that the classes in $\mathcal{C}$ are precisely those satisfying boundary condition \eqref{eq:localconicalcondition}.  Notice this condition is checked on deleted open neighborhoods of the singularities.  Since every such neighborhood $U_\alpha$ near a singularity $w_\alpha$ is (after shrinking as necessary) homotopic to $L_\alpha$, for $L_\alpha$ the link of $W$ at $w_\alpha$, it follows that $\mathcal{C}$ is highly determined by the cohomology on the links.  From here, we split into cases by dimension.

 In the case where $\dim(G)\neq 3$: since $\ssc$ is essentially surjective, it follows the links of $W$ must be homeomorphic to (disjoint unions of) such $G$-quotients.  As proven by Lerman \cite{LermanCTM}, the $G$-quotients of connected contact toric manifolds of dimension greater than 3 (hence $\dim(G)>2$) are either spheres or contractible.   Hence, for $\dim(G)>3$, the links of $W$ are contractible or are disjoint unions of spheres of dimension at least 3 while $\dim(G)<3$, the links of $W$ have dimension at most 1.  Therefore, in either case, the links satisfy $H^2(L_\alpha,\R)=0$ and thus boundary condition \eqref{eq:localconicalcondition} is always trivially satisfied.

 In the case where $\dim(G)=3$: denote by $p:\mathcal{C}\to H^2(\reg{W},\Z_G)$ the restriction of the projection $H^2(\reg{W},\Z_G)\times H^2(\reg{W},\R)\to H^2(\reg{W},\Z_G)$ and $i:H^2(W,\R)\to H^2(\reg{W},\Z_G)\times H^2(\reg{W},\Z_G)$ the map $[\sigma]\mapsto (\iota^*[\sigma],0)$, for $\iota^*:H^2(W,\R)\to H^2(\reg{W},\R)$ (i.e., for $\iota:\reg{W}\to W$ the inclusion map).  Here, $H^2(W,\R)$ denotes the singular cohomology of $W$ with values in $\R$ and we are implicitly passing between the de Rham cohomology of $H^2(\reg{W},\R)$ and its singular cohomology valued in $\R$. We will show that $i$ takes values in $\mathcal{C}$ and that the corresponding sequence
 \begin{equation}\label{eq:the3DSES}\xymatrix{0\ar[r] & H^2(W,\R)\ar[r]^-i & \mathcal{C} \ar[r]^-p & H^2(\reg{W},\Z_G)\ar[r] & 0}\end{equation}
is exact.

 For each singularity $w_\alpha$ of $W$, pick a neighborhood $U_\alpha$ of $w_\alpha$ in $W$ homeomorphic to an open neighborhood of the vertex of $C(L_\alpha)$.  By shrinking the $U_\alpha$s as necessary, we have that each intersection $\reg{W}\cap U_\alpha$ is homeomorphic to $L_\alpha\times \R$.  Using this fact together with the fact that the $U_\alpha$s are contractible, one can show that the Mayer-Vietoris sequence in singular cohomology associated to the open cover $\{\left(\coprod_\alpha U_\alpha\right),\reg{W}\}$ reduces to the sequence
 \[\xymatrix{\ldots \ar[r] & H^1(\coprod_\alpha L_\alpha,\R) \ar[r] & H^2(W,\R) \ar[r] & H^2(\reg{W},\R) \ar[r] & H^2(\coprod_{\alpha} L_\alpha,\R)\ar[r] & \ldots}\]
 Again, via the classification of Lerman \cite{LermanCTM}, we have that the links of $W$ are disjoint unions of 2-spheres or contractible manifolds with corners.  Hence, the links $L_\alpha$ of $W$ have zero cohomology outside of degree 2 and the above long exact sequence reduces to the short exact sequence
 \[\xymatrix{0 \ar[r] & H^2(W,\R) \ar[r]^{\iota^*} & H^2( \reg{W},\R) \ar[r]^r & H^2(\coprod_\alpha L_\alpha,\R) \ar[r] & 0}\]
 for $r:H^2(\reg{W},\R) \to H^2(\coprod_\alpha L_\alpha,\R)$ the restriction of a class over $\reg{W}$ to a slice near each singularity homeomorphic to a link.  In particular, this shows us that $\iota^*$ is injective with image precisely those classes which are exact near each singularity.  Therefore, via Proposition \ref{p:conicalboundarycond}, we have that $\iota^*$ maps $H^2(W,\R)$ onto the horizontal classes associated to flat conical symplectic toric bundles and thus the map $i:H^2(W,\R)\to H^2(\reg{W},\R)\times H^2(\reg{W},\Z_G)$ takes values in $\mathcal{C}$.  Additionally, since $\iota^*$ is injective, $i$ is injective as well and since the image of $\iota^*$ yields {\em all} horizontal classes of flat bundles, it follows that \eqref{eq:the3DSES} is exact at $\mathcal{C}$.

 Finally, it remains to be shown that $p$ of $\eqref{eq:the3DSES}$ is surjective.  It's enough to show that every principal $G$-bundle $\pi:P\to \reg{W}$ admits a symplectic form with respect to which $(P,\omega)$ is a conical symplectic toric bundle.  But note that since the retraction $r:H^2(\reg{W},\R) \to H^2(\coprod_\alpha L_\alpha,\R)$ is surjective, any class of forms defined near the singularities of $W$ extends to a class on all $W$.  In particular,the local degree 2 classes $-\langle \psi(w_\alpha),\chern(P)|_{\reg{(U_\alpha)}}\rangle$ from the boundary conditions \eqref{eq:localconicalcondition} of Proposition \ref{p:conicalboundarycond} extend to a global degree 2 class over all $\reg{W}$.  This global class necessarily corresponds to the horizontal class of a conical symplectic toric bundle structure on $P$.
\end{proof}

\begin{rem}\label{r:SEStoprincbudle}
As with any group extension, the sequence \eqref{eq:the3DSES} induces a principal $H^2(W,\R)$-bundle structure on the projection $p:\mathcal{C}\to H^2(\reg{W},\Z_G)$, where $H^2(W,\R)$ acts on $\mathcal{C}$ by translation.
\end{rem}

The following two corollaries are more or less immediate.    

\begin{cor}
Let $\psi:W\to \fg^*$ be a stratified unimodular local embedding and suppose that $\dim(G) \neq 3$. Then every symplectic toric manifold over $\psi|_{\reg{W}}$ is isomorphic to the restriction of a symplectic toric stratified space over $\psi$. 
\end{cor}

\begin{cor}
Let $\psi:W\to \fg^*$ be a stratified unimodular local embedding and suppose that $\dim(G)=3$.  Then the space of characteristic classes $\mathcal{C}$ for $\stss_\psi(W)$ is non-canonically isomorphic to $H^2(W,\R)\times H^2(\reg{W},\Z_G)$.
\end{cor}

\begin{rem}\label{r:classificationdistinctions}
 It’s worth comparing this to a previous related result by Burns, Guillemin, and Lerman \cite{BurnsGuilleminLerman}. They proved that compact connected symplectic toric stratified spaces are
classified by rational polytopes in $\fg^*$ that may fail to be simple only at the vertices (an example being the octahedron, as mentioned in Example \ref{ex:octahedron}). Our main result generalizes the result of Burns, Guillemin, and Lerman in a couple of notable ways. First, the compact and connected conditions are unnecessary for the main result (at the price of classification
via moment map image).

A couple of additional technical conditions included by Burns, Guillemin, and Lerman have also been removed: first, we do not assume that the links of our isolated singularities
are not spheres. Notice this allows for symplectic toric stratified spaces that are homeomorphic to topological 4-manifolds and the ``singularity'' in fact indicates a singularity in the
symplectic structure. Additionally, we do not assume that the links of our stratified spaces have convex moment map images. Example \ref{ex:Lshape} demonstrates an example that fails both
of these technical conditions.
\end{rem}

We conclude this section with some illustrative examples.

\begin{ex}\label{ex:Lshape}
Suppose that $\psi$ is the embedding of the following region into $\R^2$
\[\begin{tikzpicture}\fill[gray] (0,0) -- (0,2) -- (1,2) -- (1,1) -- (2,1) -- (2,0) -- (0,0); \draw (0,0) -- (0,2) -- (1,2) -- (1,1) -- (2,1) -- (2,0) -- (0,0);\node at (1,1) {$\bullet$}; \node at (-0.4,-0.2)  {\tiny $(0,0)$}; \node at (1.4,2.2) {\tiny $(1,2)$}; \node at (2.4,1.2){\tiny $(2,1)$}; \end{tikzpicture}\]
where the ``$\bullet$'' is a singular point of the region as a cornered stratified space.  Then our main result tells us that, since this is a contractible region, there is exactly one symplectic
toric stratified space with orbital moment map $\psi$ having one singularity.

Furthermore, it is interesting to note that the link of this symplectic toric stratified space is a three sphere with an overtwisted contact structure. Hence, this symplectic toric stratified space inherits the topological structure of a 4-manifold; however, the singularity is indicating that the associated symplectic structure cannot be extended to this 4-manifold structure.
\end{ex}

\begin{ex}\label{ex:octahedron}
Let $G=\T^3$ and, for $\{v_1,v_2,v_3\}$ a basis of the integral lattice of $\fg^*$, let $\Delta$ be the octahedron in $\fg^*$ that is the convex hull of $\{\pm v_1,\pm v_2,\pm v_3\}$.  Let $\iota:\Delta \to \fg^*$ be the inclusion of $\Delta$ into $\fg^*$.  Then $\iota$ is a stratified unimodular local embedding, where we think of $\Delta$ as a cornered stratified space with its vertices as singularities.  As $\Delta$ is still convex when we remove its vertices, we have that $H^2(\reg{\Delta},\R)=H^2(\reg{\Delta},\Z_G)=0$ and so there is a unique symplectic toric stratified space over $\iota$.

However, if we look instead at $\Delta_0:=\Delta\backslash\{0\}$ with embedding $j:\Delta_0\to \fg^*$, we have that $H^2(\reg{\left(\Delta_0\right)},\Z_G)\cong H^2(S^2,\Z_G)\neq 0$.  Here, each isomorphism class of principal $G$-bundle over $\Delta_0$ (which are in bijection with $H^2(S^2,\Z_G)$) corresponds to a unique collection of classes of symplectic toric stratified space over $j$ which are themselves each in bijective correspondence with the cohomology classes of $H^2(S^2,\R)$.

We may take things one step further: suppose that we add back the origin back in to $\Delta_0$ as a singular point to get the cornered stratified space $\Delta'$ (with singularities at the origin and at each vertex) and with stratified unimodular local embedding $k:\Delta\to \fg^*$ the extension of $j$ above with $k(0)=0\in \fg^*$.  In this case, our main theorem tells us that every principal bundle over $\Delta_0$ will descend to exactly one symplectic toric stratified space over $k$.  In other words, for every principal bundle over $\Delta_0$, there is exactly one class of symplectic toric stratified space over $j:\Delta_0\to \fg^*$ which extends to a symplectic toric stratified space over $k:\Delta'\to \fg^*$.
\end{ex}

\begin{ex}
As a relatively simple example illustrating why an extension is necessary to describe the space $\mathcal{C}$ of allowable classes for symplectic toric stratified spaces with isolated singularities (i.e., rather than just the product $H^2(W,\R)\times H^2(\reg{W},\Z_G)$), take $G=\T^3$ and fix $\eta$ and $\xi$ two distinct elements of $\fg^*$.  Let $W:=\fg^*\backslash \{\eta,\xi\}\coprod \{\eta\}$, i.e., the stratified space with one singularity at $\eta$.  Let $\psi:W\to \fg^*$ be the inclusion.  Then for each principal $G$-bundle $\pi:P\to \reg{W}$, the horizontal classes associated to $P$ are those satisfying the local condition $\langle \eta, [\chern(P)]\rangle$ near $\eta$, yielding a 1-dimensional affine space modeled on $H^2(W,\R)$.  In particular, when $P$ is not flat and when $\eta\neq 0$, this affine space needn't be an honest subspace of $H^2(\reg{W},\Z_G)\times H^2(\reg{W},\R)$.  
\end{ex}

\appendix

\part*{Appendices}

For this paper, we provide two appendices. Appendix \ref{app:stcs} describes the well-known relationship between symplectic toric cones and contact toric manifolds. Appendix \ref{app:stacks} gives a description of stacks (or, more aptly for this paper, strict sheaves of groupoids).

%%%%%%%%%%%%%%%%%%%%%%%%%%%%%%%%%%%%%%%%%
\section{Symplectic cones and contact manifolds}\label{app:stcs}
%%%%%%%%%%%%%%%%%%%%%%%%%%%%%%%%%%%%%%%%%

This appendix gives the definition of and details about symplectic cones and contact toric manifolds used throughout the paper. Sources for this information include \cite{LermanCTM} and \cite{LermanCTM2}.

While we found it most convenient to use Definition \ref{d:sympcone} as our definition for what it meant to be a symplectic cone, an alternative characterization, sometimes also used as a definition, is often quite useful.

\begin{prop} Let $(M,\omega)$ be a symplectic cone and let $\Xi$ be the vector field generating the required action of $\R$ on $M$. Then $\mathcal{L}_\Xi\omega = \omega$.  Conversely, if $\Xi$ is a vector field on $(M,\omega)$ generating a free and proper action of $\R$ and satisfying $\mathcal{L}_\Xi\omega=\omega$, then $(M,\omega)$ is a symplectic cone with respect to the action generated by $\Xi$. $\Xi$ is known as {\em the Liouville} or {\em expanding vector field} of $(M,\omega)$.
\end{prop}

Symplectic cones may be naturally associated with (co-oriented) contact structures on their base. For completeness, we recall the definition of these structures.

\begin{defn}
Let $B$ be a manifold. Then a {\sf contact form} is a 1-form $\alpha$ on $B$ such that $\xi=\ker(\alpha)$ is a codimension 1 distribution on $B$ with the property that $(\xi,d\alpha|_\xi)$ is a symplectic vector bundle over $B$. Say that two contact forms $\alpha$ and $\alpha'$ are in the same {\sf conformal class} of contact forms if there is a function $f\in C^\infty(B)$ such that $e^f\alpha=\alpha'$.

Call a pair $(B,\xi)$ of manifold with codimension 1 distribution $\xi$ a {\sf co-orientable contact manifold} if there exists a contact form $\alpha$ with $\ker(\alpha) = \xi$. Call a co-orientable contact manifold $(B,\xi)$ together with a choice of conformal class a {\sf co-oriented contact manifold}.

A {\sf map of co-oriented contact manifolds} $\varphi:(B,\xi)\to (B',\xi')$ is a smooth map $\varphi:B\to B'$ so that, for $\alpha$ and $\alpha'$ representatives of the conformal class for $(B,\xi)$ and $(B',\xi')$ respectively, $\varphi^*\alpha'$ and $\alpha$ are in the same conformal class. A {\sf contactomorphism} between co-oriented $(B,\xi)$ and $(B',\xi')$ is a diffeomorphism $\varphi:B\to B'$ so that $\varphi$ and $\varphi^{-1}$ are both maps of co-oriented contact manifolds.
\end{defn}

\begin{rem}
The reasoning behind the name ``co-orientable'' is the fact that, for $(B,\xi)$ a co-orientable contact manifold, the line bundle $\xi^o$ (the annihilator of $\xi$ in $T^*B$) is orientable. Thinking of the 1-form $\alpha$ as a section $\alpha:B\to T^*B$, a contact 1-form for $\xi$ functions as a nowhere 0 section trivializing $\xi^o$. It follows that $\xi^o\backslash 0$ (i.e., $\xi^o$ without its 0 section) has two components.

In the case where $(B,\xi)$ is co-oriented, we label by $\xi^o_+$ the component of $\xi^o\backslash 0$ selected by the conformal class (co-)orienting $(B,\xi)$.  This is called {\em the symplectization of $(B,\xi)$} and the restriction of the canonical symplectic form on $T^*B$ is a symplectic form on $\xi^o_+$.  

Furthermore, for any $b\in B$ and $\eta\in T^*_bB$, the action $t\cdot \eta :=e^t\eta$ is free and proper and restricts to a free and proper action on $\xi^o_+$ with quotient map the restriction of the natural projection $T^*B\to B$ to $\xi^o_+$.  With respect to this $\R$ action, $\xi^o_+$ is a symplectic cone.  We can also show that, for any choice of contact form $\alpha$ thought of as a global section $\alpha:B\to \xi^o_+$, the induced isomorphism $\xi^o_+\cong B\times \R$ induces an $\R$-equivariant symplectomorphism $\xi^o_+\cong(B\times \R,d(e^t\alpha))$.
\end{rem}

The relationship between a co-oriented contact manifold and its symplectization can be generalized to any symplectic cone.

\begin{prop}
Let $(\pi:M\to B,\omega)$ be a symplectic cone (for $\pi:M\to B$ the $\R$-quotient). Then $B$ has a natural co-oriented contact structure $\xi$ with respect to which $\xi^o_+\cong (M,\omega)$.  Furthermore, any map of symplectic cones $f:(\pi:M\to B,\omega) \to (\pi':M'\to B',\omega')$ descends to a co-orientation preserving contact map $\bar{f}:(B,\xi)\to(B',\xi')$.
\end{prop}

It is not difficult to show that, if a symplectic cone $(M,\omega)$ is in fact a symplectic toric cone, its quotient inherits the structure of a contact toric manifold.

\begin{defn}
A {\sf contact toric manifold} is a co-oriented contact manifold $(B,\xi)$ with an effective contact action by a torus $G$ such that $2\dim(G) = \dim(B) + 1$.
\end{defn}

\begin{prop}
Let $(M,\omega,\mu)$ be a symplectic toric cone. Then the co-oriented contact manifold $(B,\xi)$ with $B:=M/\R$ and $\xi$ the contact distribution descending from $(M,\omega)$ is a contact toric manifold.
\end{prop}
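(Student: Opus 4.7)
The plan is to verify the three requirements of a contact toric manifold in turn: the existence of a smooth contact $G$-action on $(B, \xi)$, effectiveness of this action, and the dimension constraint $2\dim(G) = \dim(B) + 1$. The dimension constraint is immediate: since $(M, \omega, \mu)$ is a symplectic toric manifold, $\dim(M) = 2\dim(G)$, and since $\pi : M \to B$ is a principal $\R$-bundle, $\dim(B) = \dim(M) - 1 = 2\dim(G) - 1$.

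For the action, I would argue as follows. Because the $G$ and $\R$ actions on $M$ commute, $G$ descends to a continuous (in fact smooth, as $\pi$ is a principal $\R$-bundle) action on $B$. To see that $G$ acts by co-orientation preserving contactomorphisms, recall from the proof of Proposition~\ref{p:conesandcontactmflds} that $\xi$ and its co-orientation are determined at each $b \in B$ by the 1-form $\beta := \iota_\Xi \omega$, where $\Xi$ is the Liouville vector field. Because $G$ preserves $\omega$ and its action commutes with the $\R$-action (so that $G_* \Xi = \Xi$ pointwise in the sense that $g_* \Xi = \Xi$ for every $g \in G$), $G$ preserves $\beta$. The defining formula $\xi_b = d\pi_m(\ker \beta_m)$ and the description of $\xi^o_+$ via positive multiples of $\beta_m$ (equation~\eqref{eq:contactdist}) then yield that each $g \in G$ is a co-orientation preserving contactomorphism.

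The core step is effectiveness, for which I would use the homogeneous moment map $\mu$ in an essential way. Suppose $g \in G$ acts trivially on $B$. Then for every $m \in M$, $g \cdot m$ and $m$ lie in the same $\R$-orbit, so there exists $t(m) \in \R$ with $g \cdot m = \rho_{t(m)}(m)$. Since $G$ is abelian, $\mu$ is $G$-invariant, so
\[
\mu(m) = \mu(g \cdot m) = \mu(\rho_{t(m)}(m)) = e^{t(m)}\mu(m).
\]
By Remark~\ref{r:sstchule} (or directly, the observation that $0 \notin \mu(M)$ for any symplectic toric cone, as used in Proposition~\ref{p:hule}), $\mu(m) \neq 0$, so $e^{t(m)} = 1$ and therefore $t(m) = 0$. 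Hence $g$ fixes every point of $M$, and effectiveness of the $G$-action on $M$ forces $g = e$. The subtlety here — and the main point where the proof uses something beyond the general symplectic-cone-to-contact-manifold correspondence — is precisely this use of homogeneity of $\mu$ together with $0 \notin \mu(M)$; without it, one could have nontrivial elements of $G$ acting trivially on $B$ but via $\R$-translations on $M$.
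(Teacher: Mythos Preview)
The paper does not actually supply a proof of this proposition; it is stated in the appendix as a known fact following the definition of contact toric manifold. Your argument is therefore not competing with a paper proof but filling a gap, and it does so correctly.

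A few small points. Your reference to Remark~\ref{r:sstchule} is off: that remark concerns the value of $\mu$ at the singular point of a singular symplectic toric cone. The fact you need, that $0\notin\mu(M)$ for a symplectic toric cone, is Proposition~\ref{p:hstc}; you essentially note this in the parenthetical. For the contactomorphism step, you could equally well invoke item (3) of Proposition~\ref{p:conesandcontactmflds} directly, since each $g\in G$ is an $\R$-equivariant symplectomorphism and hence a map of symplectic cones; your direct argument via $g^*\beta=\beta$ is of course fine. Finally, your effectiveness argument is the substantive part and is correct: freeness of the $\R$-action gives a well-defined $t(m)$, $G$-invariance of $\mu$ (from abelianity) together with homogeneity forces $t(m)=0$, and effectiveness on $M$ finishes it.
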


It is important for the work above to find a $G$-equivariant trivialization of a symplectic cone with torus action commuting with the action of $\R$. This is always possible for the symplectization of contact toric manifolds, where this task is exactly the same as finding a $G$-invariant contact form.

\begin{prop} 
Let $(B,\xi)$ be a co-oriented contact manifold with an effective action of torus $G$. Then there exists a $G$-invariant contact form $\alpha$ for $(B,\xi)$.
\end{prop}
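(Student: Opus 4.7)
The plan is to construct the desired $G$-invariant contact form by averaging a given contact form over the compact group $G$ using the normalized Haar measure. First, since $(B,\xi)$ is co-oriented, there exists a globally defined contact 1-form $\alpha_0 \in \Omega^1(B)$ with $\ker(\alpha_0)=\xi$ and with $\alpha_0$ representing the chosen conformal class (equivalently, $\alpha_0$ is a nowhere-vanishing section of $\xi^o_+ \to B$). Let $dg$ denote the normalized Haar measure on $G$ and define
\[
\alpha := \int_G g^*\alpha_0 \, dg.
\]

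Next I would verify $G$-invariance of $\alpha$. For any $h \in G$ with action diffeomorphism $\rho_h:B\to B$, the translation-invariance of Haar measure gives
\[
\rho_h^*\alpha = \int_G (g\rho_h)^*\alpha_0 \, dg = \int_G (gh)^*\alpha_0 \, dg = \int_G g^*\alpha_0 \, dg = \alpha.
\]
The key remaining step (and the one where one has to be a little careful) is to show that $\alpha$ is still a contact form for $\xi$ representing the same co-orientation. Since $G$ acts by co-orientation-preserving contactomorphisms, for each $g\in G$ we have $\ker(g^*\alpha_0)=\xi$ and $g^*\alpha_0$ lies in the same conformal class as $\alpha_0$, so there is a unique smooth function $f_g \in C^\infty(B)$ with $g^*\alpha_0 = e^{f_g}\alpha_0$. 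Moreover the map $(g,b)\mapsto f_g(b)$ is smooth in both variables by smoothness of the $G$-action. Consequently
\[
\alpha = \left(\int_G e^{f_g}\, dg\right)\alpha_0 =: h\cdot \alpha_0,
\]
where $h\in C^\infty(B)$ is strictly positive since each $e^{f_g}$ is strictly positive. In particular $\alpha$ is nowhere vanishing with $\ker\alpha=\xi$, and being a positive multiple of $\alpha_0$ it lies in the same conformal class, so it represents the chosen co-orientation; equivalently, $\alpha$ is a section of $\xi^o_+\to B$.

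The main potential obstacle is the verification that $\alpha$ does not degenerate after averaging. This is handled by the observation above that every $g^*\alpha_0$ is a strictly positive conformal rescaling of $\alpha_0$, so the integrand never changes sign and the average stays in the positive cone $\xi^o_+$ fiberwise. Once this is observed, $G$-invariance and the contact condition both follow immediately, completing the proof.
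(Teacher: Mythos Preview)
Your proof is correct and follows precisely the approach the paper indicates: the paper merely states that ``$G$-invariant $\alpha$ in the above proposition is found by averaging any contact form against $G$'' without providing details. You have supplied exactly those details, correctly observing that the crucial point is that each $g^*\alpha_0$ is a \emph{positive} conformal multiple of $\alpha_0$ (since $G$ acts by co-orientation-preserving contactomorphisms), so the average remains a positive multiple of $\alpha_0$ and hence a contact form in the correct conformal class.
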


\begin{rem}\label{r:contmm}
One application of finding a $G$-invariant contact form $\alpha$ on a contact toric manifold $(B,\xi)$ (as utilized in \cite{LermanCTM}) is the existence of a {\em contact moment map}: this is the map $\Psi_\alpha:=\mu\circ \alpha$ for $\mu:\xi^o_+\to \fg^*$ the homogeneous moment map of the symplectization of $(B,\xi)$ and where we think of $\alpha:B\to \xi^o_+\subset T^*B$ as a global section of $\xi^o_+\to B$.  Note that $\Psi_\alpha$ is called the contact map as it satisfies $\langle \Psi_\alpha(p),X\rangle = \alpha_p(X_M(p))$ for all $p\in M$, $X\in \fg^*$.
\end{rem}

$G$-invariant $\alpha$ in the above proposition is found by averaging any contact form against $G$. The proposition below shows how we can generalize this process.

\begin{prop}\label{p:equivtrivialization}
Given a symplectic cone $(M,\omega)$ with $\R$-quotient $\pi:M\to B$ and an effective action by torus $G$ that commutes with the action of $\R$, there exists a trivialization of $M$ as a principal $\R$-bundle $\varphi : M \to  B \times \R$ that is $G$-equivariant (where the contact action of $G$ on $B$ is trivially extended to an action on $B\times \R$).
\end{prop}

This fact is not standard, so here is a proof.

\begin{proof}
Fix a global section $s : B \to M$ of $\pi$ (this is always possible, as principal $\R$-bundles are always trivializable).  Recall for any principal $K$-bundle $\pi:P\to N$  for Lie group $K$, there exists a smooth ``division map'' $d : P \times_N P \to K$, the map uniquely defined by $p\cdot d(p,p')=p'$.

Then define the map $f : B \times G \to \R$ by:
\[f(b,g):=d(s(g\cdot b),g\cdot s(b))\]
This is well-defined as $\pi$ is $G$-equivariant. Essentially, $f$ measures the failure of $s$ to be $G$-equivariant and will be used to properly adjust $s$ into an equivariant section.  $f$ satisfies the following useful property: for $b\in B$, $g,h \in G$, one may show that
\[f(h\cdot b,g)=f(b,gh)-f(b,h)\]
For $d\lambda$ a $G$-invariant measure on $G$ with $\int_Gd\lambda = 1$, define $\bar{f}:B\to \R$ by:
\[\bar{f}(b):= \int_Gf(b,g)d\lambda\]
As $\bar{f}$ is the result of integrating a smooth family of functions on $G$ parameterized by $B$, it is smooth. We also have that $\bar{f}(h\cdot b)=\bar{f}(b)-\bar{f}(b,h)$.

Finally, let $\bar{s}:B\to M$ be the section of $\pi$ with $\bar{s}(b) := s(b) \cdot (-\bar{f}(b))$. It follows from how we’ve defined $\bar{s}$ that it is a $G$-equivariant section of $\pi:M\to B$.  Therefore, it induces a $G$-equivariant trivialization of $M$ as a principal $\R$-bundle.
\end{proof}

Here is another important proposition we use in the paper.

\begin{prop}\label{p:nozeroinhomogmm}
Let $(M,\omega,\mu:M\to \fg^*)$ be a symplectic toric cone with homogeneous moment map $\mu:M\to \fg^*$. Then $0$ is not in the image of $\mu$.
\end{prop}

\begin{proof}
Let $(B,\xi)$ be the co-oriented contact toric manifold with $B = M/\R$ as described above. As shown in Lemma 2.12 of \cite{LermanCTM}, for $\xi^o_+$ the symplectization of $(B,\xi)$, the image of
the homogeneous moment map $\nu:\xi^o_+\to \fg^*$  does not contain 0. Then, since every symplectic toric cone $(M,\omega)$ over $(B,\xi)$ is $(G\times\R)$-equivariantly symplectomorphic to $\xi^o_+$, it follows that the choice of homogeneous moment map $\mu$ for $(M,\omega)$ also does not contain $0$ in its image.
\end{proof}

Following Lerman's approach in \cite{LermanCTM}, we define a notion of the symplectic slice representation for contact toric manifolds.

\begin{defn} 
Let $(B,\xi)$ be a co-oriented contact toric manifold with $G$-invariant contact form $\alpha$. Let $\omega:=(d\alpha)|_{\xi}$. Then for any point $x\in B$, the $\alpha$ {\sf symplectic slice representation at $x$} is the $G_x$-vector space:
\[(V,\omega_V)_\alpha :=\left(\frac{(T_x(G\cdot x)\cap\xi_x)^\omega}{T_x(G\cdot x)\cap \xi_x},\omega|_V\right)\]
Note that another choice of $G$-invariant contact form $\alpha'=e^f\alpha$ for $(B,\xi)$ defines the same vector space $V$ with symplectic form $d(e^f \alpha)$. Thus, the symplectic vector space $(V,\omega_V)_\alpha$ depends on a choice of contact form.
\end{defn}

\begin{rem}
This matches the definition of ``symplectic slice representation'' in Definition 3.8 of \cite{LermanCTM}. We choose to label this with the contact form $\alpha$ defining this symplectic representation to avoid confusion with the standard symplectic slice representation for a symplectic toric manifold $(M,\omega)$: the vector space $W:=(T_x(G\cdot x))^\omega/T_x(G\cdot x)$ with symplectic form $\omega_W:=(\omega_x)|_W$ and restricted action of $G_x$. We will use both in the following lemma.
\end{rem}

\begin{lemma}\label{l:slicereps}
Suppose $(M,\omega,\mu:M\to \fg^*)$ is a symplectic toric cone with $G$-quotient $\pi:M\to B$ to the co-oriented contact toric manifold $(B,\xi)$. Then for each $p\in M$ there is a $G$-invariant contact form $\alpha$ for $(B,\xi)$ such that the $\alpha$ symplectic slice representation $(V,\omega_V)_\alpha$ of $\pi(p)$ in $(B,\xi)$ and the symplectic slice representation $(W,\omega_W)$ of $p$ in $(M,\omega)$ are isomorphic symplectic $G_{\pi(p)}=G_p$ representations.
\end{lemma}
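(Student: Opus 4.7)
The plan is to exploit the cone structure to produce, from a carefully chosen trivialization, an explicit $G_p$-equivariant symplectic isomorphism between $(W,\omega_W)$ and $(V,\omega_V)_\alpha$. First, using Proposition ~\ref{p:equivprop}, I would pick a $G$-equivariant principal $\R$-bundle trivialization $\varphi:M\to B\times\R$; by postcomposing with an $\R$-translation (which is simultaneously $G$-equivariant and $\R$-equivariant, and only rescales the associated contact form by a constant factor), I may arrange $\varphi(p)=(x,0)$ where $x=\pi(p)$. This yields a $G$-invariant contact form $\alpha$ on $B$ with $\varphi^*\omega=d(e^t\alpha)$, so that at the point $(x,0)$ the form reads $\omega_p=dt\wedge\alpha_x+d\alpha_x$ on $T_xB\oplus\R$, and $T_p(G\cdot p)=T_x(G\cdot x)\oplus 0$.

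Next I would exploit homogeneity of $\mu$: on the trivialization $\mu(b,t)=e^t\mu_\alpha(b)$, so $d\mu_p(v,a)=a\,\mu_\alpha(x)+d\mu_{\alpha,x}(v)$ in $\fg^*$. By Proposition ~\ref{p:hstc}, $\mu_\alpha(x)\neq 0$. A short computation using $G$-invariance of $\alpha$ and commutativity of $G$ shows that if $X\in\fg$ and $v_0:=X_B(x)\in T_x(G\cdot x)$, then $d\mu_{\alpha,x}(v_0)=0$; in particular, taking $X$ with $\langle\mu_\alpha(x),X\rangle=1$ gives a vector $v_0\in T_x(G\cdot x)$ with $\alpha(v_0)=1$, producing a splitting $T_xB=\xi_x\oplus\R v_0$ and $T_x(G\cdot x)=(T_x(G\cdot x)\cap\xi_x)\oplus\R v_0$. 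A parallel computation using $L_{X_B}\alpha=0$ gives $d\alpha(X_B(x),v)=-\langle d\mu_{\alpha,x}(v),X\rangle$, which lets me identify
\[
(T_x(G\cdot x)\cap\xi_x)^{\omega}\cap\xi_x=\{v_\xi\in\xi_x\mid d\mu_{\alpha,x}(v_\xi)\in\R\mu_\alpha(x)\}.
\]

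With these identifications in hand, I would define the candidate map $\iota:(T_x(G\cdot x)\cap\xi_x)^{\omega}\cap\xi_x\to\ker d\mu_p$ by $\iota(v_\xi):=(v_\xi,a(v_\xi))$, where $a(v_\xi)\in\R$ is the unique scalar (using $\mu_\alpha(x)\neq 0$) such that $a(v_\xi)\mu_\alpha(x)=-d\mu_{\alpha,x}(v_\xi)$. Composition with the projection $\ker d\mu_p\to W$ gives a map $\hat\iota$ which I would check factors through $V$ (vanishes on $T_x(G\cdot x)\cap\xi_x$, since then $d\mu_{\alpha,x}$ vanishes on the orbit directions and $\alpha=0$ forces $a=0$), is surjective (any $(w,b)\in\ker d\mu_p$ has $w=w_\xi+cv_0$ with $w_\xi$ automatically landing in the required subspace and $c v_0\in T_p(G\cdot p)$ can be killed), and is symplectic (from $\omega_p((v_\xi,a_1),(v_\xi',a_2))=d\alpha_x(v_\xi,v_\xi')$ since $\alpha$ vanishes on $\xi$). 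Equivariance with respect to $G_p=G_x$ follows from the $G$-equivariance of $\varphi$.

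The main obstacle is the bookkeeping around $\ker d\mu_p$: one must verify that the projection-of-contact-vector construction actually lands in $\ker d\mu_p$ and that the ``Reeb-like'' direction $v_0$ (used to split out an algebraic factor tangent to the orbit but transverse to $\xi$) exists canonically and can absorb the residual $\R$-component; everything else is then a straightforward linear algebra check. The nontriviality of $\mu_\alpha(x)$, which is where Proposition ~\ref{p:hstc} enters crucially, is what makes both the splitting and the unique determination of $a(v_\xi)$ possible.
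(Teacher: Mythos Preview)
Your argument is correct and follows essentially the same strategy as the paper: choose a $G$-equivariant trivialization $\varphi:M\to B\times\R$ with $\varphi(p)=(x,0)$, read off the $G$-invariant contact form $\alpha$, and then carry out a pointwise linear-algebra comparison at $(x,0)$, with the nonvanishing $\mu_\alpha(x)\neq 0$ (equivalently $\mu(p)\neq 0$) as the crucial input.

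The only real difference is organizational. The paper works in the language of the Liouville vector field: it decomposes $T_pM=U\oplus\R\,\Xi\oplus\R\,Y_M(p)$ (with $U\cong\xi_x$) and argues that every class in $W=(T_p(G\cdot p))^\omega/T_p(G\cdot p)$ has a representative in $U$, which then visibly lies in $\xi_x$ and represents a class in $V$. You instead phrase everything through the moment map, identifying $(T_p(G\cdot p))^\omega=\ker d\mu_p$ and building the map in the opposite direction by an explicit lift $v_\xi\mapsto(v_\xi,a(v_\xi))$ whose $\partial_t$-component is dictated by the equation $a\,\mu_\alpha(x)=-d\mu_{\alpha,x}(v_\xi)$. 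Your approach makes the role of $\mu_\alpha(x)\neq 0$ a bit more transparent (it is precisely what guarantees existence and uniqueness of $a(v_\xi)$ and of the transverse orbit vector $v_0$), while the paper's decomposition hides this inside the choice of $Y$ with $\omega(\Xi,Y_M)(p)\neq 0$. Both routes yield the same $G_p$-equivariant symplectic identification.
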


\begin{proof}
Fix $p \in M$ and let $b := \pi(p)$. Since the actions of $\R$ and $G$ on $M$ commute, we have that $G_p = G_b$.

Let $\varphi:B\times \R\to M$ be a $G$-equivariant trivialization of $M$ as a principal $\R$-bundle such that $\varphi(b,0)=p$. Then for $\Xi$ the Liouville vector field associated to $(M,\omega)$, define $\alpha:=(\varphi^*(\iota_\Xi\omega))|_{B\times \{0\}}$. We have that
\[d\alpha=d(\varphi^*(\iota_\Xi\omega))|_{B\times\{0\}}=(\varphi^*(d\iota_\Xi\omega))|_{B\times \{0\}}=(\varphi^*\omega)|_{B\times \{0\}}.\]

$d\varphi_{(b,0)}$ descends to a linear map 
\[\bar{\varphi}:(V,\omega_V)_\alpha \to (W,\omega_W),\;\;\; \bar{\varphi}([v]):=[d\varphi_{(b,0)}(v)]. \]
To see $\bar{\varphi}$ is well-defined, note that, since $\varphi$ is equivariant, $d\varphi_{(b,0)}$ restricts to an isomorphism between $T_{(b,0)}(G\cdot (b,0))$ and $T_p(G\cdot p)$.

Next, note that $\bar{\varphi}$ is injective: for any $[v],[v']\in (V,\omega_V)_\alpha$, $[d\varphi_{(b,0)}(v)]=[d\varphi_{(b,0)}(v')]$ if and only if $d\varphi_{(b,0)}(v-v') = X_M(p)$ for some $X\in \fg$.  This implies that $v-v' = X_B(b)$.  By assumption, $v,v'\in \xi_b$, hence $v-v'=X_B(b) \in \xi_b$ as well.  Thus, $[v]=[v']$ in $(V,\omega_V)_\alpha$.

As noted in Remark \ref{r:homogmomentmapformula}, we have for $\mu:M\to \fg^*$ and for all $X\in \fg$ that $\langle \mu,X\rangle = \omega(\Xi,X_M)$.  Since by Proposition \ref{p:nozeroinhomogmm} $\mu(p)\neq 0$, we therefore have that there exists $Y\in \fg$ with $\omega_p(\Xi(p),Y_M(p))\neq 0$.  It follows that $Y_B(b)\notin \xi_b$ and hence $\dim(\xi_b\cap T_b(G\cdot b)) = \dim(G)-\dim(G_b)-1$.  Therefore 
\begin{align*}\dim(V) &= \dim((\xi_b\cap T_b(G\cdot b))^{\omega_V})-\dim(\xi_b\cap T_b(G\cdot b)) = \dim(\xi_b)-2(\xi_b\cap T_b(G\cdot b))\\ &= \dim(M)-2-2(\dim(G)-\dim(G_b)-1) = \dim(M)-2(\dim(G)-\dim(G_b)) \\ &= \dim(W).\end{align*}
Hence, $\bar{\varphi}$ is an isomorphism.  Since $d\varphi_{(b,0)}$ is equivariant and symplectic, it follows that $\bar{\varphi}$ is an equivariant symplectic isomorphism.

\end{proof}

We now quote the following lemma of Lerman.	

\begin{lemma}[Lemma 3.9, \cite{LermanCTM}]\label{l:lermantechcondition}
Let $(B,\xi)$ and $(B',\xi')$ be two co-oriented contact toric manifolds with $G$-invariant contact forms $\ker\alpha=\xi$ and $\ker\alpha'=\xi'$. Suppose $x\in B$ and $x'\in B'$ satisfy
\begin{itemize}
	\item $\Psi_\alpha(x)=\lambda \Psi_{\alpha'}(x')$ for $\Psi_\alpha$, $\Psi'_\alpha$ the contact moment maps for $\alpha$, $\alpha'$ and $\lambda>0$ (see Remark \ref{r:contmm});
	\item $G_x = G_{x'}$ (i.e., the isotropy groups for each point are equal); and
	\item For $(V,\omega)_\alpha$ and $(V',\omega_{V'})_{\alpha'}$ the $\alpha$ and $\alpha'$ symplectic slice representations for $x$ and $x'$, there is an $G_x$-equivariant linear isomorphism $l : V \to V'$ such that $l^*\omega_{V'}=(d(e^g\alpha)_x)|_V$ for some function $g\in C^\infty(B)$.
\end{itemize}
Then there are $G$-invariant open neighborhoods $U$ of $x$ and $U'$ of $x'$ and a $G$-equivariant diffeomorphism $\varphi:U\to U'$ satisfying $\varphi(x)=x'$ and $\varphi^*\alpha'=f\alpha$ for some $f\in C^\infty(U)$.
\end{lemma}

This allows us to prove the following extension of a standard symplectic toric result to symplectic toric cones. 

\begin{prop}\label{p:finalctmtechlemma}
Let $(M,\omega,\mu:M\to \fg^*)$ and $(M',\omega',\mu':M'\to \fg^*)$ be two symplectic toric cones. Suppose there are two points $p\in M$ and $p'\in M'$ so that:
\begin{itemize}
	\item $G_p = G_{p'}$;
	\item The symplectic slice representations $(V,\omega_V)$ and $(V',\omega_{V'})$ at $p$ and $p'$ are isomorphic as symplectic $G_p = G_{p'}$ vector spaces; and
	\item $\mu(p)=\mu'(p')$.
\end{itemize}
Then there exist $(G \times \R)$-invariant neighborhoods $U$ and $U'$ of $p$ and $p'$ respectively and a $(G \times \R)$-equivariant symplectomorphism $f:U\to U'$ with $f(p) = p'$ and $\mu'\circ f = \mu|_U$.
\end{prop}

\begin{proof}
Let $(B,\xi)$ and $(B',\xi')$ be the contact toric bases of $(M,\omega)$ and $(M',\omega')$. Denote the $\R$-quotient maps by $\pi:M\to B$ and $\pi':M'\to B'$ and define $b:=\pi(p)$ and $b' := \pi'(p')$. Then by Lemma \ref{l:slicereps}, there exist $G$-equivariant trivializations $\varphi:B\times \R\to M$ and $\varphi':B'\times \R\to M'$ so that $\varphi(b,0)=p$, $\varphi'(b',0)=p'$, and, for $\varphi^*\omega=d(e^t\alpha)$ and ${\varphi'}^*\omega'=d(e^t\alpha')$, the symplectic slice representations $(V,\omega_V)$ and $(V',\omega_{V'})$ are isomorphic to the $\alpha$ and $\alpha'$ symplectic slice representations $(W,\omega_w)_\alpha$ and $(W',\omega_{W'})_{\alpha'}$ of $b$ and $b'$, respectively.

By Lemma \ref{l:lermantechcondition} there are $G$-invariant neighborhoods $U$ and $U'$ of $b$ and $b'$ and a $G$-equivariant co-orientation preserving contactomorphism $\phi:U\to U'$ with $\phi(b)=b'$ and $\phi^*\alpha'=e^g\alpha$, for some $g\in C^\infty(B)$. The map $\tilde{\phi}:U\times \R\to U'\times \R$, defined by $\tilde{\phi}(b,t):=(\phi(b),t-g(b))$ is $(G\times \R)$-equivariant and satisfies $\tilde{\phi}^*(d(e^t\alpha'))=d(e^t\alpha)$. Hence, $f:=\varphi'\circ \tilde{\phi}\circ \varphi^{-1}$ yields a map of symplectic toric cones between $\pi^{-1}(U)$ and ${\pi'}^{-1}(U')$. Since $\mu|_U$ and $\mu'\circ f$ are both homogeneous moment maps for $\pi^{-1}(U)$, it follows that $\mu|_U = \mu'\circ f$.

Finally, note that
\[f(p)=\varphi'(\tilde{\phi}(\varphi^{-1}(p)))=\varphi'(\tilde{\phi}(b,0))=\varphi'(\phi(b),-g(b))=\varphi'(b',-g(b))\]
Thus, $f(p) = -g(b)\cdot p'$. But then
\[\mu(p)=\mu'(f(p))=\mu'(-g(b)\cdot p')=e^{-g(b)}\mu'(p'),\]
so, since $\mu(p)=\mu'(p')$, we must conclude that $g(b) = 0$ (as the image of $\mu'$ doesn't contain $0$; see Proposition \ref{p:nozeroinhomogmm}) and therefore $f(p) = p'$.
\end{proof}

Finally, we may prove Lemma \ref{l:stclocconnected}; for convenience, we recall its statement:\\

\noindent {\bf Lemma \ref{l:stclocconnected}.} Let $\psi:W\to \fg^*$ be a homogeneous unimodular local embedding. Then any two symplectic toric cones over $\psi$ are locally isomorphic; explicitly, for $(M,\omega,\pi:M\to W)$ and $(M',\omega',\pi':M'\to W)$ two symplectic toric cones over $\psi$, there is an open cover $\{U_\alpha\}_{\alpha\in A}$ of $W$ by $\R$-invariant open subsets and a collection of isomorphisms
\[\{\varphi_\alpha : (M,\omega,\pi)|_{U_\alpha} \to (M', \omega',\pi')|_{U_\alpha} \in \stc_\psi(U_\alpha)\}_{\alpha\in A}.\]

\begin{proof}
We may proceed using the method of Lemma B.4 of \cite{KarshonLerman}. Fix $p\in M$ and $p'\in M'$ with $\pi(p)=\pi'(p')$. Since $\psi\circ\pi$ and $\psi\circ \pi'$ are moment maps for $(M,\omega)$ and $(M',\omega')$, respectively, we may conclude from the local normal form for symplectic toric manifolds that both $p$ and $p'$ have stabilizer $K_{\pi(p)}$ and symplectic slice representations isomorphic to $\C^k$ with symplectic weights $\{v^*_1,\ldots,v^*_k\}$, for $C_{\{v_1,\ldots,v_k\},\psi(\pi(p))}$ the unimodular cone with vertex $\psi(\pi(p))$ uniquely determined by $\psi$ near $\pi(p)$.

Therefore, by applying Proposition \ref{p:finalctmtechlemma}, we have our result. 
\end{proof}

Again following Lerman in \cite{LermanCTM}, orbits in contact toric manifolds have a local normal form (as we are interested only in the structure of the neighborhood as a $G$-manifold, we suppress the additional information from the lemma regarding contact structure and moment map).  This local normal form in turn yields a manifolds with corner structure for the quotient of any contact toric manifold.

\begin{lemma}[Lemma 3.10, \cite{LermanCTM}] 
Let $(L,\xi)$ be a contact toric manifold with $G$-invariant form $\alpha$ and contact moment map $\Psi_\alpha$ (see Remark \ref{r:contmm}). Given point $p\in L$, denote the $\alpha$ symplectic slice representation by $G_p \to \mathrm{Sp}(V,\omega_V)$ and let $\fk := (\R\Psi_\alpha(p))^o$ (the so-called characteristic subalgebra of the embedding $G\cdot p\to (M,\xi)$). Then there exists a $G$-invariant neighborhood of the orbit of $p$ in $L$ that is $G$-equivariantly diffeomorphic to a neighborhood of the $0$ section of the vector bundle $N = G \times_{G_p}((\fg/\fk)^* \oplus V)$.
\end{lemma}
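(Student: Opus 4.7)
The plan is to deduce this contact slice theorem from the equivariant symplectic slice theorem of Marle--Guillemin--Sternberg applied to the symplectization $\xi^o_+$ of $(L,\xi)$, and then to descend along the free $\R$-action on the symplectization.

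First, I will invoke Proposition~\ref{p:equivprop} to produce a $G$-equivariant trivialization $\xi^o_+\cong L\times\R$ in which the symplectic form becomes $d(e^s\alpha)$ and the homogeneous moment map becomes $\mu(x,s)=e^s\Psi_\alpha(x)$. Setting $\tilde p:=(p,0)$ gives $G_{\tilde p}=G_p$ and $\mu(\tilde p)=\Psi_\alpha(p)$, while the $\R$-action is free translation in the $\R$-factor, commutes with $G$, and has $\R$-quotient $(L,\xi)$, as in Proposition~\ref{p:conesandcontactmflds}.

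Next I apply Marle--Guillemin--Sternberg to the Hamiltonian $G$-action on $(\xi^o_+,d(e^s\alpha))$ at $\tilde p$. This yields a $G$-equivariant symplectomorphism from a $G$-invariant neighborhood of $G\cdot\tilde p$ onto a neighborhood of the zero section of
\[
\widetilde N\;=\;G\times_{G_p}(\fg_p^o\oplus W),
\]
where $W$ is the symplectic slice at $\tilde p$, with its linear $G_p$-symplectic structure. Shrinking this neighborhood if necessary (using the properness of the combined $G\times\R$-action), I may arrange it to be $\R$-invariant; the transported $\R$-action on $\widetilde N$ is then a free commuting symplectic cone action.

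Finally, I will identify the $\R$-quotient of a neighborhood of the zero section of $\widetilde N$ with a neighborhood of the zero section of $N=G\times_{G_p}((\fg/\fk)^*\oplus V)$. Using the explicit MGS model moment map $[g,\eta,w]\mapsto \Psi_\alpha(p)+\eta+\mu_W(w)$ together with homogeneity $\mu(t\cdot x)=e^t\mu(x)$, the infinitesimal generator of the $\R$-action on $\widetilde N$ is a cone (Euler-type) vector field whose quotient contracts the $\fg_p^o$-factor along the $\Psi_\alpha(p)$-direction (realizing the 1-dimensional summand $(\fg/\fk)^*=\R\Psi_\alpha(p)$), while Lemma~\ref{l:slicereps} supplies a $G_p$-equivariant symplectic identification of the MGS slice $W$ at $\tilde p$ with the $\alpha$-symplectic slice $V$ at $p$ that is compatible with the cone quotient. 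Combining the MGS identification with the $\R$-quotient then produces the desired $G$-equivariant diffeomorphism from a neighborhood of $G\cdot p$ in $L$ onto a neighborhood of the zero section of $N$.

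The main obstacle in this plan is the final bookkeeping step: carefully tracking the transported $\R$-action inside the MGS model so that its quotient of $\fg_p^o\oplus W$ precisely realizes the splitting $(\fg/\fk)^*\oplus V$ as $G_p$-representations, with the local cone structure on $\widetilde N$ descending to the local contact structure on $N$. This reduces to a direct linear-algebra comparison of the symplectic-cone and co-oriented-contact slice constructions, with the Liouville field $\Xi$ of $\xi^o_+$ serving as the bridge between the two normal forms.
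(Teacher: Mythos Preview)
The paper does not prove this lemma at all: it is stated as a citation of Lemma~3.10 from \cite{conttoric} and is used as a black box to deduce Lemma~\ref{l:ctqamwc}. So there is no ``paper's own proof'' to compare against; your proposal is an independent attempt to supply one.

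Your strategy---lift to the symplectization, apply Marle--Guillemin--Sternberg, then quotient by $\R$---is natural, but the passage ``Shrinking this neighborhood if necessary \ldots\ I may arrange it to be $\R$-invariant'' is a genuine gap. The group $\R$ is non-compact, and the $\R$-orbit through $\tilde p=(p,0)$ is the entire line $\{p\}\times\R$; no bounded neighborhood of $G\cdot\tilde p$ can be $\R$-invariant, and shrinking only makes this worse. Properness of the $G\times\R$-action does not help here: properness lets you separate orbits, not compress non-compact orbits into compact neighborhoods. Conversely, if you enlarge by taking the $\R$-saturation $\R\cdot U$ of the MGS neighborhood $U$, you have no reason to expect the MGS symplectomorphism to extend over $\R\cdot U$, so you cannot transport the $\R$-action to $\widetilde N$ as claimed. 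The subsequent analysis of the ``Euler-type vector field on $\widetilde N$'' therefore has no footing.

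Two ways to repair this. First, since the statement only asks for a $G$-equivariant \emph{diffeomorphism}, you can bypass symplectizations entirely: apply the ordinary equivariant tubular neighborhood theorem for the compact group $G$ acting on $L$ to get $G\times_{G_p}(T_pL/T_p(G\cdot p))$, and then identify the normal $G_p$-representation $T_pL/T_p(G\cdot p)$ with $(\fg/\fk)^*\oplus V$ by a direct linear-algebra computation using the contact form $\alpha$ (the Reeb direction contributes the $(\fg/\fk)^*$ summand, and the hyperplane $\xi_p$ contributes $V$). Second, if you wish to keep the symplectization route, do not try to make the MGS chart $\R$-invariant; instead restrict the MGS diffeomorphism to the hypersurface $L\times\{0\}$, which already sits $G$-equivariantly inside the MGS neighborhood, and read off the induced identification of a neighborhood of $G\cdot p$ in $L$ with a hypersurface in $\widetilde N$ transverse to the Liouville direction---this hypersurface is then identified with a neighborhood of the zero section of $N$. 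Either route avoids the impossible shrinking step.
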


\begin{lemma}\label{l:contquotamwc} 
Let $(B,\xi)$ be a contact toric manifold. Then the quotient $B/G$ is a manifold with corners.
\end{lemma}

\begin{proof}
For each point $p \in B$, the stabilizer $G_p$ is a torus (see Lemma 3.13 of \cite{LermanCTM}). Let $\alpha$ be a $G$-invariant contact form for $(B,\xi)$. From the above lemma, there exists subspace $U\subset \fg^*$ so that, for $\alpha$ symplectic slice representation $G_p\to \mathrm{Sp}(V,\omega_V)$, there is a $G$-invariant neighborhood of $G\cdot p$ in $B$ that is $G$-equivariantly diffeomorphic to a neighborhood of the 0 section of the vector bundle: $N = G \times_{G_p} (U \times  V )$ (where $U$ has trivial $G_p$ action).

So to understand what B/G locally looks like, it is enough to understand $N/G$:
\[N/G = ((G \times U \times V )/G_p)/G = ((G \times U \times V )/G)/G_p = U \times V/G_p\]
Here, we may reverse the quotients as the actions of $G$ and $G_p$ commute. Since $G_p$ is a torus, we can decompose $V$ into weight spaces (as in the appendix of \cite{LermanTolman}) to see $V/G_p$ is diffeomorphic to a sector (i.e., a manifold with corners of the form $[0,\infty)\times \R^l$). Therefore, the above local normal form for $(B,\xi)$ near $p$ descends to a manifolds with corners chart for $[p] \in B/G$ centered at the origin in $U \times V/G_p$.
\end{proof}

%%%%%%%%%%%%%%%%%%%%%%%%%%%%%%%%%%%%%%%%%
\section{Stacks}\label{app:stacks}
%%%%%%%%%%%%%%%%%%%%%%%%%%%%%%%%%%%%%%%%%

In this appendix, we will provide some brief notes on stacks. This will also contain proofs that our presheaves of groupoids $\hstb_\psi$ and $\cstb_\psi$ are stacks as well as the major technical lemma we require to prove that $\hc:\hstb_\psi\to \stc_\psi$ and $\ssc:\cstb_\psi\to \stss_\psi$ are isomorphisms of presheaves of groupoids.

For simplicity's sake, we will be using a less general definition that would perhaps be more accurately named a strict sheaf of groupoids. Since the stacks of this paper are, in fact, (strict!) presheaves of groupoids, we needn't worry about more intricate/subtle definitions (i.e., involving lax presheaves or categories fibered in groupoids).  A few good sources for the complete story on stacks are \cite{Vistoli} (which is focused more on stacks in algebraic geometry), \cite{BehrendXu} (which is focused on using stacks in differential geometry), and \cite{Heinloth} (which discusses stacks over manifolds and over topological spaces).

Fix a topological space $X$. For $\{U_\alpha\}_{\alpha\in A}$ an open cover of $X$, we write $U_{\alpha\beta} := U_\alpha\cap U_\beta$ and $U_{\alpha\beta\gamma} := U_\alpha\cap U_\beta \cap U_\gamma$. First, we need some preliminaries.

\begin{defn}
We denote by $\open(X)$ {\sf the category of open sets of $X$}: the objects of $\open(X)$ are open subsets $U \subset X$ and the morphisms are inclusions $\iota:U\to V$.
\end{defn}

One approach for defining stacks involves so-called {\em descent data}: roughly, the local data we would like to uniquely (up to isomorphism) glue into global data.

\begin{defn}
Let $\{U_\alpha\}_{\alpha\in A}$ be an open cover of $X$ and let $\mathcal{F} : \op{\open(X)} \to \gpoid$ be a presheaf of groupoids. Then a {\sf descent datum} of $\mathcal{F}$ defined with respect to $\{U_\alpha\}_{\alpha\in A}$ is a pair of tuples
\[\left(\{\xi_\alpha\in \mathcal{F}(U_\alpha)\}_{\alpha\in A},\{\varphi_{\alpha\beta}:\xi_\alpha|_{U_{\alpha\beta}} \to \xi_\beta|_{U_{\alpha\beta}}\in \mathcal{F}(U_{\alpha\beta})\}_{\alpha,\beta\in A}\right)\]
such that the morphisms $\{\varphi_{\alpha\beta}\}_{\alpha,\beta\in A}$ (called {\sf transition morphisms}) satisfy {\sf the cocycle condition}: for every non-empty triple intersection $U_{\alpha\beta\gamma}$, we have that $\varphi_{\beta\gamma}|_{U_{\alpha\beta\gamma}} \circ \varphi_{\alpha\beta}|_{U_{\alpha\beta\gamma}} = \varphi_{\alpha\gamma}|_{U_{\alpha\beta\gamma}}$.

A {\sf morphism of descent data}
\[\{\eta_\alpha\}_{\alpha\in A}:\left(\{\xi_\alpha\in \mathcal{F}(U_\alpha)\}_{\alpha\in A},\{\varphi_{\alpha\beta}\}_{\alpha,\beta\in A}\right) \to \left(\{\xi'_\alpha\in \mathcal{F}(U_\alpha)\}_{\alpha\in A},\{\varphi'_{\alpha\beta}\}_{\alpha,\beta\in A}\right)\]
is a collection of morphisms $\{\eta_\alpha:\xi_\alpha\to \xi'_\alpha\in \mathcal{F}(U_\alpha)\}_{\alpha\in A}$ so that the diagram
\begin{equation}\label{eq:generalddmorphism}
\xymatrix{\xi_\alpha|_{U_{\alpha\beta}} \ar[r]^{\eta_\alpha|_{U_{\alpha\beta}}} \ar[d]_{\varphi_{\alpha\beta}} & \xi'_\alpha|_{U_{\alpha\beta}} \ar[d]^{\varphi'_{\alpha\beta}} \\ \xi_\beta|_{U_{\alpha\beta}} \ar[r]_{\eta_\beta|_{U_{\alpha\beta}}} & \xi'_\beta|_{U_{\alpha\beta}} }
\end{equation}
commutes for every $\alpha$ and $\beta$ with $U_{\alpha\beta}$ non-empty.

We denote by $\mathcal{D}_{\mathcal{F}}(\{U_\alpha\})$ {\sf the descent category}: the category of descent data of $\mathcal{F}$ with respect to $\{U_\alpha\}_{\alpha\in A}$ with morphisms of descent data.
\end{defn}

Here is the formal definition of a stack we will be using.

\begin{defn}\label{d:stack}
 Let $\mathcal{F}:\op{\open(X)} \to \gpoid$ be a presheaf of groupoids (i.e., a {\em strict} functor). For an open cover $\{U_{\alpha}\}_{\alpha\in A}$ of $X$, {\sf the restriction functor} $\Phi:\mathcal{F}(X) \to \mathcal{D}_{\mathcal{F}}(\{U_\alpha\})$ is that which takes an object $\xi\in \mathcal{F}(X)$ to the descent datum
\[\left(\left\{\xi|_{U_\alpha}\right\},\left\{\mathrm{id}:\left(\xi|_{U_\alpha}\right)|_{U_{\alpha\beta}} \to \left(\xi|_{U_\beta}\right)|_{U_{\alpha\beta}}\right\}_{\alpha,\beta\in A}\right)\]
and a morphism $\varphi:\xi\to \xi'$ to the morphism of descent data $\{\varphi|_{U_\alpha}:\xi|_{U_\alpha}\to \xi'|_{U_\alpha}\}_{\alpha\in A}$. 

$\mathcal{F}$ is a {\sf stack} if, for every open subset $U$ of $X$ and for every open cover $\{U_\alpha\}_{\alpha\in A}$ of $U$, the restriction morphism $\Phi:\mathcal{F}_U(X) \to \mathcal{D}_{\mathcal{F}_U}(\{U_\alpha\})$ is an equivalence of categories, where $\mathcal{F}_U$ is the natural restriction of the presheaf of groupoids $\mathcal{F}$ to the full subcategory $\open(U)\subset \open(X)$.
\end{defn}

\begin{rem}
For $W$ a manifold with a proper $\R$ action, we may just as easily define a stack over the category $\open_{\R}(W)$, as defined in Definition \ref{d:opensubR}. Indeed, a presheaf of groupoids over this category is again just a functor $\mathcal{F} : \op{\open_\R(W)}\to \gpoid$ and we may replace the open covers of $\open(W)$ as in Definition \ref{d:stack} with open covers of elements of $\open_{\R}(W)$ by $\R$-invariant subsets.
\end{rem}

A half step from presheaves to stacks is the {\em prestack}.  For $\mathcal{F} : \op{\open(X)} \to \gpoid$ a presheaf and $U$ any open subset of $X$, note that, for $V \subset U$ an open subset of $U$, the restriction morphism from $U$ to $V$ is a map of  groupoids. Thus, for any two objects $\xi$ and $\xi'$ in $\mathcal{F}(U)$, we have a map of sets
\[\mathrm{Hom}_{\mathcal{F}(U)}(\xi,\xi')\to \mathrm{Hom}_{\mathcal{F}(V)}(\xi|_V,\xi'|_V)\]
This collection of restrictions defines a presheaf of sets $\mathrm{Hom}(\xi,\xi'):\op{\open(U)} \to \set$.

\begin{defn}\label{d:prestack}
A presheaf of groupoids $\mathcal{F}:\op{\open(X)}\to \gpoid$ is a {\sf prestack} if for every open subset $U \subset X$ and for any$\xi,\xi'\in \mathcal{F}(U)$, $\mathrm{Hom}(\xi,\xi')$ is a sheaf of sets.
\end{defn}

\begin{rem}\label{r:prestacks}
 It is a routine exercise to show that a presheaf of groupoids $\mathcal{F}:\op{\open(X)}\to \gpoid$ is a prestack if and only if for every open subset $U\subset X$ and for any open cover $\{U_\alpha\}_{\alpha\in A}$ of $U$ the restriction functor $\Phi:\mathcal{F}(U)\to \mathcal{D}_{\mathcal{F}_U}(\{U_\alpha\})$ is fully faithful.

Since the presheaves of this paper all consist of spaces with extra structure (bundles/manifolds/stratified spaces with symplectic forms/group actions), all the presheaves of groupoids we consider in this paper are prestacks.
\end{rem}

One of the standard first examples of a stack is the presheaf of principal $G$-bundles for $G$ any Lie group.

\begin{ex}\label{ex:bg}
Let $\BG:\op{\open(X)} \to \gpoid$ be the presheaf of principal $G$-bundles over $X$: for each open $U$, $\BG(U)$ is defined as the groupoid of principal $G$-bundles over $U$ with isomorphisms of principal $G$-bundles covering the identity on $U$.

The proof that BG is a stack comes in two parts. Let $U$ be an open subset of $X$ with open cover $\{U_\alpha\}_{\alpha\in A}$ and let $\Phi:\BG(U)\to \mathcal{D}_{\BG}\left(\{U_\alpha\}_{\alpha\in A}\right)$ be the restriction functor. Let $\pi:P\to U$ and $\pi':P'\to U$ be two principal $G$-bundles. Then $\underline{\sf Hom}(P,P')$ is a sheaf of sets.  Thus, $\Phi$ is fully faithful (see Remark \ref{r:prestacks}).

To show $\Phi$ is essentially surjective, let
\[\left(\{\pi_\alpha:P_{\alpha}\to U_\alpha\}_{\alpha\in A},\{\varphi_{\alpha\beta}:\left(P_\alpha\right)|_{U_{\alpha\beta}}\to \left(P_\beta\right)|_{U_{\alpha\beta}}\}_{\alpha,\beta\in A}\right)\]
be a piece of decent data. Then we may use the standard construction
\[P:=\left.\left(\coprod_{\alpha\in A} P_\alpha\right)\right/\sim,\;\;\;p\sim q \text{ if }p\in P_\alpha|_{U_{\alpha\beta}}\text{, }q\in P_{\beta}|_{U_{\alpha\beta}}\text{, and }\varphi_{\alpha\beta}(p)=q\]
to build a principal bundle $P$ that restricts to the given descent datum.
\end{ex}

\begin{prop}\label{p:hstbastack}
Let $\psi:W\to \fg^*$  be a homogeneous unimodular local embedding. Then $\hstb_\psi:\op{\open_\R(W)}\to \gpoid$ is a stack.
\end{prop}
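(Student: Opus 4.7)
The plan is to mimic the proof that $\BG$ is a stack (Example~\ref{ex:BG}), with the extra data (symplectic form, $\R$-action) glued using the fact that these structures are preserved by morphisms in $\hstb_\psi$ and that their defining conditions are local. Fix an $\R$-invariant open subset $U\subset W$ and an open cover $\{U_\alpha\}_{\alpha\in A}$ of $U$ by $\R$-invariant open subsets; we must show the restriction functor $\Phi:\hstb_\psi(U)\to\mathcal{D}_{\hstb_\psi}(\{U_\alpha\})$ is an equivalence of groupoids.

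First I would verify that $\hstb_\psi$ is a prestack, i.e.\ that $\Phi$ is fully faithful. As noted in Remark~\ref{r:prestack}, this amounts to checking that for any two homogeneous symplectic toric bundles $(P,\omega)$ and $(P',\omega')$ in $\hstb_\psi(U)$, the presheaf $\underline{\sf Hom}((P,\omega),(P',\omega'))$ on $\open_\R(U)$ is a sheaf of sets. A morphism in this groupoid is a $(G\times\R)$-equivariant gauge transformation intertwining the symplectic forms; all these conditions (being smooth, $G$-equivariant, $\R$-equivariant, covering the identity, symplectic) are local on $U$. Hence a coherent family of local morphisms on an $\R$-invariant cover glues to a unique global morphism, exactly as in Example~\ref{ex:BG}.

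Next I would establish essential surjectivity of $\Phi$. Given descent data $(\{(\pi_\alpha:P_\alpha\to U_\alpha,\omega_\alpha)\}_{\alpha\in A},\{\varphi_{\alpha\beta}\}_{\alpha,\beta\in A})$, the underlying principal $G$-bundles $\{P_\alpha\}$ together with the transition data (which are in particular isomorphisms of principal $G$-bundles) can be glued by the argument of Example~\ref{ex:BG} to a single principal $G$-bundle $\pi:P\to U$, with canonical isomorphisms $P|_{U_\alpha}\cong P_\alpha$ compatible with the $\varphi_{\alpha\beta}$. Since each $\varphi_{\alpha\beta}$ is $G$-equivariant, symplectic, and $\R$-equivariant, the forms $\omega_\alpha$ agree under the identifications and therefore glue to a unique $G$-invariant $2$-form $\omega$ on $P$; likewise the $\R$-actions on the $P_\alpha$ glue to a free $\R$-action on $P$ commuting with the $G$-action. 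Symplecticity, closedness, the identity $\rho_\lambda^*\omega=e^\lambda\omega$, the identity $\iota_{X_P}\omega=-d\langle\psi\circ\pi,X\rangle$ for $X\in\fg$, and the $\R$-equivariance of $\pi$ are all local and hold on each $U_\alpha$, hence hold globally.

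The one non-local ingredient is properness of the $\R$-action on $P$, but this is precisely where Proposition~\ref{p:propraction} steps in: since $\psi|_U:U\to\fg^*$ is a homogeneous unimodular local embedding and $\pi$ is $\R$-equivariant, the hypotheses of that proposition are met and properness is automatic. Combined with Lemma~\ref{l:piequiv}, this shows $(\pi:P\to U,\omega)$ is a homogeneous symplectic toric bundle over $\psi|_U$, and by construction $\Phi(P,\omega)$ is isomorphic to the given descent datum. The main obstacle, such as it is, is keeping track of the three compatible structures simultaneously and invoking Proposition~\ref{p:propraction} at the end to upgrade freeness to properness; the rest of the argument is a routine stack-theoretic gluing essentially identical to the principal-bundle case.
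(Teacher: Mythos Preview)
Your proposal is correct and follows essentially the same approach as the paper: verify the prestack condition by noting that all defining properties of morphisms are local, then glue the underlying principal $G$-bundles, symplectic forms, and $\R$-actions from the descent data, checking that the defining identities pass to the glued object. If anything, you are more careful than the paper about properness of the glued $\R$-action: the paper simply asserts it, while you correctly invoke Proposition~\ref{p:propraction} to secure it.
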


\begin{proof}
Recall $\hstb_\psi$ is a presheaf of groupoids over the $\R$-invariant subsets of $W$ with an $\R$-invariant open subset $U\subset W$ corresponding to the groupoid $\hstb_{\psi|_U}(U)$.  Fix an open cover of $U$ by $\R$-invariant subsets $\{U_\alpha\}_{\alpha\in A}$ and again denote by $\Phi:\hstb_\psi(U)\to \mathcal{D}_{(\hstb_\psi)|_{U}}(\{U_\alpha\})$ the restriction functor.  

That $\Phi$ is fully faithful follows from the fact that maps between any two homogeneous symplectic toric bundles $(\pi:P\to U,\omega)$ and $(\pi':P'\to U,\omega')$ in $\hstb_\psi(U)$ are $(G \times \R)$-equivariant symplectomorphisms; hence, coherent maps collections of maps on any open $\R$-invariant cover of $U$ will uniquely glue.

To see that $\Phi$ is essentially surjective, let
\begin{equation}\label{eq:hstbdescdata}\left(\{(\pi_\alpha:P_\alpha\to U_\alpha,\omega_\alpha\}_{\alpha\in A},\{\varphi_{\alpha\beta}:(P_\alpha,\omega_\alpha)|_{U_{\alpha\beta}}\to (P_\beta,\omega_\beta)|_{U_{\alpha\beta}}\}_{\alpha,\beta\in A}\right)\end{equation}
be a descent datum. Let $\pi:P\to U$ be the bundle built from this data as in Example \ref{ex:bg}.  Since the transition maps $\varphi_{\alpha\beta}$ are $\R$-equivariant, it follows that the actions of $\R$ on each $P_\alpha$ patch together to give a free action on $P$.

To see that this action of $\R$ on $P$ is proper, note that, since the action of $\R$ on $P$ commutes with the action of $G$ on $P$, this $\R$ action descends to an $\R$ action on $W$. By Lemma \ref{l:uniqueraction}, this $\R$ action matches the free and proper action on $W$ with respect to which $\psi$ is equivariant.  Therefore, by Lemma \ref{l:descendingproperactions}, the action of $\R$ on $P$ must have been proper.

As the transition maps $\varphi_{\alpha\beta}$ must also be symplectomorphisms, the symplectic forms from each piece must patch together. Finally, since the condition $\rho_{\lambda}^*\omega = e^\lambda \omega$ for $\rho_\lambda:P\to P$ the action diffeomorphism for real $\lambda$ is local, it follows that, since each $\omega_\alpha$ satisfies this property, the global symplectic form $\omega$ they patch to must satisfy this property as well. So any descent datum patches together to an element $(\pi:P\to U, \omega)$ with $\Phi(\pi:P\to U,\omega)$ isomorphic to descent datum \eqref{eq:hstbdescdata}.

Thus, $\hstb_\psi$ is a stack.
\end{proof}

\begin{prop}\label{p:cstbastack}
Let $\psi:W\to \fg^*$ be a stratified unimodular local embedding. Then $\cstb_\psi:\op{\open(W)}\to \gpoid$ is a stack.
\end{prop}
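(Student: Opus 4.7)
The plan is to model the proof on the one given for Proposition \ref{p:hstbastack}, with the extra subtlety that the defining condition of a conical symplectic toric bundle is \emph{local} near the singularities of $W$, so gluing preserves it. Fix an open subset $U\subset W$ with open cover $\{U_\alpha\}_{\alpha\in A}$, and let $\Phi:\cstb_\psi(U)\to \mathcal{D}_{\cstb_\psi}(\{U_\alpha\})$ be the restriction functor. I need to show $\Phi$ is an equivalence of categories.

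First I would verify $\Phi$ is fully faithful. Morphisms in $\cstb_\psi(U)$ are $G$-equivariant symplectomorphisms of principal $G$-bundles covering the identity on $\reg{U}$, and since each of these conditions (being $G$-equivariant, being a symplectomorphism, being a bundle map) is local, any family of morphisms between two conical symplectic toric bundles that agrees on pairwise intersections glues uniquely to a global morphism. Hence $\underline{\sf Hom}((P,\omega),(P',\omega'))$ is a sheaf of sets, so by Remark \ref{r:prestack}, $\Phi$ is fully faithful.

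Next I would show $\Phi$ is essentially surjective. Given descent data
\[(\{(\pi_\alpha:P_\alpha\to\reg{U_\alpha},\omega_\alpha)\}_{\alpha\in A},\{\varphi_{\alpha\beta}\}_{\alpha,\beta\in A}),\]
first restrict everything to the open dense regular part to obtain descent data in $\stb_{\psi|_{\reg{W}}}$. Since the presheaf of symplectic toric bundles is a stack (this follows from the same kind of patching argument as for principal bundles in Example \ref{ex:BG}, where symplectic forms glue because transition morphisms are symplectomorphisms), we obtain a symplectic toric bundle $(\pi:P\to\reg{U},\omega)$ whose restrictions to $\reg{U_\alpha}$ are isomorphic to $(P_\alpha,\omega_\alpha)$ via transition morphisms compatible with the $\varphi_{\alpha\beta}$.

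The main step, and the one replacing the $\R$-action bookkeeping of Proposition \ref{p:hstbastack}, is to check that $(\pi:P\to\reg{U},\omega)$ satisfies the conical condition of Definition \ref{d:cstb} at every singularity $w\in U$. This is where locality saves us: pick any $\alpha$ with $w\in U_\alpha$. Since $(P_\alpha,\omega_\alpha)\in\cstb_\psi(U_\alpha)$, there exist a neighborhood $V$ of $w$ in $U_\alpha$, a homogeneous local trivialization datum $\varphi:V\to c(L)$ with homogeneous unimodular local embedding $\phi:L\times\R\to \fg^*$, a homogeneous symplectic toric bundle $(\varpi:Q\to L\times\R,\eta)$, and a $G$-equivariant symplectomorphism $\tilde{\varphi}:(P_\alpha,\omega_\alpha)|_{\reg{V}}\to (Q,\eta)|_{\varphi(\reg{V})}$ covering $\varphi$. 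Composing $\tilde{\varphi}$ with the isomorphism $(P,\omega)|_{\reg{V}}\cong (P_\alpha,\omega_\alpha)|_{\reg{V}}$ furnished by the gluing construction produces the required local conical model for $(P,\omega)$ at $w$. Thus $(P,\omega)\in\cstb_\psi(U)$, and by design $\Phi(P,\omega)$ is isomorphic to the original descent data, so $\Phi$ is essentially surjective. The only potential obstacle is bookkeeping to ensure the gluing isomorphisms respect the conical structure, but since we only invoke the condition on a single patch $V\subset U_\alpha$, no compatibility issues between different $\alpha$ arise.
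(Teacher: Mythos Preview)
Your proposal is correct and follows essentially the same approach as the paper: both verify the prestack condition by noting that being a $G$-equivariant symplectomorphism of bundles is a local condition, then glue descent data by first building the underlying symplectic toric bundle (you invoke that $\stb_{\psi|_{\reg{W}}}$ is a stack, the paper does the $\BG$ construction and patches the forms directly) and finally check the conical condition at each singularity using a single $U_\alpha$ containing it. The only difference is packaging, not substance.
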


\begin{proof}
Fix an open subset $U$ in $W$ with open cover $\{U_\alpha\}_{\alpha\in A}$. We must show $\Phi:\cstb_\psi(U) \to \mathcal{D}_{(\cstb_\psi)|_U}(\{U_\alpha\})$ is an equivalence of categories.

As we saw in the case of homogeneous symplectic toric bundles, we can see that $\Phi$ is fully faithful simply by noticing that the maps of conical symplectic toric bundles are bundle maps that preserve local data.

To see that $\Phi$ is essentially surjective, one only needs to check that, since each piece of a descent datum must satisfy the local conditions required of elements of $\cstb_\psi$, then the bundle resulting from applying the gluing construction for bundles as seen in Example \ref{ex:bg} together with the glued symplectic form as seen in Proposition \ref{p:hstbastack} must also satisfy this local condition. The details are left to the reader. 
\end{proof}

We will also be interested in a special class of presheaves of groupoids known as transitive presheaves.

\begin{defn}
A presheaf of groupoids $\mathcal{F} : \op{\open(X)}\to \gpoid$ is called {\sf transitive} if for every open subset $U\subset X$, any two objects $\xi$ and $\xi'$ in $\mathcal{F}(U)$ are locally isomorphic; that is, there exists a cover $\{U_\alpha\}_{\alpha\in A}$ such that the restrictions $\xi|_{U_\alpha}$ and $\xi'|_{U_\alpha}$ are isomorphic in $\mathcal{F}(U_\alpha)$ for each $\alpha$.
\end{defn}

The payoff for working with stacks in our case will be the following technical lemma. This is a generalized version of the proof presented in \cite{KarshonLerman} that, for $\psi:W\to \fg^*$ a unimodular local embedding, the functor $c_U:\stb_\psi(U)\to \stm_\psi(U)$ is essentially surjective on each open subset $U\subset W$.

\begin{lemma}\label{l:isoofstackstechlemma}
Let $X$ be a topological space. Suppose $\mathcal{F} : \op{\open(X)}\to \gpoid$ is a stack, $\mathcal{G}:\op{\open(X)} \to \gpoid$ is a prestack, and $\Psi:\mathcal{F}\to \mathcal{G}$ is a map of presheaves of groupoids. If for each open set $U\subset X$
\begin{enumerate}
	\item $\Psi_U:\mathcal{F}(U)\to \mathcal{G}(U)$ is fully faithful; and
	\item for each $x\in U$ and each $\xi \in \mathcal{G}(U)$, there is an open subset $V\subset U$ and an element $\eta\in \mathcal{F}(V)$ such that $\Psi(\eta)$ is isomorphic to $\xi|_V$,
\end{enumerate}
then $\Psi$ is an isomorphism of presheaves. In particular, $\mathcal{G}$ is a stack.
\end{lemma}

\begin{rem}\label{r:isoofstackstechlemmarem}
Note that this lemma may also be applied to the map of presheaves $\hc:\hstb_\psi\to \stc_\psi$ over $\open_{\R}(W)$, assuming we use open $\R$-invariant subsets and covers by open $\R$-invariant subsets of $W$.

Additionally, note that if $\mathcal{G}$ is a transitive prestack and $\mathcal{F}(U)$ is non-empty for each $U \neq \emptyset$, then any map of presheaves automatically satisfies condition (2) of the above lemma. However, we require the slightly more nuanced condition (2) of Lemma \ref{l:isoofstackstechlemma} to apply to the case of $\ssc:\cstb_\psi \to \stss_\psi$, where $\stss_\psi$ in general need not be a transitive prestack.
\end{rem}

\begin{proof}[Proof of Lemma \ref{l:isoofstackstechlemma}]
Fix an open subset $U$ of $X$. To show $\Psi$ is an isomorphism of presheaves, it is enough to show that $\Psi_U:\mathcal{F}(U)\to \mathcal{G}(U)$ is an equivalence of groupoids for each $U$. By hypothesis, we have already that $\Psi_U:\mathcal{F}(U)\to \mathcal{G}(U)$ is fully faithful, so it remains to show that it is essentially surjective.

Fix an element $\xi\in \mathcal{G}(U)$.  Then by hypothesis there is an open cover $\{U_\alpha\}_{\alpha\in A}$ of $U$, elements $\{\eta_\alpha\in \mathcal{F}(U_\alpha)\}_{\alpha\in A}$, and a family of isomorphisms $\{\varphi_\alpha :\Psi(\eta_\alpha)\to \xi|_{U_\alpha}\}_{\alpha\in A}$.  Since $\Psi_{U_{\alpha\beta}}$ is full for every $U_{\alpha\beta}$ and $\Psi(\eta_\alpha)|_{U_{\alpha\beta}}=\Psi(\eta_\alpha|_{U_{\alpha\beta}})$, there exist morphisms $\phi_{\alpha\beta}:\eta_\alpha|_{U_\alpha\beta}\to \eta_\beta|_{U_{\alpha\beta}}$ such that $\Psi(\phi_{\alpha\beta})=\varphi_\beta^{-1}\circ \varphi_\alpha$.

As $\Psi_{U_{\alpha\beta\gamma}}$ is faithful, it follows that $\phi_{\beta\gamma}|_{U_{\alpha\beta\gamma}}\circ\phi_{\alpha\beta}|_{U_{\alpha\beta\gamma}}=\phi_{\alpha\gamma}|_{U_{\alpha\beta\gamma}}$ for any $\alpha$, $\beta$, $\gamma$ with $U_{\alpha\beta\gamma}\neq \emptyset$.  Thus, the family of isomorphisms $\{\phi_{\alpha\beta}\}_{\alpha,\beta\in A}$ satisfies the cocycle condition and $\{\{\eta_\alpha\}_{\alpha\in A}, \{\phi_{\alpha\beta}\}_{\alpha,\beta\in A}\}$ is a descent datum for $\mathcal{F}_U$ with respect to the cover $\{U_\alpha\}_{\alpha\in A}$.  As $\mathcal{F}$ is a stack, there is an $\eta \in \mathcal{F}(U)$ and an isomorphism of descent data $\{\rho_\alpha\}_{\alpha\in A}:\Phi(\eta)\to \{\{\eta_\alpha\}_{\alpha\in A}, \{\phi_{\alpha\beta}\}_{\alpha,\beta\in A}\}.$

For each $\alpha$, let $f_\alpha :\Psi(\eta)|_{U_\alpha}\to \eta|_{U_\alpha}$ be the composition $f_\alpha := \varphi_\alpha \circ \Psi(\rho_\alpha)|_{U_\alpha}$.  Notice that the diagram in $\mathcal{G}(U_{\alpha\beta})$
\[\xymatrix@C=15ex@R=8ex{\Psi(\eta)|_{U_{\alpha\beta}} \ar[r]^{\Psi(\rho_\alpha)|_{U_{\alpha\beta}}} \ar@{=}[d] & \Psi(\eta_\alpha)|_{U_{\alpha\beta}} \ar[d]_{\Psi(\phi_{\alpha\beta})} \ar[r]^{\varphi_\alpha|_{U_{\alpha\beta}}} & \xi|_{U_{\alpha\beta}} \ar@{=}[d] \\ \Psi(\eta)|_{U_{\alpha\beta}} \ar[r]_{\Psi(\rho_\beta)|_{U_{\alpha\beta}}} & \Psi(\eta_\beta)|_{U_{\alpha\beta}} \ar[r]_{\varphi_\beta|_{U_{\alpha\beta}}} & \xi|_{U_{\alpha\beta}} }\]
commutes; the left square is exactly the image under $\Psi$ of the diagram \eqref{eq:generalddmorphism} corresponding to the isomorphism of descent data $\{\rho_\alpha\}_{\alpha\in A}$ while the right hand side commutes by definition of $\phi_{\alpha\beta}$.  Therefore, since $\mathcal{G}$ is a prestack, $\mathrm{Hom}(\Psi(\eta),\xi)$ is a sheaf and thus the $\{f_\alpha\}_{\alpha\in A}$ glue to an isomorphism $f:\Psi(\eta)\to \xi$.  Hence, $\Psi_U:\mathcal{F}\to \mathcal{G}$ is essentially surjective.
\end{proof}

\end{document}